\documentclass{amsart}

\usepackage{amsmath, amssymb, amsthm, amsfonts}
\usepackage{adjustbox}
\usepackage{orcidlink}
\usepackage{mathtools}
\usepackage{geometry}
\usepackage{hyperref}
\definecolor{vegasgold}{rgb}{0.77, 0.7, 0.35}
\definecolor{darkgoldenrod}{rgb}{0.72, 0.53, 0.04}
\definecolor{gold(metallic)}{rgb}{0.83, 0.69, 0.22}
\hypersetup{
 colorlinks=true,
 linkcolor=darkgoldenrod,
 filecolor=brown,      
 urlcolor=gold(metallic),
 citecolor=darkgoldenrod,
 }

\usepackage{graphicx}
\usepackage{tikz-cd}
\usepackage{enumitem}

\newcommand{\Q}{\mathbb{Q}}

\newcommand{\F}{\mathbb{F}}
\newcommand{\op}[1]{\operatorname{#1}}
\newcommand{\R}{\mathbb{R}}
\newcommand{\Z}{\mathbb{Z}}
\newcommand{\GL}{\mathrm{GL}}
 \newcommand{\AR}[1]{\textcolor{blue}{#1}}

\theoremstyle{plain}
\newtheorem{theorem}{Theorem}[section]
\newtheorem{lemma}[theorem]{Lemma}
\newtheorem{proposition}[theorem]{Proposition}
\newtheorem{corollary}[theorem]{Corollary}

\theoremstyle{definition}
\newtheorem{definition}[theorem]{Definition}

\theoremstyle{remark}
\newtheorem{remark}[theorem]{Remark}

\numberwithin{equation}{section}

\title[Shapes of multi-quadratic number fields]
{On the distribution of shapes of totally real multiquadratic number fields}

\author[A.~Jakhar]{Anuj Jakhar\, \orcidlink{0009-0007-5951-2261}}
\address{Indian Institute of Technology Madras, Chennai, India}
\email{anujjakhar@iitm.ac.in}

\author[A.~Ray]{Anwesh Ray\, \orcidlink{0000-0001-6946-1559}}
\address{Chennai Mathematical Institute, Chennai, India}
\email{anwesh@cmi.ac.in}

\keywords{shapes of number fields, equidistribution, arithmetic statistics}
\subjclass[2020]{11R29, 11R45, 11R56}

\begin{document}

\maketitle

\begin{abstract}
The \emph{shape} of a number field $K$ of degree $m$ is defined as the
equivalence class of the lattice of integers under linear operations
generated by rotations, reflections, and positive scalar dilations.
It may be viewed as a point in the space of shapes
\[
\mathcal{S}_{m-1}
=
\GL_{m-1}(\Z)\backslash \GL_{m-1}(\R)/\op{GO}_{m-1}(\R).
\] The double quotient space is equipped with a  natural measure $\mu$ which is induced from the Haar measure on $\GL_{m-1}(\R)$. We study the distribution of shapes of totally real multiquadratic number fields of degree $m:=2^n$ in which $2$ is unramified. We show that the distribution is governed by the restriction of $\mu$ to a certain torus orbit in $\mathcal{S}_{m-1}$. Our result resolves a conjecture of Haidar.
\end{abstract}

\section{Introduction}

\subsection{Motivation and historical context} 
A classical approach to studying the arithmetic of a number field is to translate its algebraic properties into geometric ones. If we consider a number field $K$ of degree $n = [K:\mathbb{Q}]$, a standard tool for this translation is the Minkowski embedding: $$J: K \hookrightarrow K_{\mathbb{R}} := K \otimes_{\mathbb{Q}} \mathbb{R} \cong \mathbb{R}^n.$$ This embedding realizes $\mathcal{O}_K$ as a full-rank Euclidean lattice $\Lambda_K = J(\mathcal{O}_K)$ inside $K_{\mathbb{R}}$. Much of classical algebraic number theory relies on a single invariant to understand this lattice: the absolute discriminant $\Delta_K$. Since the square root of the discriminant computes the covolume of $\Lambda_K$, it effectively measures the ``size'' of the ring. Yet, a single volume measurement completely ignores the angles between generators of the lattice. Two fields can have identical discriminants but  different lattice structures, with different angles and varying lengths of basis vectors. To capture these exact geometric proportions, we turn to a much sharper invariant known as the {\it shape} of the number field.

To define the shape rigorously, we view $K_{\mathbb{R}}$ as a Euclidean space equipped with the inner product induced by the algebraic trace form. Since $\Lambda_K$ always contains the distinguished vector $J(1) = (1, \dots, 1)$, this introduces a rigid linear constraint common to all number fields. It is therefore standard to project the lattice orthogonally onto the trace-zero hyperplane. The shape of $K$ is defined as the equivalence class of this projected lattice of rank $n-1$, considered modulo the group $\op{GO}_{n-1}(\mathbb{R})$ of orthogonal similitudes (the group generated by rotations, reflections, and uniform scalar dilations). The space of all such shapes is identified with the double coset space
\[
    \mathcal{S}_{n-1} := \op{GL}_{n-1}(\mathbb{Z}) \backslash \op{GL}_{n-1}(\mathbb{R}) / \op{GO}_{n-1}(\mathbb{R}),
\]
which comes equipped with a natural measure $\mu$ derived from the Haar measure on $\op{GL}_{n-1}(\mathbb{R})$.

A central pursuit in arithmetic statistics is understanding how these shapes are distributed as $K$ varies over a family of number fields of a fixed degree, ordered by absolute discriminant. For generic families, namely, fields whose Galois closure has Galois group isomorphic to $S_n$, it is expected that the shapes equidistribute across the entire space $\mathcal{S}_{n-1}$ with respect to the measure $\mu$. This principle of generic equidistribution has been beautifully verified in low degrees. Terr \cite{Terr97} established that the shapes of cubic fields are equidistributed in the modular surface $\mathcal{S}_2$. Subsequently, Bhargava and Harron \cite{BH16} extended this phenomenon to $S_4$-quartic and $S_5$-quintic fields, utilizing parameterizations of low-degree rings (cf. \cite{HighercompositionIII, HighercompositionIV}).

\par These methods do not apply to non-generic families of number fields, i.e., when the Galois group of the Galois closure is a proper subgroup of $S_n$. In such  families, underlying arithmetic constraints force the shapes to collapse into lower-dimensional subvarieties of $\mathcal{S}_{n-1}$. The first major breakthrough in this direction was Harron's study of pure cubic fields \cite{Har17}. He proved that their shapes do not fill the space $\mathcal{S}_2$, but instead restrict strictly to two specific one-dimensional geodesics, governed entirely by a tame-wild ramification dichotomy at the prime $3$. Holmes \cite{Holmes} later generalized this phenomenon to pure prime-degree fields $\mathbb{Q}(\sqrt[p]{m})$, proving that their shapes lie on affine subspaces of dimension $(p-1)/2$. Similar results have been proven for cyclic prime-degree fields \cite{MSM16}, Galois quartic fields \cite{harronharrongaloisquartic}, and complex cubic fields with a fixed quadratic resolvent \cite{HarronANT}.

Most recently, investigations into non-generic fields of composite degree have revealed even richer geometric structures. Studies on pure quartic fields \cite{purequartic}, pure sextic fields \cite{JKRR26}, and octic Kummer extensions \cite{Octic} have demonstrated that the space of shapes for these families decomposes into finite unions of translated torus orbits.
\par In this paper we study the non-generic family of totally real
multiquadratic number fields of degree $2^n$.  Such a field can be
written
\[
K_n=\Q(\sqrt{D_{2^0}},\dots,\sqrt{D_{2^{n-1}}}),
\]
where the $D_{2^k}$ are distinct squarefree positive integers satisfying
natural non-degeneracy conditions that ensure $[K_n:\Q]=2^n$.
$K_n$.  
The Galois group of $K_n/\Q$ is the elementary abelian
$2$–group $\F_2^n$, and the field contains exactly
$\ell=2^n-1$ quadratic subfields.

The distribution of shapes in multiquadratic families has only recently
begun to be understood.  In the biquadratic case this problem was first
studied by Harron and Harron~\cite{PR20}, who described the space of possible shapes of biquadratic fields.  Subsequently, Haidar
\cite{Hassan} investigated the triquadratic case and formulated a
precise equidistribution conjecture for totally real multiquadratic
fields in which the prime $2$ is unramified. In particular, he conjectured that the shapes of such fields,
when ordered by discriminant, become equidistributed inside the
subspace of the full shape space that they occupy.
\par While we explicitly determine the shapes for all totally real multiquadratic fields regardless of their ramification at $2$, the main density result of this paper focuses on the unramified case, proving Haidar's Conjecture~1.1 for all $n$. Our approach combines an arithmetic parametrization
of multiquadratic fields with analytic counting methods. A key input is
the classification of multiquadratic integral bases due to Chatelain
(\cite{Chatelain}, \cite[Theorem~2.1]{Man}).  
\subsection{Main Results}

Let $n\ge 2$ and set $\ell=2^{n}-1$.  We consider totally real
multiquadratic extensions
\[
K_n=\Q(\sqrt{a_1},\dots,\sqrt{a_n})
\]
of degree $2^n$.  Associated to such a field are $\ell$ quadratic
subfields whose radicands may be arranged into a tuple
$(D_1,\dots,D_\ell)$ obtained by taking squarefree parts of products of
the generators $a_i$.  As in \cite{Hassan}, the shapes of such fields
can be described by $\ell-1$ real parameters $\lambda_2,\dots,\lambda_\ell$ which determine the similarity class of the projected Minkowski
lattice.  We denote by $\mathcal{S}_{K_n}\subset\mathcal{S}_\ell$ the
subset of the space of shapes consisting of those arising from totally
real multiquadratic extensions.

Let $R_2,\dots,R_\ell$ be fixed positive real numbers satisfying
\[
1\le R_2\le R_3\le\dots\le R_\ell .
\]
Following \cite{Hassan}, we define the region
\[
W(R_2,\dots,R_\ell)
=
\left\{
\operatorname{sh}(\lambda_2,\dots,\lambda_\ell)\in\mathcal{S}_{K_n}
:
\lambda_2<\dots<\lambda_\ell\le R_\ell,\;
R_j\le\lambda_j,\quad 2\le j\le\ell-1
\right\}.
\]
For $X>0$ we write
\[
\mathcal{F}(X,R_2,\dots,R_\ell)
=
\#\left\{
K_n :
\Delta_{K_n}\le X,\;
\operatorname{sh}(K_n)\in W(R_2,\dots,R_\ell),\;
2 \text{ unramified in } K_n
\right\}.
\]
\noindent Our first result gives an asymptotic formula for this counting
function.  The main term involves an explicit constant together with an iterated integral
\[
F(R_2,\dots,R_\ell):=\int_{R_{\ell-1}}^{R_\ell}
\!\!\cdots
\int_{R_2}^{a_3}
\frac{da_2}{a_2}\cdots\frac{da_\ell}{a_\ell}.
\]
This is a polynomial in the variables $\log R_j$.

\begin{theorem}\label{thm:main-intro}
Let $n\ge1$ and $\ell=2^n-1$.  Fix parameters
$1\le R_2\le\dots\le R_\ell$.  Then
\[
\mathcal{F}(X,R_2,\dots,R_\ell)
\sim
C_\ell\,
F(R_2,\dots,R_\ell)\,
X^{1/2^{\,n-1}}
\qquad (X\to\infty),
\]
where
\[
C_\ell=
\frac{\omega_1 c_\ell\ell !}{4^\ell\,\#\GL_n(\F_2)}
\prod_{p>2}
\left(1-\frac1p\right)^\ell
\left(1+\frac{\ell}{p}\right).
\]
\end{theorem}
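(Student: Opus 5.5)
Here is the plan. We parametrize the fields, reduce the shape condition to a condition on the radicands, and then count squarefree tuples in a hyperbolic-type region.

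\emph{Step 1: parametrization.} When $2$ is unramified in $K_n$, every quadratic subfield radicand satisfies $D_i\equiv 1\pmod 4$. By Chatelain's integral basis (\cite[Theorem~2.1]{Man}), $\mathcal{O}_{K_n}$ then has a basis built from products of the idempotent-type elements $(1+\sqrt{D_i})/2$. Write $D_i=\prod_{p\mid D_i}p$. The ramified primes are partitioned according to the nonzero vector of $\F_2^n$ (equivalently, the quadratic subfield) that they ``belong to''. Thus there are pairwise coprime squarefree odd integers $d_1,\dots,d_\ell$, indexed by the $\ell$ nonzero characters $\chi$, such that each $D_\chi$ is the product of the $d_\psi$ over the $\psi$ with $\chi(\psi)=-1$. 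The conductor-discriminant formula gives $\Delta_{K_n}=\prod_\chi D_\chi=\big(\prod_i d_i\big)^{2^{n-1}}$. This explains the exponent $X^{1/2^{n-1}}$: we count tuples with $d_1\cdots d_\ell\le X^{1/2^{n-1}}$.

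\emph{Step 2: computing the shape.} The trace form on the trace-zero projection is diagonal in the basis $\sqrt{D_\chi}$, with the Chatelain basis producing a fixed unipotent integral change of basis. The shape should therefore be the class of a fixed unimodular lattice $L_0$ acted on by the diagonal torus element $\operatorname{diag}(\sqrt{D_\chi})$, up to scaling. The parameters $\lambda_j$ should be monomials in the $d_i$, for instance ratios $D_{\chi_j}/D_{\chi_1}$ after ordering. I will take the earlier identification of $\operatorname{sh}(\lambda_2,\dots,\lambda_\ell)$ from the paper and express each $\lambda_j$ in terms of the $d_i$. The ordering condition $\lambda_2<\dots<\lambda_\ell$ uses the $\GL_n(\F_2)$-symmetry permuting subfields. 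This symmetry accounts for the factors $\ell!$ and $1/\#\GL_n(\F_2)$ when we pass from ordered tuples $(d_i)$ to fields, since each field corresponds to $\#\GL_n(\F_2)$ labelled generator choices and to orderings of the $d_i$.

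\emph{Step 3: counting.} We need to count tuples of pairwise coprime squarefree odd $d_i$, with the congruences forcing every $D_\chi\equiv1\pmod4$, subject to $\prod d_i\le Y:=X^{1/2^{n-1}}$ and the inequalities $R_j\le\lambda_j\le R_\ell$. Because the $\lambda_j$ constraints are homogeneous of degree zero in logarithmic coordinates, they cut out a cone in $(\log d_i)$. I will use the multivariable Dirichlet series
\[
\sum \prod_i d_i^{-s}\,\mathbf 1_{\text{cond}} \;=\; \prod_{p>2}\Big(1+\frac{\ell}{p^s}\Big)\times(\text{local factor at }2\text{ and congruence characters}).
\]
Its leading pole at $s=1$ has order $\ell$, with residue $\prod_{p>2}(1-1/p)^\ell(1+\ell/p)$ times $4^{-\ell}$-type congruence densities. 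Combined with a Tauberian theorem, or a Davenport/Lipschitz-type lattice count in the log-cone, the count is asymptotically
\[
\text{(Euler product)}\cdot Y\cdot \operatorname{vol}\big(\text{cone slice}\big).
\]
Changing variables $a_j=\lambda_j$ turns the cone slice volume into the iterated integral $F(R_2,\dots,R_\ell)$ with measure $\prod da_j/a_j$. The constants $\omega_1$ and $c_\ell$ come from the remaining radial integral and the Jacobian.

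\emph{Main obstacle.} The hardest step is the uniform counting of squarefree, pairwise coprime tuples constrained to a region defined by ratio inequalities rather than a box, since the region is unbounded in some directions. I would first fix all but one variable, apply the squarefree-in-arithmetic-progression count with error $O(x^{1/2+\epsilon})$, and sum. Tails where some $d_i$ is small would be handled by showing that they contribute $o(Y)$. This works because the $\lambda$ constraints with fixed $R_\ell$ keep every ratio bounded, which forces all the $d_i$ to be of comparable size $\asymp Y^{1/\ell}$ up to constants. This comparability is what makes the error terms summable.
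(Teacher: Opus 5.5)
\paragraph{Where the plan fails.} Your overall skeleton matches the paper's:
\begin{itemize}
\item your $d_i$ are its strongly carefree $g_i$, with $D_j=\prod_{\mathbf v_i\cdot\mathbf v_j=1}g_i$;
\item $\Delta_{K_n}=(\prod g_i)^{2^{n-1}}$;
\item the $\lambda_j$ are degree-zero monomials in the $g_i$;
\item the substitution $(g_i)\mapsto(a_1,\lambda_2,\dots,\lambda_\ell)$ with $a_1=\prod g_i$ gives $\operatorname{Vol}\mathcal G_\sigma=c_\ell YF$, where $c_\ell=|\det C|^{-1}$.
\end{itemize}
The step that fails is the Tauberian one. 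The series $\sum\prod d_i^{-s}=\prod_{p>2}(1+\ell p^{-s})$ is the generating function of $\ell^{\omega(N)}\mu^2(N)$ over odd $N$. Its order-$\ell$ pole governs the count of \emph{all} admissible tuples with $\prod d_i\le Y$, and the mod-$4$ twists only change the constant. That count is $\asymp Y(\log Y)^{\ell-1}$. The series cannot see the ratio constraints, and tuples with shape in $W$ make up only a proportion $\asymp(\log Y)^{1-\ell}$ of all tuples. So no Tauberian theorem applied to this series can produce $Y\cdot\operatorname{vol}$. You must use the geometric count you list as an alternative. It has to be done in the $g$-coordinates, where $\Z^\ell$ is a lattice, not in the log-cone.

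\paragraph{The paper's route versus yours.} The geometric count is what the paper does.
\begin{itemize}
\item Bounded ratios give $g_j^\ell\le R_\ell^{C(n)}\prod_i g_i$, hence $g_j\ll Y^{1/\ell}$. This is an upper bound only: the $g_j$ are comparable to $(\prod g_i)^{1/\ell}$, not to $Y^{1/\ell}$.
\item Consequently all coordinate projections of $\mathcal G(Y)$ have volume $\ll Y^{(\ell-1)/\ell}$. Davenport's lemma then gives $\operatorname{Vol}+O(Y^{1-1/\ell})$, and $m_T^{-\ell}\operatorname{Vol}+O(Y^{1-1/\ell})$ on each translate of $m_T\Z^\ell$, where $m_T=\prod_{p<T}p^2$.
\item This imposes the local conditions at $p<T$ with density $\prod_{p<T}\mu_p$.
\item For $p\ge T$, one counts tuples with $p\mid g_i,\ p\mid g_j$ or $p^2\mid g_i$. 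Only $p\ll Y^{1/\ell}$ can occur, and each such prime costs $\ll p^{-2}\operatorname{Vol}+Y^{1-1/\ell}$. The total is $\operatorname{Vol}\sum_{p\ge T}p^{-2}+O(Y/\log Y)$, and one then lets $T\to\infty$.
\end{itemize}
Your alternative fixes $\ell-1$ variables and counts squarefree $d_\ell$ coprime to the rest, in an interval and a class mod $4$. It is viable and would even give a power saving, but ``and sum'' hides an induction. After the first step the main term carries the weight $\prod_{p\mid d_1\cdots d_{\ell-1}}\frac{p}{p+1}$ multiplied by a piecewise-monomial interval length. Each later step therefore needs a mean-value estimate for such multiplicative weights over squarefree integers coprime to the remaining variables, followed by partial summation. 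The paper's symmetric CRT sieve avoids this.

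\paragraph{Constants.} Your bookkeeping is wrong in two places.
\begin{itemize}
\item $\omega_1$ does not come from the radial integral, which contributes only the factor $Y$. The quantity $\omega_1/4^\ell$ is the $2$-adic density, where $\omega_1$ is the number of classes of $(g_i)\bmod 4$ for which every $a_k\equiv 1\pmod 4$.
\item $\ell!$ is not a multiplicity with which a field is represented; treating it that way would force you to divide by $\ell!$. It is the number of chambers $\mathcal G_\sigma$, one for each ordering of $D_1,\dots,D_\ell$, each of volume $c_\ell YF$.
\end{itemize}
The only overcounting of fields is by $\#\GL_n(\F_2)$, which comes from the choice of ordered basis $(a_1,\dots,a_n)$.
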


The constant $c_\ell:=|\op{det} C|^{-1}$ for an explicit matrix \eqref{defn of C} which arises from the archimedean volume computation
for the region of shape parameters, while the Euler product reflects
the density of tuples satisfying the strong carefreeness conditions at
each prime. For the definition of $\omega_1$, see Definition \ref{defn of omega}.

Theorem \ref{thm:main-intro} implies that the shapes of these fields
become equidistributed in the subset $\mathcal{S}_{K_n}$ with respect
to the natural measure $\mu$.

\begin{theorem}\label{thm:equidistribution}
Let $\mathcal{M}_n^{+}(i)$ denote the family of totally real
multiquadratic fields of degree $2^n$ in which $2$ is unramified.
Then the shapes of the fields in $\mathcal{M}_n^{+}(i)$ are
equidistributed in $\mathcal{S}_{K_n}$ with respect to $\mu$ in the
regularized sense of \cite[Conjecture~1.1]{Hassan}.  That is, there
exists a constant $C_n>0$ such that for every compact
$\mu$--continuity set $W\subset\mathcal{S}_{K_n}$,
\[
\lim_{X\to\infty}
X^{-1/2^{\,n-1}}
\#\{K_n\in\mathcal{M}_n^{+}(i):
\Delta_{K_n}\le X,\;
\operatorname{sh}(K_n)\in W\}
=
C_n\,\mu(W).
\]
\end{theorem}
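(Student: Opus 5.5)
The plan is to deduce Theorem~\ref{thm:equidistribution} from Theorem~\ref{thm:main-intro} by a standard approximation argument. Sets of the form $W(R_2,\dots,R_\ell)$ are generating boxes for the topology of $\mathcal{S}_{K_n}$. So it suffices to show two things: that $\mu$ restricted to $\mathcal{S}_{K_n}$, written in the coordinates $\lambda_2,\dots,\lambda_\ell$, is a constant multiple of $\frac{d\lambda_2}{\lambda_2}\cdots\frac{d\lambda_\ell}{\lambda_\ell}$ on the ordered chamber $\lambda_2<\dots<\lambda_\ell$; and that the counting functional is additive and monotone in $W$.

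\textbf{Step 1: identify the measure.} The shape of $K_n$ is the similarity class of a lattice with an orthogonal-type basis. Its Gram matrix is diagonal in suitable coordinates, with entries determined by $\lambda_j$ (essentially the radicands $D_j$ up to bounded local factors at $2$). The locus $\mathcal{S}_{K_n}$ is therefore the image of the diagonal torus $A\subset \GL_\ell(\R)$ modulo scaling. The measure $\mu$ induced from Haar measure restricts along this torus orbit to the Haar measure of $A/\R^\times_{>0}$, which in the coordinates $\lambda_j$ is $\prod_j d\lambda_j/\lambda_j$. The ordering $\lambda_2<\dots<\lambda_\ell$ records the quotient by the finite symmetry (permutations realized in $\GL_\ell(\Z)$). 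This gives
\[
\mu\bigl(W(R_2,\dots,R_\ell)\bigr)=\kappa\, F(R_2,\dots,R_\ell)
\]
for a fixed normalizing constant $\kappa>0$. Comparing with Theorem~\ref{thm:main-intro}, the limit in the statement holds for $W=W(R_2,\dots,R_\ell)$ with $C_n:=C_\ell/\kappa$.

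\textbf{Step 2: extend to general sets.} By inclusion–exclusion on the $R_j$, the limit formula extends to all boxes $\{R_j\le \lambda_j\le R_j'\}$ inside the chamber, and then to finite disjoint unions of such boxes. Both sides are finitely additive, since differences of the regions $W(\cdot)$ are again counted by Theorem~\ref{thm:main-intro}. Now let $W$ be a compact $\mu$-continuity set. Since $\mu(\partial W)=0$ and $W$ is compact (so it stays in a bounded range of $\log\lambda_j$), for each $\varepsilon>0$ we choose finite unions of boxes $B^-\subset W\subset B^+$ with $\mu(B^+\setminus B^-)<\varepsilon$. Monotonicity of the count then gives
\[
C_n\mu(B^-)\le\liminf X^{-1/2^{n-1}}\#\{\cdots\}\le\limsup X^{-1/2^{n-1}}\#\{\cdots\}\le C_n\mu(B^+),
\]
and letting $\varepsilon\to0$ gives the result.

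\textbf{Main obstacle.} I expect the hardest step to be Step 1. It requires verifying carefully that the parametrization $\operatorname{sh}(\lambda_2,\dots,\lambda_\ell)$ is injective on the ordered chamber, with no further identifications by $\GL_\ell(\Z)$ beyond permutations. It also requires checking that the Jacobian from Haar measure on $\GL_\ell(\R)$ to the torus coordinates is exactly logarithmic, so that the ratio of the count to $\mu(W)$ does not depend on $W$. For this I would use the explicit integral basis from Chatelain's classification, which in the $2$-unramified case is a fixed rational change of basis. That change of basis contributes only the constant $c_\ell=|\det C|^{-1}$ and does not alter the form of the measure.
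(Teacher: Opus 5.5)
Your route is the paper's: the paper deduces Theorem~\ref{thm:equidistribution} from Theorem~\ref{thm:main-intro} in one line by asserting that $\mu(W(R_2,\dots,R_\ell))$ is a constant multiple of $F(R_2,\dots,R_\ell)$. Your Step~2 just makes the sandwiching explicit. The gap is the identification in Step~1, which the paper also makes when it says the $D_j$ can be permuted without changing $\operatorname{sh}(K_n)$. For $n\ge 3$ the ordered chamber is \emph{not} a fundamental domain. The worry in your last paragraph points the wrong way: the problem is not extra identifications, but that most permutations are not identifications at all.

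The Gram matrix of $\Lambda^\perp$ is $2^{-n}\tilde A_n\operatorname{diag}(D_1,\dots,D_\ell)\tilde A_n^{T}$. So $\mathcal S_{K_n}$ is the image of the translate $\tilde A_n A$ of the diagonal torus, with $\tilde A_n\notin\GL_\ell(\Z)$. A permutation $\pi$ is realized in $\GL_\ell(\Z)$ exactly when $\tilde A_nP_\pi\tilde A_n^{-1}\in\GL_\ell(\Z)$, i.e.\ when $P_\pi$ preserves the row lattice $L=\Z\mathbf{1}+2M$ of $\tilde A_n$. Here $M$ is spanned by the indicator vectors of the sets $\{k:\mathbf v_i\cdot\mathbf v_k=1\}$, so it reduces mod $2$ to the simplex code. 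Hence $\pi$ must permute the hyperplane complements in $\F_2^n\setminus\{0\}$, i.e.\ come from $\GL_n(\F_2)$ (a change of generators of $K_n$). For $n\ge 3$ this is a proper subgroup of $S_\ell$ ($168$ versus $5040$ for $n=3$). Generically there are no further coincidences: for $n=3$ and $\Q$-linearly independent $D_j$, $\Lambda^\perp$ contains a vector of norm $\tfrac12(D_a+D_b+D_c+D_d)$ exactly when $\{a,b,c,d\}$ is the complement of a line in $\mathbb{P}^2(\F_2)$. So transposing two of the $D_j$ changes the shape.

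Consequently $\mathcal S_{K_n}$ is generically the torus modulo $\GL_n(\F_2)$. A fundamental domain for this quotient is a union of $\ell!/\#\GL_n(\F_2)$ ordered chambers ($30$ for $n=3$), and the sets controlled by Theorem~\ref{thm:main-intro} are too coarse to approximate an arbitrary compact $W$. What that theorem actually counts (tuples in $\bigcup_\sigma\mathcal G_\sigma$) are fields whose \emph{sorted} ratios lie in a box, which is a union of one box in every chamber. If you instead read $W(R_2,\dots,R_\ell)$ as the image of a single ordered box, all such sets lie in one chamber. Either way, take $W$ a small closed box in $\log\lambda$-coordinates around a generic point of one chamber. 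There is then no sandwich $B^-\subset W\subset B^+$ with $\mu(B^+\setminus B^-)$ small, so Step~2 fails and Theorem~\ref{thm:equidistribution} does not follow from Theorem~\ref{thm:main-intro} as stated.

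The missing input is a count for a box inside a \emph{single} $\mathcal G_\sigma$. The volume computation of Proposition~\ref{prop:volume-G-sigma} (independent of $\sigma$), Davenport's lemma and the sieve all apply verbatim to one $\mathcal G_\sigma$. Moreover, each Case~1 field corresponds to exactly $\#\GL_n(\F_2)$ tuples whose $D$-vectors form one $\GL_n(\F_2)$-orbit. With this per-chamber count, your sandwich argument goes through chamber by chamber against $\prod_j d\lambda_j/\lambda_j$. For $n=2$, where $\GL_2(\F_2)\cong S_3$, your argument is fine as written.
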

Theorem \ref{thm:equidistribution} follows from Theorem \ref{thm:main-intro} since up to a positive constant, $F(R_2, \dots, R_\ell)$ equals $\mu\left(W(R_2, \dots, R_\ell)\right)$ (cf. \cite[section 2.4]{Hassan} for details). This establishes
\cite[Conjecture~1.1]{Hassan} for all $n$.
\par We note here that our arguments, when specialized to $n=3$ lead to several corrections in the unpublished work \cite{Hassan},
particularly concerning the parametrization of volumes and the sieve tail estimate in passing to congruence conditions at all primes.
Specifically, the sieve-theoretic deduction appearing on
\cite[line~6, p.~54]{Hassan} is not valid as stated, since the implied
constant in that estimate depends on the prime $p$ when $p$ is large
relative to $N$. We rectify this issue in the proof of
Theorem~\ref{thm:infinite-sieve-general}.
\section{Preliminary notions} 
\subsection{Shapes of number fields} \label{sec:shapes_notion}
In order to give a formal definition of the shape of a number field, we first define the geometric notion of the shape of a full rank lattice contained in an Euclidean inner product space. Let $V$ be a real vector space of finite dimension $m$, equipped with an inner product $\langle \cdot, \cdot \rangle$. Choose a basis $\mathbf{e}_1, \dots, \mathbf{e}_m$ of $V$ such that 
\[\langle \mathbf e_i, \mathbf e_j\rangle =\begin{cases}
    1 & \text{ if }i=j;\\
    0 & \text{ if }i\neq j.
\end{cases}\] With respect to this choice of basis identify the automorphism group of $V$ with the group of $m\times m$ invertible matrices $\op{GL}_m(\mathbb{R})$. Let $\Lambda \subset V$ be a lattice of full rank. Let $\operatorname{GO}_m(\mathbb{R})$ be the group consisting of $M\in \op{GL}_m(\mathbb{R})$ such that $M M^T$ is a scalar matrix.

\par Choose a $\mathbb{Z}$-basis $ w_1, \dots, w_m$ of $\Lambda$ and write $ w_i=\sum_{j=1}^m a_{i,j} \mathbf e_j$. Consider the matrix \[W =(a_{i,j})\in \operatorname{GL}_m(\mathbb{R}).\] 
\begin{definition}The \emph{space of shapes} of rank-$m$ lattices is defined as the double coset space:
\[
    \mathcal{S}_m := \operatorname{GL}_m(\mathbb{Z}) \backslash \operatorname{GL}_m(\mathbb{R}) / \operatorname{GO}_m(\mathbb{R}),
\]
and the \emph{shape of $\Lambda$}, denoted $\operatorname{sh}(\Lambda)$, is the double coset $\operatorname{GL}_m(\mathbb{Z}) W \operatorname{GO}_m(\mathbb{R}) \in \mathcal{S}_m$. 
\end{definition}
\noindent Note that the shape is independent of the choice of $\mathbb{Z}$-basis for $\Lambda$ and remains invariant if $\Lambda$ is scaled or acted upon by an orthogonal transformation. 
\par An equivalent and computationally convenient description of lattice shapes is formulated using Gram matrices. Let $\mathcal{P}_m$ denote the space of $m \times m$ positive-definite, symmetric real matrices. The group $\operatorname{GL}_m(\mathbb{Z})$ acts on $\mathcal{P}_m$ via $U \cdot S := U S U^T$ for $U \in \operatorname{GL}_m(\mathbb{Z})$. On the other hand, the multiplicative group $\mathbb{R}^\times$ acts by homotheties (scaling), $r \cdot S := r^2 S$ for $r \in \mathbb{R}^\times$. The map sending a basis matrix $W$ to its corresponding Gram matrix $W W^T$ induces a natural, $\operatorname{GL}_m(\mathbb{Z})$-equivariant bijection:
\[
    \operatorname{GL}_m(\mathbb{R}) / \operatorname{GO}_m(\mathbb{R}) \;\longrightarrow\; \mathcal{P}_m / \mathbb{R}^\times.
\]
Consequently, the space of shapes can be canonically identified with the quotient space:
\begin{equation} \label{eq:shape_gram_space}
    \mathcal{S}_m \;=\; \operatorname{GL}_m(\mathbb{Z}) \backslash \mathcal{P}_m / \mathbb{R}^\times.
\end{equation}

In this framework, if the lattice $\Lambda$ has a $\mathbb{Z}$-basis $w_1, \dots, w_m$, its Gram matrix is denoted by \[\operatorname{Gr}(\Lambda) := \bigl( \langle w_k, w_j \rangle \bigr)_{k,j}.\] Under the identification in \eqref{eq:shape_gram_space}, the shape of $\Lambda$ is precisely the equivalence class of $\operatorname{Gr}(\Lambda)$ in $\mathcal{S}_m$. Indeed, if one applies a unimodular change of basis $U \in \operatorname{GL}_m(\mathbb{Z})$ and a scaling factor $r \in \mathbb{R}^\times$, the Gram matrix transforms to $r^2 U \operatorname{Gr}(\Lambda) U^T$. This demonstrates that the equivalence class in $\mathcal{S}_m$ is invariant with respect to both transformations.

\par Now let $K$ be a number field of degree $n$ with distinct embeddings $\sigma_1, \dots, \sigma_n : K \hookrightarrow \mathbb{C}$. Let $K_{\mathbb{R}}$ be the real vector space spanned by the image of the Minkowski embedding
\[J: K\rightarrow \mathbb C^n\] defined by $J(x):=(\sigma_1(x), \dots, \sigma_n(x))$. This gives an inclusion $J: K\hookrightarrow K_{\mathbb{R}}$. The Hermitian inner product on $\mathbb{C}^n$ restricts to a symmetric inner product on $K_{\mathbb{R}}$. For any $x, y \in K$, this inner product evaluates to
\[
    \langle J(x), J(y) \rangle = \sum_{i=1}^n \sigma_i(x)\overline{\sigma_i(y)}.
\]
We embed $K$ into $K_{\mathbb{R}}$ via the canonical Minkowski embedding:
\[
    J : K \hookrightarrow K_{\mathbb{R}}, \quad x \mapsto \bigl(\sigma_1(x), \ldots, \sigma_n(x)\bigr).
\]
Under the mapping $J$, the ring of integers $\mathcal{O}_K$ forms a full-rank lattice in $K_{\mathbb{R}}$. A natural first attempt to define the shape of $K$ would be to take the shape of this full lattice $J(\mathcal{O}_K)$. However, this lattice always contains the distinguished vector $J(1) = (1, 1, \dots, 1)$, which imposes a rigid constraint. To eliminate this obstruction, we consider the trace-zero projection defined by
\[
    \alpha \longmapsto \alpha^\perp := n\alpha - \operatorname{Tr}_{K/\mathbb{Q}}(\alpha).
\]
The image
\[
    \mathcal{O}_K^\perp := \{\alpha^\perp : \alpha \in \mathcal{O}_K\}
\]
is a free $\mathbb{Z}$-module of rank $n-1$. Applying the Minkowski embedding, the corresponding lattice $\Lambda^\perp := J(\mathcal{O}_K^\perp) \subset K_{\mathbb{R}}$ lies entirely in the orthogonal complement of $J(1)$.
\begin{definition}
    Let $K$ be a number field of degree $n$. The \emph{shape of the number field} $K$, denoted $\operatorname{sh}(K)$, is defined as the shape of its $(n-1)$-dimensional projected lattice $\Lambda^\perp$, viewed as an element of the shape space $\mathcal{S}_{n-1}$.
\end{definition}
\noindent To describe this shape concretely, one can choose an integral basis of the form
$\{1, \alpha_1, \dots, \alpha_{n-1}\}$
of $\mathcal{O}_K$. Since the element $1$ projects strictly to the zero vector, the remaining projected elements $
    \{\alpha_1^\perp, \dots, \alpha_{n-1}^\perp\}$
form a complete $\mathbb{Z}$-basis for the projected lattice $\Lambda^\perp$. The associated Gram matrix is then computed as
\[
    \operatorname{Gr}(\Lambda^\perp) = \bigl( \langle \alpha_k^\perp, \alpha_j^\perp \rangle \bigr)_{1 \le k, j \le n-1},
\]
and its equivalence class in
\[
    \mathcal{S}_{n-1} = \operatorname{GL}_{n-1}(\mathbb{Z}) \backslash \mathcal{P}_{n-1} / \mathbb{R}^\times.
\]
\begin{remark}When $K$ is totally real, we identify $K_{\mathbb R}$ with the Euclidean space $\mathbb R^n$ with standard inner product. In this case, 
\[\langle J(x), J(y)\rangle=\sum_{i=1}^n \sigma_i(x) \sigma_i(y)=\op{Tr}_{K/\Q}(xy).\]
\end{remark}

\subsection{Integral bases of totally real multiquadratic number fields}
\par 
In this section, we recall, for the reader’s convenience, the explicit description of an integral basis for multiquadratic number fields from \cite{Man}. These results will be used repeatedly in our subsequent analysis of shapes and their distribution.

\par
Let $n \ge 2$ and let $K_n/\Q$ be a real Galois extension with Galois group
$\op{Gal}(K_n/\Q)\simeq \F_2^n$. 
Such a field is \emph{multi-quadratic}, i.e., may be written in the form
\begin{equation}\label{defn of K} K_n=\Q\bigl(\sqrt{D_{2^0}},\ldots,\sqrt{D_{2^{n-1}}}\bigr),
\end{equation}
where $D_{2^0},\ldots,D_{2^{n-1}}>1$ are squarefree natural numbers. Further, these integers are pairwise distinct and satisfy the nondegeneracy condition that the squarefree part of $D_{2^i}D_{2^j}$ is not equal to $D_{2^k}$ whenever $i,j,k$ are distinct.  
Under this hypothesis the compositum has full degree $[K_n:\Q]=2^n$. For notational convenience we write $\mathcal D := \{D_{2^0},\ldots,D_{2^{n-1}}\}$ which we refer to as a generating set for $K_n$. We further set $\ell := 2^n-1$.

The indexing of the generators by powers of $2$ is chosen so as to represent the full lattice of quadratic subfields of $K_n$.  
Indeed, every nontrivial quadratic subextension corresponds to a nonempty subset of $\mathcal D$, and hence to an integer $1\le j\le \ell$ with binary expansion
\[
j=i+2^k,\qquad 0\le i\le 2^k-1,\; 1\le k\le n-1 .
\]

\begin{lemma}
Let $K_n$ be as in \eqref{defn of K}. Then the quadratic subfields of $K_n$ are precisely the fields $\Q(\sqrt{D_j})$ for $1\le j\le \ell$, where
\begin{equation}\label{eq:quadratic-subfields}
D_j=\frac{D_iD_{2^k}}{\gcd(D_i,D_{2^k})^2}.
\end{equation}
We adopt the convention $D_0=1$.
\end{lemma}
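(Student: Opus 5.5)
The approach is Kummer theory for elementary abelian $2$-extensions of $\Q$: quadratic subfields of $K_n$ correspond to subgroups of a finite $\F_2$-vector space of square classes. Let $V\subset \Q^{\times}/(\Q^{\times})^2$ be the subgroup generated by the classes of $D_{2^0},\dots,D_{2^{n-1}}$. By Kummer theory, and since $\Q$ contains $\mu_2$, $K_n=\Q(\sqrt{V})$ and the map $W\mapsto \Q(\sqrt{W})$ is an inclusion-preserving bijection between subgroups of $V$ and subfields of $K_n$. The hypothesis $[K_n:\Q]=2^n$ means the classes of the $D_{2^k}$ are $\F_2$-linearly independent, so $V\cong \F_2^n$. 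Quadratic subfields then correspond to one-dimensional subspaces, that is, to nonzero elements $v\in V$, and there are exactly $2^n-1=\ell$ of them. Each nonzero class $v$ has a unique squarefree positive representative $d_v>1$; positivity holds because all $D_{2^k}>0$. The corresponding subfield is $\Q(\sqrt{d_v})$, and distinct $v$ give distinct fields, since $\Q(\sqrt a)=\Q(\sqrt b)$ with $a,b$ squarefree forces $a=b$.

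Next, I would show by induction on $j$ that $D_j$, defined recursively by \eqref{eq:quadratic-subfields} with $D_0=1$, is the squarefree representative of the class $\sum_{k\in S(j)}[D_{2^k}]$, where $S(j)$ is the set of positions of $1$'s in the binary expansion of $j$. For $j=2^k$ we have $i=0$, so $D_j=D_{2^k}$. For $j=i+2^k$ with $0\le i<2^k$, the set $S(j)$ is the disjoint union $S(i)\cup\{k\}$. The key elementary fact is: if $a,b$ are squarefree and $g=\gcd(a,b)$, then $ab/g^2=(a/g)(b/g)$ is squarefree, since $a/g$ and $b/g$ are coprime squarefree integers, and it lies in the square class of $ab$. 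Hence $D_j$ is the squarefree representative of $[D_i]+[D_{2^k}]$, which completes the induction.

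Finally, as $j$ runs over $1\le j\le \ell$, the sets $S(j)$ run over all nonempty subsets of $\{0,\dots,n-1\}$. By linear independence, the sums $\sum_{k\in S}[D_{2^k}]$ are then exactly the nonzero elements of $V$, each appearing once. Consequently the fields $\Q(\sqrt{D_j})$ are pairwise distinct and exhaust all quadratic subfields.

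The only point needing care is deducing linear independence from the stated nondegeneracy conditions. Rather than use those conditions directly, I would take the degree assumption $[K_n:\Q]=2^n$, which the paper asserts as given, as the operative hypothesis: it is equivalent to $\dim_{\F_2}V=n$. The remaining steps are routine.
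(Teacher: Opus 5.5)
Your proposal is correct and follows essentially the paper's route. The paper counts the $\ell$ quadratic subfields via index-$2$ subgroups of $\op{Gal}(K_n/\Q)\simeq\F_2^n$ and matches them with the fields generated by the products $\prod_{r\in S}\sqrt{D_{2^r}}$, which is the same square-class argument you phrase through Kummer theory. Your version also proves two steps the paper only asserts, namely that distinct subsets $S$ give distinct squarefree radicands (this needs the $\F_2$-independence of the classes $[D_{2^k}]$) and that $D_iD_{2^k}/\gcd(D_i,D_{2^k})^2$ is the squarefree representative. You are right to use $[K_n:\Q]=2^n$ as the working hypothesis rather than the stated pairwise nondegeneracy condition, since that condition does not imply independence once $n\ge 4$; for example, $2,3,5,30$ satisfy it, yet $2\cdot3\cdot5\cdot30=30^2$.
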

\begin{proof}
 
Since $\op{Gal}(K_n/\Q)\simeq \F_2^n$, the Galois group is an
$n$–dimensional $\mathbb F_2$–vector space.
Quadratic subextensions of $K$ correspond bijectively to
index-$2$ subgroups, equivalently to nonzero linear functionals
on $\op{Gal}(K_n/\Q)$.
Hence $K_n$ has exactly $2^n-1=\ell$ distinct quadratic subfields.

For any nonempty subset $S\subset\{0,\dots,n-1\}$,
set
\[
\alpha_S=\prod_{r\in S}\sqrt{D_{2^r}}.
\]
Distinct subsets $S$ give distinct squarefree integers $D_S$. Thus the associated quadratic fields are distinct. Since there are $\ell$ nonempty subsets, these exhaust all
quadratic subextensions of $K_n$.
\par Writing $j=\sum_{r\in S}2^r$ with
$j=i+2^k$ ($0\le i<2^k$),
we obtain
\[
D_j=\operatorname{sqf}(D_iD_{2^k})
=\frac{D_iD_{2^k}}{\gcd(D_i,D_{2^k})^2},
\]
with the convention $D_0=1$.
\end{proof}
Since each generator $D_i \in \mathcal{D}$ is a squarefree integer, it naturally follows that $D_i \not\equiv 0 \pmod 4$, leaving $D_i \equiv 1, 2,$ or $3 \pmod 4$. To systematically classify all multiquadratic extensions, Chatelain \cite{Chatelain} showed that one can choose a generating set for $K_n$ such that at most one generator is congruent to $2 \pmod 4$ and at most one is congruent to $3 \pmod 4$. 

\begin{lemma}[{\cite[p.~10]{Chatelain}}] \label{lem:generator_congruence}
    Let $K_n = \Q\bigl(\sqrt{D_{2^0}}, \ldots, \sqrt{D_{2^{n-1}}}\bigr)$ be a multiquadratic number field with generating set $\mathcal{D} = \{D_{2^0}, \ldots, D_{2^{n-1}}\}$. Then, without loss of generality, $\mathcal{D}$ contains at most one generator congruent to $2 \pmod 4$ and at most one generator congruent to $3 \pmod 4$.
\end{lemma}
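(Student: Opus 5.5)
The idea is to use the freedom in choosing a generating set. Any $n$ radicands whose classes in $\Q^\times/(\Q^\times)^2$ form an $\F_2$-basis of the subgroup $V\subset \Q^\times/(\Q^\times)^2$ generated by $\mathcal D$ (which has dimension $n$) generate the same field $K_n$. So we will apply elementary row operations over $\F_2$ to $\mathcal D$. Replacing a generator $D_{2^i}$ by $\operatorname{sqf}(D_{2^i}D_{2^j})$ with $j\neq i$ is invertible, so it preserves the span and the count of generators. The task is to show these operations can bring the residues mod $4$ into the required form.

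We track two invariants of a squarefree integer $D$: its $2$-adic valuation $v_2(D)\in\{0,1\}$, and, for the odd part $D'=D/2^{v_2(D)}$, the residue of $D'$ modulo $4$. A short computation determines how these behave under $D,E\mapsto \operatorname{sqf}(DE)=DE/\gcd(D,E)^2$. The valuation $v_2$ adds modulo $2$. For the odd part, write $D'=gd$ and $E'=ge$ with $g=\gcd(D',E')$. Then the odd part of $\operatorname{sqf}(DE)$ is $de$, and
\[
de\equiv de\,g^2 = D'E' \pmod 4,
\]
since $g$ is odd and so $g^2\equiv 1\pmod 8$. Hence the map $D\mapsto (v_2(D),\,D' \bmod 4)\in \Z/2\times(\Z/4)^\times\cong \F_2^2$ is a group homomorphism $\phi: V\to \F_2^2$.

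The reduction then becomes linear algebra. The residues are read off from $\phi$ as follows:
\begin{itemize}
\item $D\equiv 1\pmod 4$ exactly when $\phi(D)=(0,1)$;
\item $D\equiv 3\pmod 4$ exactly when $\phi(D)=(0,3)$;
\item $D\equiv 2\pmod 4$ exactly when $v_2(D)=1$.
\end{itemize}
The subspace $\ker\phi$ consists of the classes $\equiv 1\pmod 4$. Choose a basis of $V$ adapted to the filtration $\ker\phi\subset\phi^{-1}(\{0\}\times(\Z/4)^\times)\subset V$. It has $\dim V-\dim\phi(V)$ elements in $\ker\phi$, which are all $\equiv1\pmod4$. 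Since $\dim\phi(V)\le 2$, this leaves at most two further basis elements. If $\phi(V)$ contains $(0,3)$, we take one basis element mapping to $(0,3)$, i.e.\ a generator $\equiv3\pmod4$. If $\phi(V)$ has a vector with first coordinate $1$, we take one basis element with $v_2=1$, which is $\equiv 2\pmod 4$. This yields a generating set with at most one generator $\equiv 2$ and at most one $\equiv 3\pmod 4$.

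Finally, all the new basis elements are classes of positive squarefree integers greater than $1$ (positivity is preserved by products, and nontriviality by independence). Being a basis, they are distinct and satisfy the nondegeneracy condition, so they form a valid generating set as in \eqref{defn of K}. The only step needing care is the homomorphism property of $\phi$, specifically the effect of the $\gcd^2$ factor. This is handled by the observation that odd squares are $1\bmod 4$.
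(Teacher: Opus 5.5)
Your proof is correct and complete. The paper gives no argument of its own for this lemma; it only cites Chatelain. The direct way to see the statement is an iterative replacement. If two generators are $\equiv 2 \pmod 4$, replace one by the squarefree part of their product, which is odd. Once at most one even generator remains, whenever two generators are $\equiv 3 \pmod 4$, replace one by the squarefree part of their product, which is $\equiv 1 \pmod 4$. The second kind of move never creates even generators, so the process terminates in the required form. Your proof rests on the same two facts: $v_2$ adds modulo $2$, and the odd part of $\operatorname{sqf}(DE)$ is $\equiv D'E' \pmod 4$. You package them as a homomorphism $\phi\colon V\to\Z/2\times(\Z/4)^\times$ and choose a basis adapted to $\ker\phi\subset\phi^{-1}(\{0\}\times(\Z/4)^\times)\subset V$. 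This gives more than existence. Since $V$, and hence $\phi(V)$, depends only on $K_n$, your normalized basis has exactly $\dim\phi(V)$ generators not $\equiv 1 \pmod 4$, which is the minimum possible. Moreover, which of the three cases occurs (including which alternative in Case (2)) is determined by $\phi(V)$ alone. This shows that the paper's ``exactly one of the following cases'' is a property of the field rather than of the chosen generators. It also explains the paper's remark that $2$ is unramified precisely in Case (1), i.e.\ when $\phi(V)$ is trivial. The iterative argument yields the normal form but not this invariance without further work. A cosmetic point: your opening paragraph about elementary row operations is never used, since you pick an $\F_2$-basis of $V$ directly, and any such basis of positive squarefree representatives generates $K_n$.
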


In view of above lemma, the study of multiquadratic extensions can be divided to three distinct categories based on the congruences of their generators modulo $4$. Specifically, up to reordering, the generating set $\mathcal{D}$ falls into exactly one of the following cases:
\begin{enumerate}
    \item $\mathcal{D} \equiv \{1, 1, \ldots, 1, 1, 1\} \pmod 4$.
    \item $\mathcal{D} \equiv \{1, 1, \ldots, 1, 1, 2\} \pmod 4$ \quad or \quad $\mathcal{D} \equiv \{1, 1, \ldots, 1, 1, 3\} \pmod 4$.
    \item $\mathcal{D} \equiv \{1, 1, \ldots, 1, 2, 3\} \pmod 4$.
\end{enumerate}We note that $2$ is unramified in $K_n$ precisely in Case (1). Based on this classification, Chatelain further provides an explicit construction for the integral basis of $K_n$ (cf. \cite{Chatelain}, \cite[Theorem 2.1]{Man}).

\begin{theorem} \label{thm:integral_basis}
    Let $K_n = \Q\bigl(\sqrt{D_{2^0}}, \ldots, \sqrt{D_{2^{n-1}}}\bigr)$ be a multiquadratic number field of degree $2^n$, and let $D_1, \ldots, D_{\ell}$ denote the generators of its $\ell = 2^n - 1$ quadratic subfields. A $\Z$-basis for the ring of integers $\mathcal{O}_{K_n}$ can be constructed according to the congruences of the generating set $\mathcal{D} = \{D_{2^0}, \ldots, D_{2^{n-1}}\}$ modulo $4$:
    \begin{enumerate}
        \item If $\mathcal{D} \equiv \{1, 1, \ldots, 1\} \pmod 4$, then a $\Z$-basis of $\mathcal{O}_{K_n}$ is given by the set of Galois conjugates
        \[
            \bigl\{ \sigma(E) \bigm| \sigma \in \operatorname{Gal}(K_n/\Q) \bigr\},
        \]
        where $E := \frac{1}{2^n}\bigl(1 + \sqrt{D_1} + \cdots + \sqrt{D_{\ell}}\bigr)$.
        
        \item If $\mathcal{D}$ is congruent to either $\{1, \ldots, 1, 2\} \pmod 4$ or $\{1, \ldots, 1, 3\} \pmod 4$, and we assume $D_{2^{n-1}}$ is the unique generator not congruent to $1 \pmod 4$, then a $\Z$-basis of $\mathcal{O}_{K_n}$ is given by
        \[
            \bigl\{ \sigma(E_i) \bigm| \sigma \in \operatorname{Gal}\bigl(K_n/\Q(\sqrt{D_{2^{n-1}}})\bigr), \; i \in \{1, 2\} \bigr\},
        \]
        where the elements $E_1$ and $E_2$ are defined as:
        \begin{align*}
            E_1 &:= \frac{1}{2^{n-1}}\bigl(1 + \sqrt{D_1} + \cdots + \sqrt{D_{2^{n-1}-1}}\bigr), \\
            E_2 &:= \frac{1}{2^{n-1}}\bigl(\sqrt{D_{2^{n-1}}} + \cdots + \sqrt{D_{\ell}}\bigr).
        \end{align*}

        \item If $\mathcal{D} \equiv \{1, \ldots, 1, 2, 3\} \pmod 4$, where $D_{2^{n-2}} \equiv 2 \pmod 4$ and $D_{2^{n-1}} \equiv 3 \pmod 4$, then a $\Z$-basis of $\mathcal{O}_{K_n}$ is given by
        \[
            \bigl\{ \sigma(E_i) \bigm| \sigma \in \operatorname{Gal}\bigl(K_n/\Q(\sqrt{D_{2^{n-2}}}, \sqrt{D_{2^{n-1}}})\bigr), \; i \in \{1, 2, 3, 4\} \bigr\},
        \]
        where the elements $E_1, E_2, E_3,$ and $E_4$ are defined as:
        \begin{align*}
            E_1 &:= \frac{1}{2^{n-2}}\bigl(1 + \sqrt{D_1} + \cdots + \sqrt{D_{2^{n-2}-1}}\bigr), \\
            E_2 &:= \frac{1}{2^{n-2}}\bigl(\sqrt{D_{2^{n-2}}} + \cdots + \sqrt{D_{2^{n-1}-1}}\bigr), \\
            E_3 &:= \frac{1}{2^{n-2}}\bigl(\sqrt{D_{2^{n-1}}} + \cdots + \sqrt{D_{3\cdot 2^{n-2}-1}}\bigr), \\
            E_4 &:= \frac{1}{2^{n-1}}\bigl(\sqrt{D_{2^{n-2}}} + \cdots + \sqrt{D_{2^{n-1}-1}} + \sqrt{D_{3\cdot 2^{n-2}}} + \cdots + \sqrt{D_{\ell}}\bigr).
        \end{align*}
    \end{enumerate}
\end{theorem}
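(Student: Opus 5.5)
The plan is a discriminant comparison: first show that the proposed elements are algebraic integers, then show that the $\Z$-module $M$ they span has discriminant equal to $\Delta_{K_n}$. Together these force $M=\mathcal{O}_{K_n}$.

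I would compute the target discriminant with the conductor--discriminant formula. Since $\op{Gal}(K_n/\Q)\simeq\F_2^n$, it gives
\[
\Delta_{K_n}=\prod_{j=1}^{\ell}\Delta_{\Q(\sqrt{D_j})}.
\]
In Case (1) every $D_j$ is $\equiv1\pmod 4$, because it is a squarefree part of a product of integers $\equiv1\pmod 4$. Hence $\Delta_{K_n}=\prod_j D_j$. In Cases (2) and (3) the $D_j$ in the relevant cosets are $\equiv 2,3\pmod 4$ and contribute $4D_j$.

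On the other side, the module $N=\bigoplus_{j=0}^{\ell}\Z\sqrt{D_j}$ has an explicit discriminant, since
\[
\op{Tr}(\sqrt{D_i}\sqrt{D_j})=2^n D_j\,\delta_{ij}
\]
(the product is a non-square multiple of some $\sqrt{D_k}$ with $k\neq0$ when $i\neq j$). This gives $\op{disc}(N)=2^{n2^n}\prod_j D_j$.

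In Case (1) the conjugates $\sigma(E)$ are expressed in terms of $\sqrt{D_0},\dots,\sqrt{D_\ell}$ by $2^{-n}H$. Here $H$ is the $\pm1$ character table of $\F_2^n$, a Hadamard matrix with $|\det H|=2^{n2^{n-1}}$. So the span $M$ of the conjugates contains $N$ with index $2^{n2^n}/2^{n2^{n-1}}=2^{n2^{n-1}}$, and hence $\op{disc}(M)=\prod_j D_j=\Delta_{K_n}$.

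For Cases (2) and (3) I would run the same computation relative to the subgroup $\op{Gal}(K_n/\Q(\sqrt{D_{2^{n-1}}}))$, respectively $\op{Gal}(K_n/\Q(\sqrt{D_{2^{n-2}}},\sqrt{D_{2^{n-1}}}))$. The Hadamard matrix then has size $2^{n-1}$, respectively $2^{n-2}$, and the extra factors of $4$ are matched by the missing powers of $2$ in the denominators of $E_2$ and $E_3$. In Case (3) the element $E_4$ with denominator $2^{n-1}$ is what supplies the last factor of $2$.

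The main obstacle is integrality, and only at $2$. At an odd prime $p$ the displayed discriminant shows $N\otimes\Z_p$ is already maximal, and $2^{-n}$ is a $p$-adic unit. So every element of $2^{-n}N$ is $p$-integral.

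At $2$ I would compare $E$ with the product
\[
P=\prod_{r=0}^{n-1}\frac{1+\sqrt{D_{2^r}}}{2},
\]
which is visibly integral. Expanding, $P=2^{-n}\sum_{S} g_S\sqrt{D_S}$, where $g_S$ is an odd integer (a product of gcds of odd numbers). So $P$ differs from $E$ only by replacing coefficients $1$ with odd $g_S$. I would show $E-P=2^{-n}\sum_S(1-g_S)\sqrt{D_S}$ is $2$-integral by induction on $n$. Writing $K_n=K_{n-1}(\sqrt{D_{2^{n-1}}})$, the element $E$ for $K_n$ is $\frac{1}{2}\bigl(E'+\sqrt{D_{2^{n-1}}}\,E''\bigr)$, with $E',E''$ "twisted" versions of the $K_{n-1}$ element. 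Because $2$ is unramified, the relative integral basis at $2$ is $\{1,(1+\sqrt{D_{2^{n-1}}})/2\}$ over $\mathcal{O}_{K_{n-1}}\otimes\Z_2$. It then suffices to check $E'\equiv E''\pmod{2}$ in $\mathcal{O}_{K_{n-1}}\otimes\Z_2$, using $g\equiv1\pmod 2$ and $D\equiv1\pmod 4$ congruences.

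Cases (2) and (3) reduce to Case (1) for the subfield generated by the $1\pmod4$ generators. There one also uses that $\Z_2[\sqrt{D}]$ is maximal for $D\equiv2,3\pmod 4$.
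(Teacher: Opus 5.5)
The paper does not prove this statement; it quotes it from Chatelain \cite{Chatelain} via \cite[Theorem~2.1]{Man}. Your framework is sound: show the proposed elements are integral, then match discriminants. The bookkeeping is also right:
\begin{itemize}
\item $\op{disc}(N)=2^{n2^n}\prod_j D_j$;
\item odd primes are harmless, because $[\mathcal O_{K_n}:N]$ is a power of $2$;
\item the Hadamard index computations give exactly $\prod_j D_j$, $2^{2^n}\prod_j D_j$ and $2^{3\cdot 2^{n-1}}\prod_j D_j$. For Case (3) this uses $A_{n-2}$ together with the extra index $2^{2^{n-2}}$ coming from the $\delta_j$.
\end{itemize}

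The gap is the $2$-adic integrality step, which fails as you set it up. You write $\prod_{r\in S}\sqrt{D_{2^r}}=g_S\sqrt{D_S}$ with $g_S$ a positive odd integer, and claim that oddness of the $g_S$ makes $E-P$ $2$-integral. Take $n=2$, $D_1=21$, $D_2=33$, so $D_3=77$ and $\sqrt{21}\sqrt{33}=3\sqrt{77}$. Then
\[
P=\frac{(1+\sqrt{21})(1+\sqrt{33})}{4}=\frac{1+\sqrt{21}+\sqrt{33}+3\sqrt{77}}{4},\qquad E-P=-\frac{\sqrt{77}}{2},
\]
which is not integral. In your inductive form, $E'=\frac{1+\sqrt{21}}{2}$ and $E''=\frac{1+\sqrt{21}/3}{2}$ differ by $\sqrt{21}/3\notin 2(\mathcal O_{\Q(\sqrt{21})}\otimes\Z_2)$. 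Hence $E=\frac14(1+\sqrt{21}+\sqrt{33}+\sqrt{77})$ is not an algebraic integer. It also does not depend on which two of $21,33,77$ are called generators.

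So with all square roots positive (your reading), the statement is false. The claim for $E_1$ in Cases (2) and (3) fails for the same reason, e.g.\ $\mathcal D=\{21,33,3\}$ in Case (2). No argument can close this step: already for $n=2$ one needs $g\equiv 1\pmod 4$, not merely $g$ odd.

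What is missing is a sign normalization plus a genuinely $2$-adic argument. Let $g_S:=\bigl(\prod_{r\in S}D_{2^r}/D_S\bigr)^{1/2}$, put $g^*:=(-1)^{(g-1)/2}g\equiv 1\pmod 4$, and define $\sqrt{D_S}:=(g_S^*)^{-1}\prod_{r\in S}\sqrt{D_{2^r}}$. In the example this gives $E=\frac14(1+\sqrt{21}+\sqrt{33}-\sqrt{77})=P-\sqrt{77}$, which is integral. For Case (1) in general, one way to finish is:
\begin{itemize}
\item Write $\mathcal O_{K_n}\otimes\Z_2=\bigotimes_r\Z_2[\omega_r]$ with $\omega_r=(1+\sqrt{D_{2^r}})/2$. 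Since $\{\omega_r,1-\omega_r\}$ is a basis of each factor, the conjugates of $P$ form a $\Z_2$-basis.
\item Consequently $2^{-n}\sum_S c_S\prod_{r\in S}\sqrt{D_{2^r}}$ is integral iff $(c_S)_S$ lies in the $\Z_2$-span $L$ of the rows of $A_n$, viewed as functions of $S$. Equivalently, $\sum_{V\subseteq U}(-1)^{|U\setminus V|}c_V\equiv 0\pmod{2^{|U|}}$ for all $U$.
\item $L$ is closed under pointwise products, since products of characters are characters.
\item With the normalization, $c_S=1/g_S^*=\prod_p(1/p^*)^{\lfloor k_p(S)/2\rfloor}$, where $k_p(S)=\#\{r\in S:p\mid D_{2^r}\}$.
\item Each factor has the form $S\mapsto u^{\lfloor |S\cap T|/2\rfloor}$ with $u\equiv 1\pmod 4$, and such a function lies in $L$. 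Indeed, the iterated differences of $k\mapsto u^{\lfloor k/2\rfloor}$ have generating function $(1+2w)/(1+2w-(u-1)w^2)$. This is a power series in $2w$ with $\Z_2$-coefficients when $4\mid u-1$.
\end{itemize}
For $u\equiv 3\pmod 4$ and $|T|\ge 2$, the second difference $u-1$ already fails the criterion; this is exactly the counterexample above.

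The sign choice is harmless for the rest of the paper: replacing $v_j$ by $\pm v_j$ leaves all the Gram matrices unchanged.
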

The following corollary provides the discriminant of the multiquadratic field $K_n$.
\begin{corollary}\cite[Corollary 2.2]{Man}\label{disc corollary Man}
    Assume $K_n, \ell, D_i$ with $1\leq i\leq \ell$, and $\mathcal{D}$ be as in the above theorem. Then the discriminant of the multiquadratic field $K_n$ is given by 
    $$ \Delta_{K_n} = (2^r \op{rad}(D_{2^0}D_{2^1}\cdots D_{2^{n-1}}))^{2^{n-1}} = 2^{2^{n-1}r}\prod\limits_{j=1}^{\ell}D_{j},  $$
    where $\op{rad}(k)$ denotes the radical of $k \in \Z$, and 
    \[
    r =
\begin{cases}
0 & \text{if } (D_{2^{n-2}}, D_{2^{n-1}}) \equiv (1,1) \pmod{4}, \\[6pt]
2 & \text{if } (D_{2^{n-2}}, D_{2^{n-1}}) \equiv (1,2) \text{ or } (1,3) \pmod{4}, \\[6pt]
3 & \text{if } (D_{2^{n-2}}, D_{2^{n-1}}) \equiv (2,3) \pmod{4}.
\end{cases}
    \]
\end{corollary}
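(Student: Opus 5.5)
The plan is to compute the discriminant directly from the integral bases of Theorem~\ref{thm:integral_basis}, one case at a time, comparing with the subfield lattice $\Z[\sqrt{D_1},\dots,\sqrt{D_\ell}]$.

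\textbf{Step 1: the reference order.} First I would compute the discriminant of the order $\mathcal{O}' := \Z\bigl[\sqrt{D_{2^0}},\dots,\sqrt{D_{2^{n-1}}}\bigr]$, or more conveniently of the $\Z$-span $L$ of $1,\sqrt{D_1},\dots,\sqrt{D_\ell}$.
\begin{itemize}
\item Since $\operatorname{Tr}_{K_n/\Q}(\sqrt{D_i}\sqrt{D_j}) = 0$ for $i \ne j$, the trace form is diagonal on this basis.
\item Its diagonal entries are $2^n D_j$, with $D_0 = 1$.
\item Hence $\operatorname{disc}(L) = 2^{n2^n}\prod_{j=1}^{\ell} D_j$.
\end{itemize}

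\textbf{Step 2: the index $[\mathcal{O}_{K_n}:L]$ in each case.} Next I would compute this index from the explicit bases. The change-of-basis matrix expressing each basis element in terms of the $\sqrt{D_j}$ has a clean form.
\begin{itemize}
\item In Case (1), the transition matrix from $\{\sigma(E)\}$ to $\{\sqrt{D_j}\}$ is $2^{-n}H$. Here $H$ is the $\pm1$ character table (Hadamard/Sylvester matrix) of $\F_2^n$, because $\sigma(\sqrt{D_j}) = \chi_j(\sigma)\sqrt{D_j}$.
\item Since $|\det H| = 2^{n2^{n-1}}$, the index is $2^{n2^n}/2^{n2^{n-1}} = 2^{n2^{n-1}}$.
\item Dividing $\operatorname{disc}(L)$ by the squared index gives $\Delta_{K_n} = \prod_j D_j$, so $r = 0$.
\end{itemize}
Cases (2) and (3) are block versions of the same computation. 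The transition matrix is block diagonal with character tables of $\F_2^{n-1}$, respectively $\F_2^{n-2}$, scaled by $2^{-(n-1)}$, respectively $2^{-(n-2)}$. In Case (3) there is an extra block for $E_4$, and a row reduction against $E_2$ and its conjugates reduces it to a triangular correction. Tracking the resulting powers of $2$ should give exponent $2^{n-1}r$ with $r = 2, 3$.

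\textbf{Step 3: the radical form.} Finally I would verify $\prod_j D_j = \operatorname{rad}(D_{2^0}\cdots D_{2^{n-1}})^{2^{n-1}}$. For each odd prime $p$ dividing some generator, the map $j \mapsto v_p(D_j) \in \{0,1\}$ is a nonzero linear functional on $\F_2^n$, so it equals $1$ for exactly $2^{n-1}$ indices $j$. The prime $2$ is handled by the same argument combined with the factor $2^r$.

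\textbf{Main obstacle.} I expect the hardest part to be the careful power-of-$2$ bookkeeping in Case (3), specifically the determinant contribution of the $E_4$ block. As a check, I would redo the count via the conductor–discriminant formula: $\Delta_{K_n} = \prod_j |d(\Q(\sqrt{D_j}))|$, where $d = D_j$ or $4D_j$. Counting how many $D_j$ are $\equiv 2,3 \pmod 4$ gives $2^{n-1}$ of them in Case (2), contributing $4^{2^{n-1}} = 2^{2\cdot 2^{n-1}}$. In Case (3) it gives $3\cdot 2^{n-2}$ of them, contributing $2^{3\cdot 2^{n-1}}$. This confirms $r = 2, 3$ and could replace Step 2 entirely.
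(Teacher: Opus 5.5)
Your proposal is correct. The paper gives no argument of its own here: it quotes the result from \cite[Corollary~2.2]{Man} and places it as a consequence of Theorem~\ref{thm:integral_basis}. Your Step~2 is therefore the natural derivation behind that citation. It is also equivalent to taking determinants of the Gram matrices $G_{\mathcal B_1},G_{\mathcal B_2},G_{\mathcal B_3}$ computed in the section on Gram matrices, since for a totally real field the trace-form Gram determinant of an integral basis is $\Delta_{K_n}$.

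The bookkeeping you left as ``should give'' does close:
\begin{itemize}
\item In Case (2), with $m=n-1$, the two diagonal blocks $2^{-m}A_m$ give index $2^{m2^m}$, so $\Delta_{K_n}=2^{n2^n-2m2^m}\prod_j D_j=2^{2\cdot 2^{n-1}}\prod_j D_j$.
\item In Case (3), with $m=n-2$, no row reduction is needed. Order the basis as $\alpha,\beta,\gamma,\delta$ and the radicals by the four blocks. The transition matrix is then block lower triangular, with diagonal blocks $2^{-m}A_m$ (three times) and $2^{-(m+1)}A_m$. So the index is $2^{(2m+1)2^m}$ and $\Delta_{K_n}=2^{(4m+8)2^m-(4m+2)2^m}\prod_jD_j=2^{3\cdot 2^{n-1}}\prod_jD_j$.
\end{itemize}

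Your conductor--discriminant check is a genuinely different and shorter route. It needs nothing from Chatelain's basis, only $d(\Q(\sqrt D))\in\{D,4D\}$ and the residues of the $D_j$ modulo $4$. Those residues are determined multiplicatively by the generators, because odd squares are $1 \bmod 8$ and at most one generator is even. That route also shows $r$ is intrinsic to $K_n$: $2^{n-2}r$ is exactly the number of quadratic subfields in which $2$ ramifies. The integral-basis route costs more bookkeeping, but it fits how the paper uses the result, since the same transition matrices feed the Gram matrix and shape computations.

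One small clarification in Step~3: the prime $2$ is handled by exactly the same linear-functional argument as the odd primes. If $2$ divides a generator, it divides exactly $2^{n-1}$ of the $D_j$. The factor $2^r$ plays no role in the identity $\prod_j D_j=\operatorname{rad}(D_{2^0}\cdots D_{2^{n-1}})^{2^{n-1}}$, since it appears identically on both sides.
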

\section{Explicit Construction of an Integral Basis}
\par In this short section, we formulate a convenient way to represent integral bases in all cases. The relevant notation will be helpful in describing the associated Gram matrices in the next section. 
\subsection*{Case 1: \texorpdfstring{$\mathcal{D}$ congruent to $\{1, \ldots, 1\} \pmod 4$}{Case 1}}
 In this case, there is an elegant way to track the action of the Galois group on the generators of $K_n$. Specifically, $\operatorname{Gal}(K_n/\Q) \cong \F_2^n$ is generated by automorphisms that flip the sign of exactly one fundamental generator $\sqrt{D_{2^k}}$ while fixing the rest. As a result, the signs of all the composite radicals $\sqrt{D_j}$ change accordingly.

Since there are $2^n$ Galois automorphisms acting on the $2^n$ basis elements $\{1, \sqrt{D_1}, \ldots, \sqrt{D_{\ell}}\}$, we can encode these simultaneous sign changes using a $2^n \times 2^n$ matrix with entries in $\{\pm 1\}$. We construct these matrices recursively as follows:

Let $A_0 = [1]$. For $n \ge 0$, we define the sequence of block matrices:
\[ 
    A_{n+1} = \begin{bmatrix}
        A_n & A_n \\
        A_n & -A_n
    \end{bmatrix}. 
\]

For example, $A_1, A_2$ are given as:
\[ 
    A_1 = \begin{bmatrix}
        1 &  1 \\
        1 & -1
    \end{bmatrix}, \qquad 
    A_2 = \begin{bmatrix}
        1 &  1 &  1 &  1 \\
        1 & -1 &  1 & -1 \\
        1 &  1 & -1 & -1 \\
        1 & -1 & -1 &  1
    \end{bmatrix}. 
\]

The explicit connection between the matrix $A_n$ and the Galois embeddings of $K_n$ is formalized by Chatelain:

\begin{lemma}[{\cite[Corollaire 1]{Chatelain}}] \label{lem:galois_matrix}
    Let $A_n = (a_{i,j})$ for $0 \leq i, j \leq \ell$, and let $D_0 = 1$. The embeddings $\sigma_i \in \operatorname{Gal}(K_n/\Q)$ can be ordered such that 
    \[
        a_{i,j} = \frac{\sigma_i(\sqrt{D_j})}{\sqrt{D_j}}.
    \]
\end{lemma}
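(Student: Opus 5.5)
We need to prove Lemma \ref{lem:galois_matrix}: there is an ordering $\sigma_0,\dots,\sigma_\ell$ of the Galois group such that $a_{i,j}=\sigma_i(\sqrt{D_j})/\sqrt{D_j}$.

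The plan is to index both the automorphisms and the radicals by binary expansions and then identify $A_n$ with a Sylvester–Hadamard (character) matrix. First, for $0\le i\le \ell$ write $i=\sum_k i_k2^k$ with $i_k\in\{0,1\}$. Since the $\sqrt{D_{2^k}}$ generate $K_n$ and $[K_n:\Q]=2^n$, there is a unique $\sigma_i\in\operatorname{Gal}(K_n/\Q)$ with
\[
\sigma_i\bigl(\sqrt{D_{2^k}}\bigr)=(-1)^{i_k}\sqrt{D_{2^k}},\qquad 0\le k\le n-1,
\]
and $i\mapsto\sigma_i$ is a bijection onto the Galois group.

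Next I compute $\sigma_i(\sqrt{D_j})/\sqrt{D_j}$. By the preceding lemma, $D_j$ is the squarefree part of $\prod_{r\in S_j}D_{2^r}$, where $S_j$ is the set of binary digits of $j$. Hence
\[
\sqrt{D_j}=\frac{\prod_{r\in S_j}\sqrt{D_{2^r}}}{q_j}
\]
for some positive rational $q_j$; inductively $q_j$ is a product of $\gcd$'s of the form $\gcd(D_i,D_{2^k})$, so the signs are consistent with the positive square roots. Since $\sigma_i$ fixes $q_j$,
\[
\frac{\sigma_i(\sqrt{D_j})}{\sqrt{D_j}}=\prod_{r\in S_j}(-1)^{i_r}=(-1)^{\sum_r i_rj_r}.
\]

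It then remains to show by induction on $n$ that $(A_n)_{i,j}=(-1)^{\sum_r i_rj_r}$ for $0\le i,j<2^n$. The case $n=0$ is trivial. For the inductive step, write $i=i'+2^n i_n$ and $j=j'+2^n j_n$ with $i',j'<2^n$. The block recursion says exactly that $(A_{n+1})_{i,j}=(A_n)_{i',j'}\cdot(-1)^{i_nj_n}$, since only the lower-right block carries the minus sign. This is precisely the extra factor contributed by the top binary digit.

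The only real subtlety is the sign bookkeeping in the second step: one must check that writing $\sqrt{D_j}$ as a positive rational multiple of the product of generator roots is legitimate. This holds because every quantity involved is a positive real number, so the ratio is $\pm1$ independently of any sign choices. The remaining steps are routine.
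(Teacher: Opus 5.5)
Your proof is correct. The paper does not prove this lemma itself. It cites Chatelain and only remarks informally that $\operatorname{Gal}(K_n/\Q)$ is generated by automorphisms flipping the sign of a single $\sqrt{D_{2^k}}$, so that the composite radicals $\sqrt{D_j}$ change sign accordingly.

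Your argument is the rigorous version of that remark. You index $\sigma_i$ by the binary digits of $i$, and use $\sqrt{D_j}\in\Q^\times\cdot\prod_{r\in S_j}\sqrt{D_{2^r}}$ to obtain the multiplicative sign $(-1)^{\sum_r i_rj_r}$. You then identify $A_n$ with the Sylvester--Hadamard matrix by induction on the block recursion. This is the same character description $\chi_i(\mathbf v_j)=(-1)^{\widetilde{\mathbf v_j\cdot \mathbf v_i}}$ that the paper adopts later in its density section.

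One small simplification: the ``sign subtlety'' you flag does not actually arise. For any $x$ with $x^2\in\Q^\times$, the ratio $\sigma(x)/x$ is unchanged when $x$ is replaced by $cx$ for any $c\in\Q^\times$, positive or negative. So no positivity of $q_j$, and no choice of square roots, is needed.
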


From its recursive definition, we can see that $A_n$ is symmetric. Furthermore, using induction on $n$, one can easily show that $A_n^2 = 2^n I_{2^n}$, where $I_{2^n}$ denotes the $2^n \times 2^n$ identity matrix. For proving this, suppose it holds for a given $n$, i.e., $A_n^2 = 2^n I_{2^n}$. Then we see that:
\begin{align*}
    A_{n+1}^2 &= \begin{bmatrix}
        A_n & A_n \\
        A_n & -A_n
    \end{bmatrix} 
    \begin{bmatrix}
        A_n & A_n \\
        A_n & -A_n
    \end{bmatrix} \\
    &= \begin{bmatrix}
        2A_n^2 & 0 \\
        0 & 2A_n^2
    \end{bmatrix} \\
    &= \begin{bmatrix}
        2(2^n I_{2^n}) & 0 \\
        0 & 2(2^n I_{2^n})
    \end{bmatrix} \\
    &= 2^{n+1} I_{2^{n+1}},
\end{align*}
which proves the identity. 

Using the above properties of $A_n$, we can rewrite the integral basis for $\mathcal{O}_{K_n}$.

\begin{lemma}[{\cite[Th\'{e}or\`{e}me 1(b)]{Chatelain}}] \label{lem:integral_basis_case1}
    Let $v \in (K_n)^{2^n}$ be the column vector defined by 
    \[ 
        v := \frac{1}{2^n}\bigl(1, \sqrt{D_1}, \ldots, \sqrt{D_{\ell}}\bigr)^T. 
    \]
    If we let $\alpha_j$ denote the $j$-th component of the vector $A_n v$ (for $0 \leq j \leq \ell$), then the set 
    \[ 
        \mathcal{B}_1 = \{\alpha_0, \alpha_1, \ldots, \alpha_{\ell}\} 
    \] 
    forms an integral basis for $\mathcal{O}_{K_n}$. Furthermore, the absolute discriminant of $K_n$ is given by
    \[ 
        \Delta(K_n) = \prod_{j=1}^{\ell} D_j. 
    \]
\end{lemma}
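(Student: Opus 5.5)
The plan is to reduce the statement to Theorem~\ref{thm:integral_basis}(1) and Corollary~\ref{disc corollary Man}, using the properties of $A_n$ established above. By Lemma~\ref{lem:galois_matrix}, if $\sigma_i$ denotes the $i$-th embedding in the chosen ordering, then
\[
\sigma_i(E)=\frac{1}{2^n}\sum_{j=0}^{\ell}\sigma_i(\sqrt{D_j})=\frac{1}{2^n}\sum_{j=0}^{\ell}a_{i,j}\sqrt{D_j}=(A_n v)_i=\alpha_i .
\]
This uses $D_0=1$ and $a_{i,0}=1$, the latter because every automorphism fixes $1$. Hence $\mathcal{B}_1=\{\sigma_0(E),\dots,\sigma_\ell(E)\}$, which is exactly the set of Galois conjugates of $E$ appearing in Theorem~\ref{thm:integral_basis}(1). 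One point needs care: the map $i\mapsto\sigma_i$ must be a bijection onto $\operatorname{Gal}(K_n/\Q)$. This holds because the rows of $A_n$ are pairwise distinct, since $A_n^2=2^nI$ shows $A_n$ is invertible. Consequently $\mathcal{B}_1$ has $2^n$ distinct elements and coincides with the basis of Theorem~\ref{thm:integral_basis}(1).

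For the discriminant I would give a direct computation rather than only quoting Corollary~\ref{disc corollary Man} with $r=0$; note that Case~(1) forces $r=0$. Write $M=(\sigma_i(\alpha_j))_{i,j}$. Then $\alpha_j=\sigma_j(E)$, so $\sigma_i(\alpha_j)=(\sigma_i\sigma_j)(E)$. Alternatively, use $\alpha=A_n v$ directly. Here $(\alpha_0,\dots,\alpha_\ell)^T=\tfrac{1}{2^n}A_n\,\mathrm{diag}(\sqrt{D_j})\,\mathbf 1$, and the matrix $(\sigma_i(\sqrt{D_j}))_{i,j}$ equals $A_n\,\mathrm{diag}(\sqrt{D_j})$. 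Expressing each $\alpha_k$ in the $\Q$-basis $\{\sqrt{D_j}\}$ via the coefficient matrix $\tfrac1{2^n}A_n$ gives
\[
M=A_n\,\mathrm{diag}(\sqrt{D_0},\dots,\sqrt{D_\ell})\,\tfrac{1}{2^n}A_n^{T}.
\]
From this,
\[
\det(M)^2=\det(A_n)^4\,2^{-2n2^n}\prod_j D_j .
\]
From $A_n^2=2^nI$ we get $\det(A_n)^2=2^{n2^n}$, so $\det(A_n)^4=2^{2n2^n}$. The powers of $2$ therefore cancel, leaving $\Delta=\prod_{j=1}^\ell D_j$, and this is positive since the field is totally real.

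The only real subtlety is the indexing in the first step. We must check that the ordering of embeddings in Lemma~\ref{lem:galois_matrix} is consistent, meaning that the row index $i$ of $A_n$ labels an automorphism and the column index $j$ labels the radical $\sqrt{D_j}$. The binary recursion makes this work: $\sigma_i$ flips the sign of $\sqrt{D_{2^k}}$ according to the bits of $i$. Then $a_{i,j}=(-1)^{\langle i,j\rangle}$, where $\langle i,j\rangle$ is the bitwise dot product, and one verifies by induction that this agrees with the block recursion for $A_{n+1}$. Integrality and the basis property themselves are taken from Theorem~\ref{thm:integral_basis}(1), so no further arithmetic input is needed.
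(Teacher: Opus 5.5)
Your proposal is correct and follows essentially the paper's route. The paper gives no independent argument and simply attributes the lemma to Chatelain: it is Theorem~\ref{thm:integral_basis}(1) rewritten via Lemma~\ref{lem:galois_matrix}, together with Corollary~\ref{disc corollary Man} at $r=0$. Your identification $\alpha_i=\sigma_i(E)$ and your direct computation $\det(M)^2=\prod_j D_j$, from $M=\tfrac{1}{2^n}A_n\,\mathrm{diag}(\sqrt{D_j})A_n^{T}$ and $A_n^2=2^nI$, make that repackaging explicit.

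One caveat, which the paper shares and which is not a flaw in your reduction: the integrality of $E$ that you import depends on a sign normalization of the composite radicals $\sqrt{D_j}$. In $\Q(\sqrt{21},\sqrt{33})$ with all roots positive, $E=\tfrac14(1+\sqrt{21}+\sqrt{33}+\sqrt{77})$ has norm $-41/16$, so it is not integral, whereas flipping the sign of $\sqrt{77}$ makes it integral. The real subtlety is therefore this choice of signs rather than the indexing, although your discriminant computation is insensitive to it.
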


As we are in Case 1, every $D_j \equiv 1 \pmod 4$. Consequently, the discriminant $\Delta(K_n)$ is strictly an odd integer, which provides immediate confirmation that the prime $2$ is unramified in the extension $K_n/\Q$.

To compute the shape of $K_n$, we choose an integral basis that contains the element $1$. The shape is defined by projecting the Minkowski embedding of $\mathcal{O}_{K_n}$ orthogonally to the vector $J(1) = (1, 1, \ldots, 1)$, so having $1$ in our basis  simplifies this projection. We can obtain this by making a unimodular modification to the basis $\mathcal{B}_1$. Recall that $\alpha_j$ is the $j$-th component of the vector $A_n v$. If we sum all the elements of $\mathcal{B}_1$, it follows that
\[
    \sum_{j=0}^{\ell} \alpha_j = 1.
\]
This identity holds because the sum of the entries in the first column of the matrix $A_n$ is $2^n$ (which cancels the factor of $1/2^n$ in $v$), while the sum of the entries in any other column of $A_n$ is exactly $0$. 

Using this, we can write $\alpha_0$ as:
\[
    \alpha_0 = 1 - \sum_{j=1}^{\ell} \alpha_j.
\]
As $\alpha_0$ can be expressed as an integer combination of $1$ and the other $\alpha_j$ elements; replacing $\alpha_0$ with $1$ gives a change of basis with determinant $\pm 1$. Thus, we can define our refined integral basis for $\mathcal{O}_{K_n}$ as:
\[
    \mathcal{B}_1' := \{1, \alpha_1, \ldots, \alpha_{\ell}\}.
\]

\subsection*{Case 2: \texorpdfstring{$\mathcal{D}$ congruent to $\{1, \ldots, 1, 2\}$ or $\{1, \ldots, 1, 3\} \pmod 4$}{Case 2}}

In this case, exactly one fundamental generator is not congruent to $1 \pmod 4$. As established in Theorem \ref{thm:integral_basis}, we assume without loss of generality that this distinguished generator is $D_{2^{n-1}}$. The prime $2$ ramifies in $K_n/\Q$, which changes the structure of the ring of integers. 

Let $m = n - 1$. The integral basis for $\mathcal{O}_{K_n}$ in this case is generated by the action of the subgroup $H = \operatorname{Gal}\bigl(K_n/\Q(\sqrt{D_{2^m}})\bigr)$ on two specific elements, $E_1$ and $E_2$. The subgroup $H$ is isomorphic to $\F_2^m$ and acts exclusively by changing the signs of the fundamental generators $\sqrt{D_{2^0}}, \ldots, \sqrt{D_{2^{m-1}}}$. Let $L = 2^m - 1$. Note that the terms comprising $E_1$ are exactly the square roots of the subfield generators $D_0, \ldots, D_L$ (with our standard convention $D_0 = 1$), while the terms comprising $E_2$ correspond to the remaining half of the subfield generators, which can be indexed as $D_{2^m}, D_{2^m+1}, \ldots, D_{2^m+L}$. Since $H$ acts on the unramified generators exactly as the full Galois group acts in a multiquadratic field of degree $2^m$, we can  track these sign changes using the $2^m \times 2^m$ matrix $A_m$, which was defined recursively in the previous case. 

Let us define two column vectors of length $2^m$ in $(K_n)^{2^m}$:
\begin{align*}
    v^{(1)} &:= \frac{1}{2^m}\bigl(1, \sqrt{D_1}, \ldots, \sqrt{D_L}\bigr)^T, \\
    v^{(2)} &:= \frac{1}{2^m}\bigl(\sqrt{D_{2^m}}, \sqrt{D_{2^m+1}}, \ldots, \sqrt{D_{2^m+L}}\bigr)^T.
\end{align*}

We define the elements $\alpha_j$ and $\beta_j$ (for $0 \leq j \leq L$) as the $j$-th components of the matrix-vector products $A_m v^{(1)}$ and $A_m v^{(2)}$, respectively:
\[
    \begin{bmatrix} \alpha_0 \\ \alpha_1 \\ \vdots \\ \alpha_L \end{bmatrix} = A_m v^{(1)} 
    \qquad \text{and} \qquad 
    \begin{bmatrix} \beta_0 \\ \beta_1 \\ \vdots \\ \beta_L \end{bmatrix} = A_m v^{(2)}.
\]
The sets $\{\alpha_j\}_{j=0}^L$ and $\{\beta_j\}_{j=0}^L$ are precisely the sets of Galois conjugates of $E_1$ and $E_2$ under $H$. Therefore, an  integral basis for $K_n$ is given by the union:
\[
    \mathcal{B}_2 = \{\alpha_0, \ldots, \alpha_L\} \cup \{\beta_0, \ldots, \beta_L\}.
\]

\subsubsection*{Refinement of the Basis}

Just as in Case 1, computing the shape of the Minkowski lattice $J(\mathcal{O}_{K_n})$ requires projecting orthogonally onto the subspace $J(1)^\perp$. To make this projection tractable, we modify our basis so that $1$ is explicitly included. 

We observe that the elements $\alpha_j$ behave identically to the integral basis elements of an unramified degree $2^m$ extension. Since the first column of the matrix $A_m$ consists entirely of $1$s, and the remaining columns sum to zero, summing the components of $A_m v^{(1)}$ yields:
\[
    \sum_{j=0}^L \alpha_j = 1.
\]
So, we see that:
\[
    \alpha_0 = 1 - \sum_{j=1}^L \alpha_j.
\]
As $\alpha_0$ can be written as an integer linear combination of $1$ and the elements $\alpha_1, \ldots, \alpha_L$, replacing $\alpha_0$ with $1$ is a unimodular transformation (a change of basis with determinant $\pm 1$). 

Therefore, we can write our refined integral basis for $\mathcal{O}_{K_n}$ in Case 2 as:
\[
    \mathcal{B}_2' := \{1, \alpha_1, \ldots, \alpha_L, \beta_0, \beta_1, \ldots, \beta_L\}.
\]

\subsection*{Case 3: \texorpdfstring{$\mathcal{D} \equiv \{1, \ldots, 1, 2, 3\} \pmod 4$}{Case 3}}

In this final case, exactly two fundamental generators are not congruent to $1 \pmod 4$. As  in Theorem \ref{thm:integral_basis}, we assume without loss of generality that these are $D_{2^{n-2}} \equiv 2 \pmod 4$ and $D_{2^{n-1}} \equiv 3 \pmod 4$. Let $m:= n - 2$. The integral basis is generated by the action of the subgroup 
\[
    H = \operatorname{Gal}\bigl(K_n/\Q(\sqrt{D_{2^m}}, \sqrt{D_{2^{m+1}}})\bigr)
\]
on four specific elements: $E_1, E_2, E_3,$ and $E_4$. The subgroup $H \cong \F_2^m$ has order $2^m$ and acts  by changing the signs of the $m$ fundamental generators $\sqrt{D_{2^0}}, \ldots, \sqrt{D_{2^{m-1}}}$. 
Letting $L = 2^m - 1$, we define four column vectors in $(K_n)^{2^m}$:
\begin{align*}
    v^{(1)} &:= \frac{1}{2^m}\bigl(1, \sqrt{D_1}, \ldots, \sqrt{D_L}\bigr)^T, \\
    v^{(2)} &:= \frac{1}{2^m}\bigl(\sqrt{D_{2^m}}, \sqrt{D_{2^m+1}}, \ldots, \sqrt{D_{2^m+L}}\bigr)^T, \\
    v^{(3)} &:= \frac{1}{2^m}\bigl(\sqrt{D_{2^{m+1}}}, \sqrt{D_{2^{m+1}+1}}, \ldots, \sqrt{D_{2^{m+1}+L}}\bigr)^T, \\
    v^{(4)} &:= \frac{1}{2^m}\bigl(\sqrt{D_{3\cdot 2^m}}, \sqrt{D_{3\cdot 2^m+1}}, \ldots, \sqrt{D_{3\cdot 2^m+L}}\bigr)^T.
\end{align*}

As $H$ acts on these disjoint blocks exactly as the full Galois group acts on a purely unramified multiquadratic field of degree $2^m$, we can   track the simultaneous sign changes using the $2^m \times 2^m$ symmetric transition matrix $A_m$.

We define the intermediate sets of elements $\alpha_j, \beta_j, \gamma_j,$ and $\eta_j$ (for $0 \leq j \leq L$) as the components of the matrix-vector products:
\[
    \begin{bmatrix} \alpha_0 \\ \vdots \\ \alpha_L \end{bmatrix} = A_m v^{(1)}, \quad
    \begin{bmatrix} \beta_0 \\ \vdots \\ \beta_L \end{bmatrix} = A_m v^{(2)}, \quad
    \begin{bmatrix} \gamma_0 \\ \vdots \\ \gamma_L \end{bmatrix} = A_m v^{(3)}, \quad
    \begin{bmatrix} \eta_0 \\ \vdots \\ \eta_L \end{bmatrix} = A_m v^{(4)}.
\]

By directly comparing these definitions to the elements provided in Theorem \ref{thm:integral_basis}, we see that the Galois conjugates of $E_1, E_2,$ and $E_3$ are exactly the elements $\alpha_j, \beta_j,$ and $\gamma_j$, respectively. 

However, the element $E_4$ has a denominator of $2^{n-1} = 2^{m+1}$ and its sum spans both the second and fourth blocks of radicals. Notice that the elements $\beta_0$ and $\eta_0$ (which correspond to the first row of $A_m$) are exactly the sums of the entries in $v^{(2)}$ and $v^{(4)}$, respectively. Therefore, $E_4$ can be expressed algebraically as:$$    E_4 = \frac{1}{2} (\beta_0 + \eta_0).$$Consequently, applying the subgroup $H$ to generate the Galois conjugates of $E_4$ simply yields the averages of our intermediate conjugate pairs. We define these conjugates as $\delta_j$:
\[
    \delta_j := \frac{1}{2}(\beta_j + \eta_j) \quad \text{for } 0 \leq j \leq L.
\]
Therefore, an  integral basis for $\mathcal{O}_{K_n}$ in Case 3 is given by the union of these four sets:
\[
    \mathcal{B}_3 = \{\alpha_0, \ldots, \alpha_L\} \cup \{\beta_0, \ldots, \beta_L\} \cup \{\gamma_0, \ldots, \gamma_L\} \cup \{\delta_0, \ldots, \delta_L\}.
\]

\subsubsection*{Refinement of the Basis}

As we know that computing the shape of the Minkowski lattice requires projecting the lattice orthogonally onto $J(1)^\perp$. For this, our basis should contain $1$. 

The elements $\alpha_j$ are constructed from the block $v^{(1)}$, which is identical in form to the unramified basis structure of degree $2^m$. Since the first column of $A_m$ is entirely $1$s and the remaining columns sum to zero, summing the components of $A_m v^{(1)}$ yields:
\[
    \sum_{j=0}^L \alpha_j = 1.
\]
So, we have
\[
    \alpha_0 = 1 - \sum_{j=1}^L \alpha_j.
\]
As $\alpha_0$ is an integer linear combination of $1$ and the elements $\alpha_1, \ldots, \alpha_L$, replacing $\alpha_0$ with $1$ is a strictly unimodular transformation (a change of basis with determinant $\pm 1$). 

Thus, we can write our refined integral basis for $\mathcal{O}_{K_n}$ in Case 3 as:
\[
    \mathcal{B}_3' := \{1, \alpha_1, \ldots, \alpha_L, \beta_0, \ldots, \beta_L, \gamma_0, \ldots, \gamma_L, \delta_0, \ldots, \delta_L\}.
\]
\section{The Gram Matrices and Shapes of $K_n$}
\par In this section, we describe the shapes of totally real multiquadratic number fields.
\subsection{Case 1: \texorpdfstring{$\mathcal{D} \equiv \{1, 1, \ldots, 1\} \pmod 4$}{Case 1}} 
\subsubsection*{The Gram Matrix} Recall that $\mathcal{B}_1' := \{1, \alpha_1, \ldots, \alpha_{\ell}\}$ is the modified integral basis, where $\alpha_i$'s are as in Lemma \ref{lem:integral_basis_case1}. For computing the shape of $K_n$, we first compute the Gram matrix of the lattice, which requires us to determine the inner products of our basis elements under the canonical Minkowski embedding $J : K_n \hookrightarrow \R^{2^n}$. Let us formally define the embedded vectors:
\[
    v_j := J(\sqrt{D_j}) \quad \text{for } 0 \leq j \leq \ell,
\]
where we adopt the convention that $D_0 = 1$ (so that $v_0 = J(1)$).

\begin{lemma} \label{lem:orthogonal_radicals}
    With the definitions above, the vectors $v_j$ are mutually orthogonal in $\R^{2^n}$. Moreover, their squared Euclidean norms are given by
    \[
        |v_j|^2 = \langle v_j, v_j \rangle = 2^n D_j.
    \]
\end{lemma}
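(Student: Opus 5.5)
The plan is to use the remark that for totally real $K_n$ the Minkowski inner product is the trace form, so that
\[
\langle v_i, v_j\rangle = \op{Tr}_{K_n/\Q}\bigl(\sqrt{D_i}\sqrt{D_j}\bigr).
\]
The whole statement then reduces to computing traces of products of radicals. Alternatively, one can compute the coordinates of each $v_j$ directly: by Lemma~\ref{lem:galois_matrix}, the $i$-th coordinate of $v_j$ is $\sigma_i(\sqrt{D_j}) = a_{i,j}\sqrt{D_j}$. Hence $v_j = \sqrt{D_j}\, c_j$, where $c_j$ denotes the $j$-th column of $A_n$.

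With this description, orthogonality and the norm formula both come from the identity $A_n^2 = 2^n I_{2^n}$ established above, combined with the symmetry of $A_n$. Since $A_n = A_n^T$, we get $A_n^T A_n = 2^n I$. In other words, the columns $c_0, \dots, c_\ell$ are mutually orthogonal and each has squared norm $2^n$. It follows that for $i \neq j$,
\[
\langle v_i, v_j\rangle = \sqrt{D_i D_j}\,\langle c_i, c_j\rangle = 0,
\]
while $\langle v_j, v_j\rangle = D_j \langle c_j, c_j\rangle = 2^n D_j$. The case $j = 0$ is consistent with this: $v_0 = J(1)$ has squared norm $2^n$.

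The only point requiring care is Lemma~\ref{lem:galois_matrix} itself, which we are allowed to assume. As a sanity check independent of it, one can argue via traces. For $i \neq j$, the product $\sqrt{D_i D_j}$ is a rational multiple of $\sqrt{D_k}$ with $D_k \neq 1$, where $k$ corresponds to the symmetric difference of the index subsets. Some automorphism negates $\sqrt{D_k}$, so its trace vanishes. For $i = j$, the trace of $D_j$ is $2^n D_j$. This trace argument does not depend on the congruence case, which is consistent with the statement being purely about radicals.

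I expect no real obstacle here. The one bookkeeping point is to confirm that the row and column indexing of $A_n$ matches the ordering of the embeddings in Lemma~\ref{lem:galois_matrix}, so that $v_j$ is indeed $\sqrt{D_j}$ times a column of $A_n$ rather than a row; since $A_n$ is symmetric, this causes no difficulty either way.
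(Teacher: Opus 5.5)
Your proof is correct, but your main argument takes a different route from the paper's.

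The paper works with the trace form throughout. It computes $|v_j|^2=\op{Tr}_{K_n/\Q}(D_j)=2^nD_j$. For $i\neq j$ it rewrites $\sqrt{D_i}\sqrt{D_j}=\gcd(D_i,D_j)\sqrt{D_k}$ with $k\ge 1$. It then shows $\op{Tr}_{K_n/\Q}(\sqrt{D_k})=\sqrt{D_k}\sum_m a_{m,k}=0$, using only that every column of $A_n$ other than the first sums to zero. That is essentially your ``sanity check''; you justify the vanishing by an automorphism negating $\sqrt{D_k}$ rather than by column sums.

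Your primary argument instead reads off the coordinates $v_j=\sqrt{D_j}\,c_j$ from Lemma~\ref{lem:galois_matrix}. You then get orthogonality and the norms together from the identity $A_n^TA_n=A_n^2=2^nI_{2^n}$ already proved in the text. This is a one-line matrix computation. It avoids identifying $\sqrt{D_i}\sqrt{D_j}$ as a multiple of another radical and checking that $D_k\neq 1$. The cost is that it relies fully on the consistent sign normalization in Lemma~\ref{lem:galois_matrix}, which encodes the same multiplicativity of $j\mapsto\sigma_i(\sqrt{D_j})/\sqrt{D_j}$ in disguise.

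The trace route is more intrinsic: in your automorphism version it needs neither the matrix $A_n$ nor any choice of coordinates. Your bookkeeping worry about the order of embeddings is harmless anyway, since the Minkowski inner product is a sum over all embeddings and is unchanged by permuting coordinates.
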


\begin{proof}
    By the definition of the Minkowski inner product, we see that:
    \[
        |v_j|^2 = \langle v_j, v_j \rangle = \operatorname{Tr}_{K_n/\Q}(D_j) = \sum_{k=0}^{\ell} D_j = 2^n D_j.
    \]
    Now suppose that $i \neq j$. The product $\sqrt{D_i}\sqrt{D_j}$ can be written as $c \sqrt{D_k}$ for some integer $c = \gcd(D_i, D_j)$ and some subfield index $k \ge 1$. It follows that:
    \begin{align*}
        \langle v_i, v_j \rangle &= \operatorname{Tr}_{K_n/\Q}(\sqrt{D_i}\sqrt{D_j}) \\
        &= c \operatorname{Tr}_{K_n/\Q}(\sqrt{D_k}) \\
        &= c \sqrt{D_k} \sum_{m=0}^{\ell} a_{m,k} = 0,
    \end{align*}
    where the final equality holds because the sum of the Galois signs in any column $k \ge 1$ of the matrix $A_n$ is zero.
\end{proof}

Since the generators $D_j$ are squarefree integers, $D_j \ge 2$ for all $1 \leq j \leq \ell$. Recalling our convention that $D_0 = 1$, the lemma immediately implies that:
\[ 
    |v_0| \leq |v_j| \quad \text{for } 1 \leq j \leq \ell.
\]

Let $\mathcal{B}_v = \{v_0, v_1, \ldots, v_\ell\}$ denote the orthogonal $\mathbb{Q}$-basis of $K_n$. The Gram matrix with respect to this basis, denoted $G_v$, is simply the diagonal matrix of the squared norms. By Lemma \ref{lem:orthogonal_radicals}, we have:
$$    G_v = \begin{bmatrix}
        |v_0|^2 & 0 & \cdots & 0 \\
        0 & |v_1|^2 & \cdots & 0 \\
        \vdots & \vdots & \ddots & \vdots \\
        0 & 0 & \cdots & |v_{\ell}|^2
    \end{bmatrix} 
    = 2^n \begin{bmatrix}
        1 & 0 & \cdots & 0 \\
        0 & D_1 & \cdots & 0 \\
        \vdots & \vdots & \ddots & \vdots \\
        0 & 0 & \cdots & D_{\ell}
    \end{bmatrix}.$$

\vspace{0.3cm}

Recall that the integral basis $\mathcal{B}_1$ is defined via the matrix equation:$$    \begin{bmatrix}
        \alpha_0 \\ \alpha_1 \\ \vdots \\ \alpha_{\ell}
    \end{bmatrix} 
    = \frac{1}{2^n} A_n \begin{bmatrix}
        v_0 \\ v_1 \\ \vdots \\ v_{\ell}
    \end{bmatrix}.$$Therefore, the transition matrix from the orthogonal basis $\mathcal{B}_v$ to the integral basis $\mathcal{B}_1$ is given by $\frac{1}{2^n} A_n$. Consequently, the Gram matrix with respect to $\mathcal{B}_1$ is obtained via the congruent transformation:$$    G_{\mathcal{B}_1} = \left(\frac{1}{2^n} A_n\right) G_v \left(\frac{1}{2^n} A_n\right)^T.$$Substituting our diagonal expression for $G_v$, and using the fact that $A_n$ is a symmetric matrix, the precise formulation for the Gram matrix of $\mathcal{O}_{K_n}$ is given by:
\[ 
    G_{\mathcal{B}_1} = \frac{1}{2^n} A_n \begin{bmatrix}
        1 & 0 & \cdots & 0 \\
        0 & D_1 & \cdots & 0 \\
        \vdots & \vdots & \ddots & \vdots \\
        0 & 0 & \cdots & D_{\ell}
    \end{bmatrix} A_n.
\]

\subsubsection*{Shape of \texorpdfstring{$K_n$}{Kn}}

With the Gram matrix of $\mathcal{O}_{K_n}$ (explicitly determined above), we are now in a position to compute the shape of the multiquadratic field $K_n$. Recall that the shape of a number field, denoted $\operatorname{sh}(K_n)$, is formally defined as the similarity class (up to rotation and uniform scaling) of the lattice obtained by projecting the Minkowski embedding of its ring of integers orthogonally to the one-dimensional subspace spanned by $v_0 = J(1)$. 

Since our modified integral basis $\mathcal{B}_1' = \{1, \alpha_1, \ldots, \alpha_{\ell}\}$ explicitly contains $1$, this projection becomes highly tractable. Let $\Lambda$ denote the lattice $J(\mathcal{O}_{K_n})$ in $\R^{2^n}$, and let $\Lambda^\perp$ be its orthogonal projection onto $v_0^\perp$. The lattice $\Lambda^\perp$ is generated by the following basis elements:
\[
    \mathcal{B}^\perp = \{\alpha_1^\perp, \ldots, \alpha_{\ell}^\perp\},
\]
where $\alpha_j^\perp = \alpha_j - \frac{\langle \alpha_j, v_0 \rangle}{|v_0|^2} v_0$ represents the orthogonal projection of $\alpha_j$. 

To compute $\alpha_j^\perp$ explicitly, we recall the definition of $\alpha_j$ from the transition matrix $A_n = (a_{i,k})$. As the first column of $A_n$ consists entirely of $1$s, we have $a_{j,0} = 1$ for all $j$. Thus, expanding $\alpha_j$ yields:
\[
    \alpha_j = \frac{1}{2^n} \sum_{k=0}^{\ell} a_{j,k} v_k = \frac{1}{2^n} v_0 + \frac{1}{2^n} \sum_{k=1}^{\ell} a_{j,k} v_k.
\]
Since the underlying vectors $v_1, \ldots, v_{\ell}$ are orthogonal to $v_0$ (by Lemma \ref{lem:orthogonal_radicals}), it follows that:
\[
    \langle \alpha_j, v_0 \rangle = \left\langle \frac{1}{2^n} v_0, v_0 \right\rangle = \frac{1}{2^n} |v_0|^2.
\]
Hence, we have
\[
    \alpha_j^\perp = \frac{1}{2^n} \sum_{k=1}^{\ell} a_{j,k} v_k \quad \text{for } 1 \leq j \leq \ell.
\]

Thus, we see that the projected basis elements are purely linear combinations of the mutually orthogonal vectors $v_1, \ldots, v_{\ell}$ with coefficients $\pm 1/2^n$. We can now  compute the entries of the projected Gram matrix $G_{\Lambda^\perp} = [\langle \alpha_i^\perp, \alpha_j^\perp \rangle]$. Using the orthogonality of the $v_k$, we obtain:
\begin{align*}
    \langle \alpha_i^\perp, \alpha_j^\perp \rangle &= \frac{1}{2^{2n}} \sum_{k=1}^{\ell} a_{i,k} a_{j,k} |v_k|^2 \\
    &= \frac{1}{2^{2n}} \sum_{k=1}^{\ell} a_{i,k} a_{j,k} (2^n D_k) \\
    &= \frac{1}{2^n} \sum_{k=1}^{\ell} a_{i,k} a_{j,k} D_k.
\end{align*}

Let $\tilde{A}_n$ be the $\ell \times \ell$ submatrix obtained by deleting the first row and first column of $A_n$. The above calculation shows that the Gram matrix of $\Lambda^\perp$ is given by:
\[
    G_{\Lambda^\perp} = \frac{1}{2^n} \tilde{A}_n \operatorname{diag}(D_1, \ldots, D_{\ell}) \tilde{A}_n^T.
\]
Normalizing the lattice by $1/|v_1|^2$ naturally introduces the continuous parameters:
\[
    \lambda_j := \frac{|v_j|^2}{|v_1|^2} = \frac{D_j}{D_1} \quad \text{for } 2 \leq j \leq \ell.
\]
The shape of the multiquadratic field $K_n$ is entirely parameterized by these ratios. Following the framework for primitive orthorhombic lattices, the shape of $K_n$ can be concisely represented as:
\[
    \operatorname{sh}(K_n) = \operatorname{sh}(\lambda_2, \ldots, \lambda_{\ell}).
\]
Note that we may always choose a permutation of $D_1, \dots, D_\ell$ such that 
\[1\leq \lambda_{2}\leq \lambda_3\leq  \ldots \leq \lambda_{\ell},\] and the shape $\operatorname{sh}(K_n)$ remains unchanged.
\subsection{Case 2: \texorpdfstring{$\mathcal{D}$ congruent to $\{1, \ldots, 1, 2\}$ or $\{1, \ldots, 1, 3\} \pmod 4$}{Case 2}}
Although we shall not in this article treat the distribution of shapes in Cases 2 and 3, we describe the Gram matrices.
\subsubsection*{The Gram Matrix}

To compute the Gram matrix of $\mathcal{O}_{K_n}$ in this case, we first determine the inner products of the basis $\mathcal{B}_2 = \{\alpha_0, \ldots, \alpha_L\} \cup \{\beta_0, \ldots, \beta_L\}$. 

Recall from Lemma \ref{lem:orthogonal_radicals} that the underlying embedded vectors $v_k = J(\sqrt{D_k})$ are mutually orthogonal in Minkowski space, and $|v_k|^2 = 2^n D_k$. Since our basis elements are partitioned into the unramified block (the $\alpha_j$ constructed from $v_0, \ldots, v_L$) and the ramified block (the $\beta_j$ constructed from $v_{2^m}, \ldots, v_{2^m+L}$), and as these two sets of vectors are disjoint, it immediately follows that every $\alpha_i$ is  orthogonal to every $\beta_j$:
\[
    \langle \alpha_i, \beta_j \rangle = 0 \quad \text{for all } 0 \leq i, j \leq L.
\]
Consequently, the Gram matrix $G_{\mathcal{B}_2}$  splits into a block-diagonal matrix:
\[
    G_{\mathcal{B}_2} = \begin{bmatrix} G_\alpha & 0 \\ 0 & G_\beta \end{bmatrix}.
\]

Let us compute the entries of the $G_\alpha$ block. Using the transition matrix $A_m = (a_{i,k})$, we have $\alpha_i = \frac{1}{2^m} \sum_{k=0}^L a_{i,k} v_k$. Therefore, we see that:
\begin{align*}
    \langle \alpha_i, \alpha_j \rangle &= \frac{1}{2^{2m}} \sum_{k=0}^L a_{i,k} a_{j,k} |v_k|^2 \\
    &= \frac{1}{2^{2m}} \sum_{k=0}^L a_{i,k} a_{j,k} (2^n D_k).
\end{align*}
Since $m = n - 1$, we have $2m = 2n - 2$. So, it follows that $\frac{2^n}{2^{2n-2}} = \frac{1}{2^{n-2}}$. Thus, the Gram matrix for the unramified block is given by:
\[
    G_\alpha = \frac{1}{2^{n-2}} A_m \operatorname{diag}(1, D_1, \ldots, D_L) A_m^T.
\]
By a similar calculation over the ramified indices, the Gram matrix for the $\beta$ block is:
\[
    G_\beta = \frac{1}{2^{n-2}} A_m \operatorname{diag}(D_{2^m}, D_{2^m+1}, \ldots, D_{2^m+L}) A_m^T.
\]

\subsubsection*{Shape of \texorpdfstring{$K_n$}{Kn}}

We now project the Minkowski embedding of $\mathcal{O}_{K_n}$ orthogonally onto the subspace $v_0^\perp$ to determine its shape. Using our refined integral basis $\mathcal{B}_2' = \{1, \alpha_1, \ldots, \alpha_L, \beta_0, \ldots, \beta_L\}$, the projected lattice $\Lambda^\perp$ is generated by the basis:
\[
    \mathcal{B}^\perp = \{\alpha_1^\perp, \ldots, \alpha_L^\perp\} \cup \{\beta_0^\perp, \ldots, \beta_L^\perp\}.
\]

Since the elements $\alpha_j$ identically mirror the structure of Case 1 (but in dimension $2^m$), the projection simply removes their $v_0$ component. Precisely, it follows that:
\[
    \alpha_j^\perp = \alpha_j - \frac{\langle \alpha_j, v_0 \rangle}{|v_0|^2} v_0 = \frac{1}{2^m} \sum_{k=1}^L a_{j,k} v_k \quad \text{for } 1 \leq j \leq L.
\]
Conversely, consider the elements $\beta_j$. Since each is a linear combination of the vectors $v_{2^m}, \ldots, v_{2^m+L}$, all of which are orthogonal to $v_0$, the inner product $\langle \beta_j, v_0 \rangle$ is identically zero. Thus, the elements $\beta_j$ are completely unaffected by the orthogonal projection, and hence we have:
\[
    \beta_j^\perp = \beta_j \quad \text{for } 0 \leq j \leq L.
\]

This confirms that the total dimension of the projected lattice is exactly $L + (L + 1) = 2L + 1 = 2(2^m - 1) + 1 = 2^{m+1} - 1 = 2^n - 1 = \ell$.

The Gram matrix $G_{\Lambda^\perp}$ of the projected lattice retains its exact block-diagonal structure. Let $\tilde{A}_m$ be the $L \times L$ submatrix obtained by deleting the first row and first column of $A_m$. The full projected Gram matrix is:
\[
    G_{\Lambda^\perp} = \frac{1}{2^{n-2}} 
    \begin{bmatrix}
        \tilde{A}_m \operatorname{diag}(D_1, \ldots, D_L) \tilde{A}_m^T & 0 \\
        0 & A_m \operatorname{diag}(D_{2^m}, \ldots, D_{2^m+L}) A_m^T
    \end{bmatrix}.
\]

To parameterize this geometry, we normalize the projected lattice by $1/|v_1|^2$ to extract the continuous parameters:   $ \lambda_j := \frac{D_j}{D_1} \quad \text{for } 2 \leq j \leq \ell.$
Consequently, the shape of $K_n$ in Case 2 is  determined by the parameters $\lambda_2, \ldots, \lambda_\ell$ and once again is represented by $\op{sh}(\lambda_2, \dots, \lambda_\ell)$.

\subsection{Case 3: \texorpdfstring{$\mathcal{D} \equiv \{1, \ldots, 1, 2, 3\} \pmod 4$}{Case 3}}

\subsubsection*{The Gram Matrix}

To compute the Gram matrix $G_{\mathcal{B}_3}$ of the ring of integers $\mathcal{O}_{K_n}$, we first determine the inner products of the refined basis $\mathcal{B}_3' = \{1, \alpha_1, \ldots, \alpha_L, \beta_0, \ldots, \beta_L, \gamma_0, \ldots, \gamma_L, \delta_0, \ldots, \delta_L\}$. Recall that $v_k = J(\sqrt{D_k})$ are mutually orthogonal in Minkowski space with  $|v_k|^2 = 2^n D_k$. Our basis is constructed from four mutually disjoint blocks of these vectors ($v^{(1)}, v^{(2)}, v^{(3)}, v^{(4)}$). Consequently, any basis element constructed exclusively from one block is perfectly orthogonal to an element constructed from a different block. Specifically, the unramified $\alpha$ elements and the $\gamma$ elements are orthogonal to everything outside their respective blocks. The Gram matrix $G_{\mathcal{B}_3}$ decomposes into a block-diagonal structure containing two independent blocks and one coupled block:
\[
    G_{\mathcal{B}_3} = 
    \begin{bmatrix} 
        G_\alpha & 0 & 0 \\ 
        0 & G_\gamma & 0 \\ 
        0 & 0 & G_{\beta, \delta} 
    \end{bmatrix}.
\]

 Using the transition matrix $A_m = (a_{i,k})$, we now determine the inner product for elements within the same block (e.g., the $\alpha$ block):
\begin{align*}
    \langle \alpha_i, \alpha_j \rangle &= \frac{1}{2^{2m}} \sum_{k=0}^L a_{i,k} a_{j,k} |v_k|^2 \\
    &= \frac{1}{2^{2m}} \sum_{k=0}^L a_{i,k} a_{j,k} (2^n D_k).
\end{align*}
Since $m = n - 2$ in this case, the denominator is $2^{2m} = 2^{2n - 4}$. Therefore, we have $\frac{2^n}{2^{2n-4}} = \frac{1}{2^{n-4}}$. 

Using this scaling factor ($\frac{1}{2^{n-4}}$), and letting $V_1, V_2, V_3, V_4$ denote the $(L+1) \times (L+1)$ diagonal matrices containing the subfield generators $D_k$ for the first, second, third, and fourth blocks respectively, the isolated Gram matrices are:
\begin{align*}
    G_\alpha &= \frac{1}{2^{n-4}} A_m V_1 A_m^T, \\
    G_\gamma &= \frac{1}{2^{n-4}} A_m V_3 A_m^T.
\end{align*}

For the coupled block $G_{\beta, \delta}$, we compute the cross-terms. Since $\beta_j$ strictly belongs to the second block and $\eta_j$ strictly belongs to the fourth block, $\langle \beta_i, \eta_j \rangle = 0$. Thus, we see that:
\begin{align*}
    \langle \beta_i, \delta_j \rangle &= \frac{1}{2} \langle \beta_i, \beta_j + \eta_j \rangle = \frac{1}{2} \langle \beta_i, \beta_j \rangle, \\
    \langle \delta_i, \delta_j \rangle &= \frac{1}{4} \langle \beta_i + \eta_i, \beta_j + \eta_j \rangle = \frac{1}{4} \bigl( \langle \beta_i, \beta_j \rangle + \langle \eta_i, \eta_j \rangle \bigr).
\end{align*}
Factoring out the shared transition matrix $A_m$, we can express the $2(L+1) \times 2(L+1)$ coupled Gram block as:
\[
    G_{\beta, \delta} = \frac{1}{2^{n-4}} 
    \begin{bmatrix} A_m & 0 \\ 0 & A_m \end{bmatrix} 
    \begin{bmatrix} V_2 & \frac{1}{2} V_2 \\ \frac{1}{2} V_2 & \frac{1}{4}(V_2 + V_4) \end{bmatrix} 
    \begin{bmatrix} A_m^T & 0 \\ 0 & A_m^T \end{bmatrix}.
\]

\subsubsection*{Shape of \texorpdfstring{$K_n$}{Kn}}

To find the shape, we project the Minkowski lattice orthogonally onto $v_0^\perp$. Using our refined basis $\mathcal{B}_3'$, the projected lattice $\Lambda^\perp$ is generated by:
\[
    \mathcal{B}^\perp = \{\alpha_1^\perp, \ldots, \alpha_L^\perp\} \cup \{\beta_0^\perp, \ldots, \beta_L^\perp\} \cup \{\gamma_0^\perp, \ldots, \gamma_L^\perp\} \cup \{\delta_0^\perp, \ldots, \delta_L^\perp\}.
\]

Since $v_0$ lies strictly within the first block $v^{(1)}$, the orthogonal projection completely ignores the $\beta, \gamma,$ and $\delta$ elements. Just as in the previous cases:
\begin{align*}
    \alpha_j^\perp &= \frac{1}{2^m} \sum_{k=1}^L a_{j,k} v_k \quad \text{for } 1 \leq j \leq L, \\
    \beta_j^\perp &= \beta_j, \quad \gamma_j^\perp = \gamma_j, \quad \delta_j^\perp = \delta_j.
\end{align*}

The total dimension of this projected lattice is $L + 3(L+1) = 4L + 3 = 4(2^m - 1) + 3 = 2^{m+2} - 1 = 2^n - 1 = \ell$.

The Gram matrix $G_{\Lambda^\perp}$ of the projected lattice keeps the identical block structure of $G_{\mathcal{B}_3}$, with the one exception that the unramified block $G_\alpha$ is reduced to the $L \times L$ submatrix $\tilde{A}_m \tilde{V}_1 \tilde{A}_m^T$ (where the $v_0$ component is removed).

Geometrically, this block-diagonal structure proves that $\Lambda^\perp$ is the orthogonal direct sum of several distinct sublattices. The $\alpha^\perp$ and $\gamma^\perp$ blocks correspond directly to primitive orthorhombic lattices. However, the inner matrix of the coupled block $G_{\beta, \delta}$ shows a structural shift. For each internal index $k$, it contains a $2 \times 2$ submatrix of the form:
\[
    \begin{bmatrix} D_{2^m+k} & \frac{1}{2} D_{2^m+k} \\ \frac{1}{2} D_{2^m+k} & \frac{1}{4}(D_{2^m+k} + D_{3\cdot 2^m+k}) \end{bmatrix}.
\]
This strictly corresponds to a lattice generated by the vectors $x = e_1$ and $y = \frac{1}{2}(e_1 + e_2)$ with $e_1, e_2$ orthogonal, which is classically recognized as a base-centered orthorhombic lattice. 
Normalizing the lattice by $1/|v_1|^2$ yields the exact same continuous parameters:   $ \lambda_j := \frac{D_j}{D_1} \quad \text{for } 2 \leq j \leq \ell.$
Thus, regardless of the ramification profile, the final shape of the field $K_n$ is uniquely determined by $\lambda_2, \ldots, \lambda_\ell$.

\section{Density results}
\par In this section, we prove our main density results for totally real multiquadratic number fields $K_n$ of degree $2^n$ in which $2$ is unramified (i.e. in Case 1 as described in the previous section).
\subsection{Parameterization of totally real multiquadratic extensions}
\par Let $n\ge 3$ and put $\ell:=2^n-1$. In this section we prove our density results for the variation of shape parameters, and we begin by parameterizing totally real multiquadratic fields $K_n$ with $\op{Gal}(K_n/\Q)\simeq \F_2^n$ by certain $\ell$-tuples of positive integers $(g_1, \dots, g_\ell)$. Set $a_i:=D_{2^{i-1}}$ for
$1\le i\le n$. Let $G:=\F_2^n$ and $\Phi_n:=G\setminus\{0\}$. We define an
ordering $G=\{\mathbf v_0,\dots,\mathbf v_\ell\}$ as follows. For $j=0, \dots, \ell$ if $j=\sum_{t=1}^n b_t 2^{t-1}$ is the binary expansion of the integer $j$,
we have $\mathbf v_j=(b_1,\dots,b_n)\in\F_2^n$.
In particular, if $\mathbf e_1,\dots,\mathbf e_n$ denotes the standard basis of
$\F_2^n$, then $\mathbf v_{2^{i-1}}=\mathbf e_i$. Note that $\mathbf v_0$ is the $0$ vector. Given $a\in \F_2$, let $\widetilde{a}\in \Z$ be $0$ if $a=0\in \F_2$ and $1$ if $a=1\in \F_2$. 

\begin{definition}
An $\ell$–tuple $(g_1,\dots,g_\ell)$ of positive integers is
\emph{strongly carefree} if the $g_i$ are squarefree and pairwise
coprime and is said to be \emph{nondegenerate} if each coordinate $g_i>1$.
\end{definition}
\noindent We write $\mathbf v_i\cdot \mathbf v_j$ for the standard dot product over $\F_2$.
Given a strongly carefree tuple $(g_1,\dots,g_\ell)$, define for $j=1, \dots, \ell$, 
\begin{equation}\label{eq:D-param-general}
D_j :=
\prod_{\substack{1\le i\le \ell\\ \mathbf v_i\cdot \mathbf v_j=1}}
g_i=\prod_{i=1}^\ell g_i^{\widetilde{\mathbf v_i\cdot \mathbf v_j}}.
\end{equation}
Here, the understanding is that $g_i^0=1$ and $g_i^1=g_i$ where $0$ and $1$ are the trivial and nontrivial elements in $\F_2$ respectively. In particular,
\begin{equation}\label{eq2:D-param-general}
a_j=D_{2^{j-1}}
=\prod_{\substack{1\le i\le \ell\\ \mathbf v_i\cdot \mathbf e_j=1}}
g_i,
\end{equation}
and, by pairwise coprimality,
\[
g_i=\gcd\bigl\{\,D_j:\ \mathbf v_i\cdot \mathbf v_j=1\,\bigr\}.
\]

\begin{proposition}
Let $(g_1,\dots,g_\ell)$ be a nondegenerate strongly carefree tuple.
Then the classes of $a_1,\dots,a_n$ are linearly independent
in $\Q^\times/\Q^{\times 2}$. Thus the number field 
\[K_n=\Q(\sqrt{a_1}, \dots, \sqrt{a_n})\]
is an extension of degree $2^n$ with Galois group $\op{Gal}(K_n/\Q)$ isomorphic to $\F_2^n$. 
\end{proposition}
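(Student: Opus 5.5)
To prove the statement, I will show that every nontrivial product $\prod_{j\in S} a_j$ with $\emptyset\neq S\subseteq\{1,\dots,n\}$ is not a square in $\Q^\times$. Linear independence of the classes of $a_1,\dots,a_n$ in $\Q^\times/\Q^{\times 2}$ is exactly this statement. The Galois-theoretic conclusion then follows from Kummer theory over $\Q$: the subgroup of $\Q^\times/\Q^{\times2}$ generated by the $a_j$ has order $2^n$, so $[K_n:\Q]=2^n$ and $\op{Gal}(K_n/\Q)\simeq\F_2^n$.

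\textbf{Key computation.} Fix a nonempty $S$ and let $\mathbf w=\sum_{j\in S}\mathbf e_j\in\F_2^n$, which is nonzero. Each $g_i$ divides $a_j$ exactly when $\mathbf v_i\cdot\mathbf e_j=1$. Hence, in the product $\prod_{j\in S}a_j$, the factor $g_i$ appears with exponent
\[
\#\{j\in S:\ \mathbf v_i\cdot\mathbf e_j=1\}\equiv \mathbf v_i\cdot\mathbf w \pmod 2 .
\]
Therefore $\prod_{j\in S}a_j$ equals $\prod_i g_i^{\widetilde{\mathbf v_i\cdot\mathbf w}}$ times a perfect square. This is precisely the squarefree quantity $D_k$ of \eqref{eq:D-param-general}, where $\mathbf v_k=\mathbf w$. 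Here I use that the $g_i$ are squarefree and pairwise coprime, so a product of distinct $g_i$'s is squarefree.

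\textbf{Nontriviality.} It remains to show $D_k\neq 1$. Since $\mathbf w\neq 0$, there exists $\mathbf v_i\in\Phi_n$ with $\mathbf v_i\cdot\mathbf w=1$; for instance, take $\mathbf v_i=\mathbf e_t$ for any coordinate $t$ with $w_t=1$. Nondegeneracy gives $g_i>1$, so $D_k>1$ is a squarefree integer exceeding $1$ and hence not a square. This completes the independence argument.

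\textbf{Main obstacle.} The only delicate point is the bookkeeping of exponents modulo $2$, namely identifying the parity with the dot product $\mathbf v_i\cdot\mathbf w$, which is simply linearity of the dot product over $\F_2$. A secondary point is verifying that the resulting squarefree part is genuinely squarefree, which is where pairwise coprimality is essential.
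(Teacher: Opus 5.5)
Your proof is correct and is essentially the paper's argument: you expand $\prod_{j\in S}a_j$ in the $g_i$, reduce each exponent modulo $2$ to $\mathbf v_i\cdot\mathbf w$, and use squarefreeness, pairwise coprimality and nondegeneracy to see that the product is not a square. The only cosmetic difference is that you exhibit the explicit witness $\mathbf v_i=\mathbf e_t$, so you only need $g_{2^{t-1}}>1$. The paper's ``if and only if'' step instead uses nondegeneracy of all the $g_i$ implicitly, and then invokes the fact that no nonzero vector is orthogonal to all of $\Phi_n$.
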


\begin{proof}
Suppose that
\[
\prod_{j\in J} a_j \in \Q^{\times 2}
\]
for some subset $J\subseteq\{1,\dots,n\}$.
Expanding via \eqref{eq2:D-param-general}, we obtain
\[
\prod_{j\in J} a_j
=
\prod_{i=1}^\ell
g_i^{\sum_{j\in J} \widetilde{\mathbf v_i\cdot \mathbf e_j}}.
\]
Because the $g_i$ are pairwise coprime and squarefree,
this product is a square in $\Q^\times$ if and only if $\sum_{j\in J} \widetilde{\mathbf v_i\cdot \mathbf e_j}\equiv 0 \pmod 2$ for every $1\le i\le\ell$. Equivalently,
\[
\mathbf v_i\cdot\Bigl(\sum_{j\in J} \mathbf e_j\Bigr)=0
\quad\text{for all }i.
\]
Since $\{\mathbf v_1,\dots,\mathbf v_\ell\}$ is the set of all nonzero vectors
in $\F_2^n$, the only vector orthogonal to every $\mathbf v_i$
is the zero vector.  Hence
\[
\sum_{j\in J} \mathbf e_j=0
\quad\text{in }\F_2^n,
\]
so this forces $J=\varnothing$.  Therefore the $a_j$ are linearly independent in
$\Q^\times/\Q^{\times 2}$.
\end{proof}
Each $g_i$ occurs in exactly $2^{n-1}$ of the $D_j$. It follows that
\[
\prod_{j=1}^{\ell} D_j
=
\Bigl(\prod_{i=1}^{\ell} g_i\Bigr)^{2^{\,n-1}}.
\]
\noindent Let $p_1,\dots,p_s$ denote the distinct primes dividing
$a_1\cdots a_n$.  Since the $g_i$ are squarefree and pairwise
coprime, we have
\[
\prod_{i=1}^{\ell} g_i
=
p_1\cdots p_s.
\]
Consequently,
\[
\prod_{j=1}^{\ell} D_j
=
(p_1\cdots p_s)^{2^{\,n-1}}.
\]
According to Lemma \ref{lem:integral_basis_case1}, the discriminant of $K_n$ is of the form
\[
\Delta_{K_n}
=
\prod_{j=1}^\ell D_j.
\]
Recall that
\[
\operatorname{sh}(K_n)
=
\operatorname{sh}\!\left(
\frac{D_2}{D_1},\dots,\frac{D_\ell}{D_1}
\right).
\]
From \eqref{eq:D-param-general} we have
\[
\frac{D_j}{D_1}
=
\frac{\prod_{i=1}^{\ell}
g_i^{\,\widetilde{\mathbf v_i\cdot\mathbf v_j}}}{\prod_{i=1}^{\ell}
g_i^{\,\widetilde{\mathbf v_i\cdot\mathbf v_1}}}=
\frac{
\prod_{\substack{i\\
\mathbf v_i\cdot \mathbf v_j=1\\
\mathbf v_i\cdot \mathbf v_1=0}}
g_i}
{
\prod_{\substack{i\\
\mathbf v_i\cdot \mathbf v_j=0\\
\mathbf v_i\cdot \mathbf v_1=1}}
g_i}.
\]
Both numerator and denominator are monomials of degree $2^{n-2}$.
In particular each ratio is homogeneous of total degree $0$. 
\par There exists a unique permutation $\sigma$ of $\{1,\dots,\ell\}$
such that
\[
D_{\sigma(1)}
\leq 
D_{\sigma(2)}
\leq 
\dots
\leq 
D_{\sigma(\ell)}.
\]
Setting $\lambda_j:=\frac{D_{\sigma(j)}}{D_{\sigma(1)}}$, we find that 
\[1\leq \lambda_2\leq \lambda_3\leq \dots \leq \lambda_\ell.\]Fix real parameters
\[
1\le R_2\le \dots\le R_\ell.
\]
For a permutation $\sigma\in S_\ell$,
define $\mathcal G_\sigma(Y;R_2,\dots,R_\ell)$
to be the set of all $(g_1,\dots,g_\ell)\in\mathbb R_{\ge1}^\ell$ such that
\begin{enumerate}
\item $D_{\sigma(1)}
\leq 
D_{\sigma(2)}
\leq 
\dots
\leq 
D_{\sigma(\ell)}$,
\item $\displaystyle \prod_{j=1}^{\ell} g_j < Y$,
\item $\lambda_2\leq \lambda_3\leq \dots \leq \lambda_\ell\le R_\ell$,
\item $R_j\le \lambda_{j}$ for $2\le j\le \ell-1$.
\end{enumerate}

\subsection{Volume computations}
We introduce the following notation:
\begin{equation}\label{notation of G etc}
\begin{aligned}
\mathcal G(Y;R_2,\dots,R_\ell)
&:=
\bigcup_{\sigma\in S_{\ell}}
\mathcal G_\sigma(Y;R_2,\dots,R_\ell), \\
\mathcal G_{\mathbb Z,\sigma}(Y;R_2,\dots,R_\ell)
&:=
\mathcal G_\sigma(Y;R_2,\dots,R_\ell)\cap \mathbb Z^\ell, \\
\mathcal G_{\mathbb Z}(Y;R_2,\dots,R_\ell)
&:=
\mathcal G(Y;R_2,\dots,R_\ell)\cap \mathbb Z^\ell, \\
N_\sigma(Y;R_2,\dots,R_\ell)
&:=
\#\mathcal G_{\mathbb Z,\sigma}(Y;R_2,\dots,R_\ell), \\
N(Y;R_2,\dots,R_\ell)
&:=
\#\mathcal G_{\mathbb Z}(Y;R_2,\dots,R_\ell).
\end{aligned}
\end{equation}

\medskip

In this section we estimate the quantities
\[
N_\sigma(Y;R_2,\dots,R_\ell)
\quad\text{and}\quad
N(Y;R_2,\dots,R_\ell)
\]
as $Y\to\infty$. First, for each \[\sigma\in S_{\ell}=\op{Aut}\left(\{1, 2,\dots, \ell\}\right)\]we compute the Euclidean volume of the region 
\(
\mathcal G_\sigma(Y;R_2,\dots,R_\ell).
\)
Second, we apply Davenport's lemma to deduce an asymptotic formula for
\(
N_\sigma(Y;R_2,\dots,R_\ell)
\)
from the corresponding volume estimate. For this purpose it is necessary to obtain uniform bounds for the volumes of all coordinate projections of 
\(
\mathcal G_\sigma(Y;R_2,\dots,R_\ell)
\)
onto lower-dimensional coordinate subspaces. 
\par First we prove a number of preliminary results leading up to our volume computations. For $i=0,\dots,\ell$, define the quadratic character $\chi_i:G\rightarrow \{\pm 1\}$ by:
\[
\chi_i(\mathbf v_j)
=
(-1)^{\widetilde{\mathbf v_j\cdot \mathbf v_i}},
\]
and for $i=0,\dots,\ell$ set
\[
\mathbf w_i
:=
\bigl(
\chi_i(\mathbf v_0),
\chi_i(\mathbf v_1),
\dots,
\chi_i(\mathbf v_\ell)
\bigr)
\in \R^{\ell+1}.
\] Note that $\mathbf w_0=(1,1,\dots, 1)$ since $\mathbf v_0=0$. The Euclidean space $\mathbb{R}^{\ell+1}$ is equipped with the standard inner product $\langle\cdot, \cdot\rangle$. 

\begin{lemma}\label{lemma:character-basis}
With the notation above:

\begin{enumerate}
\item The vectors $\mathbf w_0,\mathbf w_1,\dots,\mathbf w_\ell$
form an orthogonal basis of $\R^{\ell+1}$.

\item For $i\ge 1$, the vectors $\mathbf w_i$ lie in the hyperplane
\[
H
:=
\Bigl\{
x\in\R^{\ell+1} :
\sum_{j=0}^{\ell} x_j = 0
\Bigr\},
\]

\item $\{\mathbf w_1,\dots,\mathbf w_\ell\}$
is a basis of $H$.
\end{enumerate}
\end{lemma}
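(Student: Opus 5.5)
The plan is to prove everything from the basic character orthogonality relation on $G=\F_2^n$: for every $\mathbf u\in G$,
\[
\sum_{j=0}^{\ell}(-1)^{\widetilde{\mathbf v_j\cdot \mathbf u}}=\begin{cases}2^n & \text{if } \mathbf u=0,\\ 0 & \text{if } \mathbf u\neq 0.\end{cases}
\]
To see this, note that if $\mathbf u\neq 0$ the linear functional $\mathbf x\mapsto \mathbf x\cdot\mathbf u$ on $\F_2^n$ is surjective. Its kernel therefore has index $2$, so exactly $2^{n-1}$ of the vectors $\mathbf v_j$ give $+1$ and $2^{n-1}$ give $-1$. When $\mathbf u=0$, every term equals $1$. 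This is the same fact already used in the text, that every column of $A_n$ other than the first sums to zero. In fact $\mathbf w_i$ is exactly the $i$-th row of $A_n$ by Lemma \ref{lem:galois_matrix}, but I would argue directly from the dot product.

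For part (1), I will compute
\[
\langle \mathbf w_i,\mathbf w_k\rangle=\sum_{j=0}^{\ell}(-1)^{\widetilde{\mathbf v_j\cdot\mathbf v_i}+\widetilde{\mathbf v_j\cdot \mathbf v_k}}.
\]
The exponent only matters mod $2$, and mod $2$ it equals $\mathbf v_j\cdot(\mathbf v_i+\mathbf v_k)$ by bilinearity. Since $i\neq k$ gives $\mathbf v_i\neq\mathbf v_k$, we have $\mathbf v_i+\mathbf v_k\neq 0$, and the orthogonality relation gives $\langle \mathbf w_i,\mathbf w_k\rangle=0$. The same computation with $i=k$ gives $|\mathbf w_i|^2=2^n\neq 0$, consistent with $A_n^2=2^nI$. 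So we have $\ell+1=2^n$ nonzero pairwise orthogonal vectors in $\R^{\ell+1}$. They are linearly independent, hence an orthogonal basis.

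For part (2), I will note that the coordinate sum of $\mathbf w_i$ equals $\langle \mathbf w_i,\mathbf w_0\rangle$, because $\mathbf w_0=(1,\dots,1)$. For $i\ge1$ this vanishes by part (1), equivalently by the orthogonality relation with $\mathbf u=\mathbf v_i\neq0$. So $\mathbf w_i\in H$.

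For part (3), $H=\mathbf w_0^\perp$ has dimension $\ell$. The vectors $\mathbf w_1,\dots,\mathbf w_\ell$ lie in $H$ by part (2) and are linearly independent by part (1), so they form a basis of $H$. There is no real obstacle here. The only point needing care is the identification of the exponent parity with $\mathbf v_j\cdot(\mathbf v_i+\mathbf v_k)$, since $\widetilde{\,\cdot\,}$ takes values in $\Z$ and only its residue mod $2$ affects the sign.
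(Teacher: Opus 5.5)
Your proof is correct and follows the paper's argument: character orthogonality on $G=\F_2^n$ gives (1), vanishing of the coordinate sum of a nontrivial character gives (2), and a dimension count gives (3). The only difference is that you verify the orthogonality relation directly, using $\mathbf v_i+\mathbf v_k\neq 0$ and the surjectivity of $\mathbf x\mapsto \mathbf x\cdot\mathbf u$ for $\mathbf u\neq 0$, whereas the paper simply quotes the standard orthogonality relations.
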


\begin{proof}
The orthogonality relations for characters of the finite abelian group
$G=\F_2^n$ state that for $i,k\in\{0,\dots,\ell\}$,
\[
\sum_{j=0}^{\ell}
\chi_i(\mathbf v_j)\chi_k(\mathbf v_j)
=
\begin{cases}
2^n & \text{if } i=k, \\
0 & \text{if } i\ne k.
\end{cases}
\]
Thus the vectors $\mathbf w_0,\dots,\mathbf w_\ell$
are mutually orthogonal and nonzero, hence form a basis of $\R^{\ell+1}$.
This proves (1).

For $i\ge1$, the character $\chi_i$ is nontrivial, and hence
\[
\sum_{j=0}^{\ell} \chi_i(\mathbf v_j)=0.
\]
Thus $\mathbf w_i\in H$, proving (2).

Since $\dim H = \ell$ and there are exactly $\ell$
vectors $\mathbf w_1,\dots,\mathbf w_\ell$,
which are linearly independent by (1),
they form a basis of $H$.
This proves (3).
\end{proof}

\begin{lemma}\label{lemma:difference-basis}
Let $\sigma\in S_\ell$. For $i=2, \dots, \ell$ let $\varepsilon^i\in \R^\ell$ be the vector obtained by
deleting the $0$-th coordinate from $\mathbf w_{\sigma(i)}-\mathbf w_{\sigma(1)}$ and scaling by $1/2$, i.e.
\[
\varepsilon^i
:=
\frac{1}{2}\cdot\bigl(
\chi_{\sigma(i)}(\mathbf v_j)-\chi_{\sigma(1)}(\mathbf v_j)
\bigr)_{1\le j\le \ell}.
\]
Let
\[
H'
:=
\Bigl\{
(y_1,\dots,y_\ell)\in\R^\ell
:\ \sum_{j=1}^\ell y_j=0
\Bigr\}.
\]
Then:
\begin{enumerate}
\item Each $\varepsilon^i$ lies in $H'$.

\item The family $\{\varepsilon^i : i=2,\dots,\ell\}$
is linearly independent.

\item Consequently,
$\{\varepsilon^i : i=2,\dots,\ell\}$
forms a basis of $H'$.
\end{enumerate}
\end{lemma}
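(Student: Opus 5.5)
The plan is to reduce everything to Lemma~\ref{lemma:character-basis}, using one observation: the $0$-th coordinate of each difference $\mathbf w_{\sigma(i)}-\mathbf w_{\sigma(1)}$ is zero. Indeed $\mathbf v_0=0$, so $\chi_k(\mathbf v_0)=1$ for every $k$. Hence the map that deletes the $0$-th coordinate and scales by $1/2$ loses no information on the span of these differences. More precisely, it restricts to a linear isomorphism from $\{x\in\R^{\ell+1}: x_0=0\}$ onto $\R^\ell$.

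For (1), note that $\sigma(i),\sigma(1)\ge 1$, so $\mathbf w_{\sigma(i)}$ and $\mathbf w_{\sigma(1)}$ both lie in $H$ by part (2) of Lemma~\ref{lemma:character-basis}. Their difference therefore has coordinate sum $0$ over $j=0,\dots,\ell$. Its $0$-th coordinate is $1-1=0$, so the sum over $j=1,\dots,\ell$ also vanishes. Scaling by $1/2$ preserves this, giving $\varepsilon^i\in H'$.

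For (2), suppose $\sum_{i=2}^{\ell} c_i\varepsilon^i=0$ with real $c_i$. Multiply by $2$ and reinstate the $0$-th coordinate, which is zero for every difference. This gives the identity
\[
\sum_{i=2}^{\ell} c_i\,\mathbf w_{\sigma(i)} \;-\;\Bigl(\sum_{i=2}^{\ell} c_i\Bigr)\mathbf w_{\sigma(1)} \;=\;0
\]
in $\R^{\ell+1}$. Since $\sigma$ is a permutation, the indices $\sigma(1),\dots,\sigma(\ell)$ are distinct. By part (1) of Lemma~\ref{lemma:character-basis}, the vectors $\mathbf w_{\sigma(1)},\dots,\mathbf w_{\sigma(\ell)}$ are mutually orthogonal and nonzero, hence linearly independent. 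So every coefficient vanishes; in particular $c_i=0$ for $2\le i\le\ell$.

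Part (3) then follows by counting dimensions. $H'$ is the kernel of a nonzero linear functional on $\R^\ell$, so $\dim H'=\ell-1$. By (1) and (2) we have $\ell-1$ linearly independent vectors in $H'$, so they form a basis.

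There is no serious obstacle. The only point needing care is the bookkeeping in (2): one must reinstate the deleted coordinate and check that it is zero, so that a linear relation among the $\varepsilon^i$ genuinely lifts to a relation among the $\mathbf w_k$ in $\R^{\ell+1}$.
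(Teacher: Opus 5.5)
Your proposal is correct and matches the paper's proof step for step. Both use the vanishing $0$-th coordinate and the coordinate-sum identity for (1), lift a relation among the $\varepsilon^i$ to one among the $\mathbf w_k$ and apply Lemma~\ref{lemma:character-basis} for (2), and count dimensions for (3). The only cosmetic difference is that you treat an arbitrary $\sigma$ directly, whereas the paper writes out $\sigma=\mathrm{id}$ and says the general case is identical.
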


\begin{proof}
First assume that $\sigma$ is the identity permutation. Recall that $\mathbf v_0=0$, hence
\[
\chi_i(\mathbf v_0)=1
\quad\text{for all } i.
\]
Therefore the $0$-th coordinate of $\mathbf w_i-\mathbf w_1$ equals
\[
\chi_i(\mathbf v_0)-\chi_1(\mathbf v_0)=1-1=0.
\]
Thus deleting the $0$-th coordinate simply removes a common zero
coordinate from each difference vector. To prove (1), note that for $i\geq 1$,
\[
\sum_{j=0}^\ell \chi_i(\mathbf v_j)=0,
\]
and therefore, for $i\geq 2$,
\[
\sum_{j=0}^\ell
\bigl(
\chi_i(\mathbf v_j)-\chi_1(\mathbf v_j)
\bigr)=0.
\]
Because the $0$-th coordinate of the difference is zero,
this reduces to
\[
\sum_{j=1}^\ell \varepsilon^i_j=0.
\]
Hence for $i\geq 2$, we have that $\varepsilon^i\in H'$.
\par Next we prove part (2). Suppose that $\sum_{i=2}^\ell c_i \varepsilon^i=0$. Reinstating the deleted $0$-th coordinate (which is zero for every
$\varepsilon^i$), this yields
\[
\sum_{i=2}^\ell c_i(\mathbf w_i-\mathbf w_1)=0
\quad
\text{in }\R^{\ell+1}.
\]
Hence
\[
\sum_{i=2}^\ell c_i \mathbf w_i
-
\Bigl(\sum_{i=2}^\ell c_i\Bigr)\mathbf w_1
=
0.
\]
By Lemma~\ref{lemma:character-basis}, the vectors
$\mathbf w_0,\mathbf w_1,\dots,\mathbf w_\ell$
form a basis of $\R^{\ell+1}$.
In particular, $\mathbf w_1,\dots,\mathbf w_\ell$
are linearly independent.
Therefore all coefficients $c_i=0$ and the vectors $\varepsilon^i$ are linearly independent.
\par Part (3) follows from (2). In greater detail, the hyperplane $H'$ has dimension $\ell-1$ and there are exactly $\ell-1$ vectors
$\varepsilon^2,\dots,\varepsilon^\ell$,
which are linearly independent by (2).
Hence they form a basis of $H'$.
\par For a nontrivial $\sigma\in S_\ell$ an identical argument gives the result.
\end{proof}

\begin{proposition}\label{prop:volume-G-sigma}
Let $\ell \ge 2$, let $\sigma\in S_\ell$
and let $1\leq R_2 \le \cdots \le R_\ell$ be real numbers.
Then there exists an explicit constant $c_\ell>0$, depending only on $\ell$,
such that
\[
\op{Vol}\!\left(\mathcal G_\sigma(Y;R_2,\dots,R_\ell)\right)
=
c_\ell\, Y\, F(R_2,\dots,R_\ell),
\]
where
\[
F(R_2,\dots,R_\ell)
:=
\int_{R_{\ell-1}}^{R_\ell}
\int_{R_{\ell-2}}^{a_\ell}
\cdots
\int_{R_2}^{a_3}
\frac{da_2}{a_2}
\cdots
\frac{da_\ell}{a_\ell}.
\]
Moreover, the volume is independent of $\sigma$.
\end{proposition}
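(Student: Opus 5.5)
The plan is to pass to logarithmic coordinates, where every condition defining $\mathcal G_\sigma(Y;R_2,\dots,R_\ell)$ becomes linear. Write $g_i=e^{x_i}$ with $x_i\ge 0$, so that $dg_1\cdots dg_\ell=e^{x_1+\cdots+x_\ell}\,dx$. By \eqref{eq:D-param-general}, $\log D_j=\sum_i \widetilde{\mathbf v_i\cdot\mathbf v_j}\,x_i=\tfrac12\sum_i\bigl(1-\chi_j(\mathbf v_i)\bigr)x_i$. Hence
\[
\log\lambda_k=\log D_{\sigma(k)}-\log D_{\sigma(1)}=-\langle \varepsilon^k,x\rangle ,
\]
with $\varepsilon^k$ as in Lemma~\ref{lemma:difference-basis}.

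Next introduce the coordinates $s=x_1+\cdots+x_\ell$ and $t_k=\log\lambda_k$ for $2\le k\le\ell$. By Lemma~\ref{lemma:difference-basis}, the functionals $\langle\varepsilon^k,\cdot\rangle$ restrict to a basis of the dual of $H'$. The functional $s$ vanishes on $H'$ and is nonzero on $(1,\dots,1)$, which is orthogonal to $H'$. Therefore $x\mapsto(s,t_2,\dots,t_\ell)$ is a linear isomorphism $\R^\ell\to\R^\ell$. Its matrix $C$ has rows $(1,\dots,1)$ and $-\varepsilon^k$. Set $c_\ell:=|\det C|^{-1}$, in agreement with \eqref{defn of C}.

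To see that $c_\ell$ does not depend on $\sigma$, I would use the following argument. Replacing $\sigma$ changes the rows $\varepsilon^k$ by a reindexing of characters and subtracts a different base row. Changing the base row from $\chi_{\sigma(1)}$ is a unimodular row operation on the lattice generated by the differences, so it preserves $|\det|$. Alternatively, $|\det C|$ equals $\ell$ times the covolume of the span of the vectors $\tfrac12(\mathbf w_{i}-\mathbf w_{1})$ restricted to $H'$, and this is symmetric in the characters. I would verify this carefully, since the independence in $\sigma$ claimed in the statement hinges on it.

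In the new coordinates the conditions read as follows.
\begin{itemize}
\item Condition (1) becomes $0\le t_2\le\cdots\le t_\ell$.
\item Conditions (3) and (4) become $\log R_k\le t_k$ for $k\le\ell-1$, together with $t_\ell\le\log R_\ell$.
\item Condition (2) becomes $s<\log Y$.
\item The constraint $x_i\ge0$ remains.
\end{itemize}
The Jacobian gives
\[
\op{Vol}=c_\ell\int e^{s}\,ds\,dt .
\]
For fixed $t$ in the compact box determined by the $R_j$, the fibre of allowed $s$ is a half-line $\{s\ge s_0(t)\}$ cut at $\log Y$. Here $s_0(t)$ is bounded in terms of the $R_j$, coming from positivity of the $x_i$. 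So the $s$-integral is $Y-e^{s_0(t)}=Y+O_R(1)$. The $t$-integral over the simplex-like region is
\[
\int dt_2\cdots dt_\ell ,
\]
which after the substitution $a_k=e^{t_k}$ is exactly $F(R_2,\dots,R_\ell)$.

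The main obstacle is the lower boundary coming from $x_i\ge0$ (equivalently $g_i\ge1$). It contributes only $O(1)$, so the identity as stated is exact only after accounting for this term. Either the statement is meant asymptotically, or the region should be taken with $g_i>0$. In that case the $s$-fibre is all of $(-\infty,\log Y)$ and we get exactly $\int_{-\infty}^{\log Y}e^s\,ds=Y$. I would first work with $g_i>0$, which yields the exact formula $c_\ell YF$. I would then note that imposing $g_i\ge1$ changes the volume by $O_{R}(1)$, which is what Davenport's lemma needs later.
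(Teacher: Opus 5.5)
Your proof is correct and follows the paper's own route. Like the paper, you pass to $x_i=\log g_i$, change variables by the matrix $C$ (rows $(1,\dots,1)$ and the halved character differences), and integrate out $e^{s}=a_1=\prod g_i$, with $c_\ell=|\det C|^{-1}$ and the remaining integral equal to $F$.

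Your extra remarks are also right and go beyond the paper. The paper integrates $a_1$ over $(0,Y)$, i.e.\ tacitly over $g_i>0$, so with $g_i\ge 1$ the identity holds only up to the $O_R(1)$ term you identify (harmless for Davenport's lemma). Your change-of-base-row argument, a unimodular transformation of the difference rows, also supplies the $\sigma$-independence of $|\det C|$, which the paper asserts without proof.
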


\begin{proof}
\par By definition, the region $\mathcal G_{\sigma}(Y;R_2,\dots,R_\ell)$ is described by positive
variables $g_1,\dots,g_\ell$ subject to:
\begin{enumerate}
\item[(i)] a discriminant constraint
\[
0<\prod_{j=1}^{\ell} g_j < Y,
\]
\item[(ii)] ordering constraints:
\[
1\leq \frac{D_{\sigma(2)}}{D_{\sigma(1)}}\le \frac{D_{\sigma(3)}}{D_{\sigma(1)}} \le \cdots \le \frac{D_{\sigma(\ell)}}{D_{\sigma(1)}} \le R_\ell\quad \text{and}\quad \frac{D_{\sigma(j)}}{D_{\sigma(1)}}\geq R_j\quad \text{for}\quad 2\leq j\leq \ell-1.
\]
\end{enumerate}
\noindent We introduce new variables
\[
a_1 := \prod_{j=1}^{\ell} g_j,
\qquad
a_j := \lambda_{j}=\frac{D_{\sigma(j)}}{D_{\sigma(1)}} \quad \text{for}\quad 2\le j\le \ell.
\]
Then condition (i) becomes
\[
0<a_1<Y,
\]
and condition (ii) becomes
\[
1\leq a_2 \le a_3 \le \cdots \le a_\ell \le R_\ell\quad \text{and}\quad a_j\geq R_j\quad \text{for}\quad 2\leq j\leq \ell-1.
\]
Introduce logarithmic variables
\[
x_i=\log g_i\quad \text{and}\quad
y_j=\log a_j \quad \text{for} \quad 1\le j\le \ell.
\]
Then
\[
y_1=x_1+\cdots+x_\ell,
\]
and for $j\ge2$,
\[
y_j=\sum_{i=1}^{\ell} c_{ji}x_i
\]
with
\[
\sum_{i=1}^{\ell} c_{ji}=0.
\]
Thus the map $(x_1, \dots, x_\ell)\mapsto (y_1, \dots, y_\ell)$ is linear, with matrix
\begin{equation}\label{defn of C}C= \begin{pmatrix} 1 & \cdots & 1 \\ c_{2,1} & \cdots & c_{2,\ell} \\ \vdots & & \vdots \\ c_{\ell,1} & \cdots & c_{\ell,\ell} \end{pmatrix}.
\end{equation}
Now recall that
\[\lambda_{j}=\frac{D_{\sigma(j)}}{D_{\sigma(1)}}
=
\frac{
\prod_{\substack{i\\
\mathbf v_i\cdot \mathbf v_{\sigma(j)}=1\\
\mathbf v_i\cdot \mathbf v_{\sigma(1)}=0}}
g_i}
{
\prod_{\substack{i\\
\mathbf v_i\cdot \mathbf v_{\sigma(j)}=0\\
\mathbf v_i\cdot \mathbf v_{\sigma(1)}=1}}
g_i}=\prod_{i=1}^\ell g_i^{\frac{\chi_{\sigma(j)}(\mathbf v_i)-\chi_{\sigma(1)}(\mathbf v_i)}{2}},\]
from which it follows that 
\[c_{j,i}=\frac{\chi_{\sigma(j)}(\mathbf v_i)-\chi_{\sigma(1)}(\mathbf v_i)}{2}.\]
\noindent It then follows from Lemma \ref{lemma:difference-basis} that the rows of the matrix $C$ above are linearly independent and that $\det C\neq0$. 
\par Since
\[
dg_i=g_i\,dx_i,
\qquad
da_j=a_j\,dy_j,
\]
we obtain
\[
dg_1\cdots dg_\ell
=
(g_1\cdots g_\ell)\,dx_1\cdots dx_\ell,
\]
and
\[
da_1\cdots da_\ell
=
(a_1\cdots a_\ell)\,dy_1\cdots dy_\ell.
\]
Using $dy=(\det C)\,dx$, we deduce
\[
dg_1\cdots dg_\ell
=
\frac{g_1\cdots g_\ell}{a_1\cdots a_\ell}
\,|\det C|^{-1}
\, da_1\cdots da_\ell.
\]
Since $a_1=g_1\cdots g_\ell$, this simplifies to
\[
dg_1\cdots dg_\ell
=
c_\ell\,
\frac{1}{a_2\cdots a_\ell}
\, da_1\cdots da_\ell,
\]
for a constant $c_\ell>0$ depending only on $\ell$.
Thus
\[
\left|
J\!\left(\frac{\partial g_i}{\partial a_j}\right)
\right|
=
c_\ell\, (a_2\cdots a_\ell)^{-1}.
\]
\noindent Hence
\[\begin{split}
\op{Vol}(\mathcal G_\sigma(Y;R_2, \dots, R_\ell))
&=
c_\ell
\int_{R_{\ell-1}}^{R_\ell}
\!\!\cdots
\int_{R_2}^{a_3}
\int_{0}^{Y}
\frac{1}{a_2\cdots a_\ell}
\, da_1\, da_2\cdots da_\ell\\
&=
c_\ell Y
\int_{R_{\ell-1}}^{R_\ell}
\!\!\cdots
\int_{R_2}^{a_3}
\frac{da_2}{a_2}\cdots\frac{da_\ell}{a_\ell}=c_\ell Y F(R_2,\dots,R_\ell).
\end{split}\]
\end{proof}

\begin{theorem}\label{thm:lattice-count}
With respect to notation from \eqref{notation of G etc}, we have that:
\[
N(Y;R_2,\dots,R_\ell)
=
\operatorname{Vol}\!\left(
\mathcal G(Y;R_2,\dots,R_\ell)
\right)
+
O\!\left(
Y^{\frac{\ell-1}{\ell}}
\right),
\]
where the implied constant depends only on $n$ and $R_\ell$.
\end{theorem}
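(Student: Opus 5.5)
The plan is to apply Davenport's lemma to each region $\mathcal G_\sigma(Y;R_2,\dots,R_\ell)$ and then sum over $\sigma\in S_\ell$. The sets $\mathcal G_\sigma$ cover $\mathcal G$ and overlap only where two of the $D_j$ coincide; that locus lies in a finite union of hypersurfaces. Davenport's lemma requires a bounded semialgebraic region. Each $\mathcal G_\sigma$ is defined by boundedly many polynomial inequalities in $g_1,\dots,g_\ell$: $\prod g_j<Y$, $g_i\ge1$, and monomial inequalities $D_{\sigma(j)}\le D_{\sigma(j+1)}$, $R_jD_{\sigma(1)}\le D_{\sigma(j)}\le R_\ell D_{\sigma(1)}$. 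Their number and degrees depend only on $n$. Davenport's lemma then gives
\[
\#(\mathcal G_\sigma\cap\Z^\ell)=\op{Vol}(\mathcal G_\sigma)+O\bigl(\max_{\pi}\op{Vol}(\pi(\mathcal G_\sigma))\bigr),
\]
where $\pi$ runs over projections onto coordinate subspaces of dimension $1\le k\le \ell-1$ (dimension $0$ contributes $O(1)$). Inclusion–exclusion over the overlaps then yields $N$ from the $N_\sigma$ and $\op{Vol}(\mathcal G)$ from the $\op{Vol}(\mathcal G_\sigma)$. The overlaps have volume zero, and their lattice point counts are bounded by the same projection estimates, since each overlap is again a semialgebraic subset of $\mathcal G_\sigma$.

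The key step, and the main obstacle, is the projection bound: every $k$-dimensional coordinate projection of $\mathcal G_\sigma$ should have volume $O(Y^{k/\ell})$, hence $O(Y^{(\ell-1)/\ell})$ overall. To get this I will first show that on $\mathcal G_\sigma$ each coordinate satisfies $g_i\ll Y^{1/\ell}$, with the constant depending on $n$ and $R_\ell$. The shape constraints give $D_{\sigma(1)}\le D_j\le R_\ell D_{\sigma(1)}$ for all $j$, so all the $D_j$ are comparable. From $g_i=\gcd\{D_j:\mathbf v_i\cdot\mathbf v_j=1\}$ we only get $g_i\le D_j$, which is too weak, so I will instead use the log-linear structure.

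In the log variables $x_i=\log g_i$, write $\log D_j=\sum_i \widetilde{\mathbf v_i\cdot\mathbf v_j}\,x_i$. By Lemma~\ref{lemma:character-basis} the matrix $(\widetilde{\mathbf v_i\cdot\mathbf v_j})_{i,j\ge1}=\tfrac12(J-\text{character matrix})$ is invertible. Its inverse has row sums equal to $2^{1-n}$, because each column of the matrix contains $2^{n-1}$ ones. Hence $x_i=2^{1-n}\log D_{\sigma(1)}+O_{R_\ell}(1)$ for every $i$. Combined with $\sum_i x_i=2^{1-n}\sum_j\log D_j<\log Y$, this gives $x_i\le \tfrac1\ell\log Y+O(1)$. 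Equivalently, in the coordinates $(y_1,\dots,y_\ell)=C(x)$ of Proposition~\ref{prop:volume-G-sigma}, the region is a box of bounded size in $y_2,\dots,y_\ell$ times $y_1<\log Y$, and every $x_i=y_1/\ell+O(1)$.

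Hence $\mathcal G_\sigma\subset[1,cY^{1/\ell}]^\ell$, so any $k$-dimensional coordinate projection has volume at most $c^kY^{k/\ell}\le c^{\ell-1}Y^{(\ell-1)/\ell}$ for $k\le\ell-1$ and $Y\ge1$. Summing over the $\ell!$ permutations and over the boundedly many overlap terms gives
\[
N(Y;R_2,\dots,R_\ell)=\op{Vol}(\mathcal G(Y;R_2,\dots,R_\ell))+O\bigl(Y^{(\ell-1)/\ell}\bigr),
\]
with the implied constant depending only on $n$ and $R_\ell$, as claimed.
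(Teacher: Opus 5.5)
Your proposal is correct and follows the paper's proof. Both arguments apply Davenport's lemma and control every coordinate projection through the bound $g_i\ll Y^{1/\ell}$, extracted from the log-linear relation between the $\log g_i$ and the $\log D_j$; your inversion of $M=(\widetilde{\mathbf v_i\cdot\mathbf v_j})_{i,j\ge1}$ via $M^{-1}\mathbf{1}=2^{1-n}\mathbf{1}$ is a cleaner version of the paper's mean-plus-$H$ decomposition in the basis $\varepsilon^k$, and the only structural difference, applying Davenport to each $\mathcal G_\sigma$ and recombining by inclusion--exclusion, is harmless but unnecessary, since the paper applies the lemma directly to the semialgebraic union $\mathcal G$.
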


\begin{proof}
By Davenport's lemma \cite[Theorem on p.~180]{Davenport},
\[
\bigl|
\#\mathcal G_{\mathbb Z}(Y;R_2,\dots,R_\ell)
-
\operatorname{Vol}\!\left(
\mathcal G(Y;R_2,\dots,R_\ell)
\right)
\bigr|
=
O\!\left(
\max_{0\le m\le \ell-1} V_m
\right),
\]
where $V_m$ denotes the maximal $m$–dimensional volume of a projection of
$\mathcal G(Y;R_2,\dots,R_\ell)$
onto a coordinate hyperplane obtained by setting $\ell-m$ coordinates equal to zero. Thus it suffices to prove that $V_m \ll Y^{m/\ell}$ for $0\le m\le \ell-1$.
\par Let $(g_1,\dots,g_\ell)\in
\mathcal G(Y;R_2,\dots,R_\ell)$ and write
\[
\log \lambda_k
=
\sum_{i=1}^{\ell}
\varepsilon_{ik} \log g_i,
\qquad 2\le k\le \ell,
\]
and define
\[
\varepsilon^{k}
:=
(\varepsilon_{1k},\dots,\varepsilon_{\ell k})
\in \mathbb Z^\ell.
\]
Each $\varepsilon^{k}$ lies in the hyperplane
\[
H'
=
\left\{
x\in\mathbb R^\ell:
\sum_{i=1}^{\ell} x_i=0
\right\}
\]and by Lemma~\ref{lemma:difference-basis},
the family $\{\varepsilon^{k} : 2\le k\le \ell\}$ forms a basis of $H'$.
\par
Let $x := (\log g_1,\dots,\log g_\ell)\in \mathbb R^\ell$ and
write
\[
\bar x
:=
\frac{1}{\ell}
\Bigl(
\sum_{i=1}^{\ell} \log g_i
\Bigr)
(1,\dots,1)
\]
and set $x' := x-\bar x$. Then $x'$ lies in the hyperplane
\[
H=\Bigl\{(u_1,\dots,u_\ell)\in\mathbb R^\ell :
\sum_{i=1}^{\ell} u_i=0\Bigr\}.
\]
By construction, the vectors $\varepsilon^{2},\dots,\varepsilon^{\ell}$ lie in $H$, and by Lemma~\ref{lemma:difference-basis} they form an
$\mathbb R$-basis for $H$. Hence there exist unique real numbers $a_2,\dots,a_\ell$ such that
\[
x'
=
\sum_{k=2}^{\ell} a_k\, \varepsilon^{k}.
\]

Let $\mathcal E$ denote the $(\ell-1)\times(\ell-1)$ matrix whose
$k$th column is the coordinate vector of $\varepsilon^{(k)}$
with respect to a fixed basis of $H$.
Writing $x'$ in that same basis,
the preceding identity becomes
\[
x' = \mathcal E a,
\]
where $a=(a_2,\dots,a_\ell)^{\mathsf T}$.
By Lemma~\ref{lemma:difference-basis}, $\mathcal E$ is invertible over $\mathbb R$. Setting $D:=|\det \mathcal{E}|$, the relation $a
=
\mathcal E^{-1} x'$ implies that each coefficient $a_k$
is a $\frac{1}{\ell D}\mathbb Z$-linear combination of the
$\log \lambda_j$.

Substituting into the decomposition of $x$, we obtain
\[
\log g_j
=
\frac1\ell
\sum_{i=1}^{\ell}\log g_i
+
\sum_{k=2}^{\ell}
c_{jk}\log \lambda_k,
\]
with
$c_{jk}\in \frac{1}{\ell D}\mathbb Z$.

Multiplying by $\ell$ yields
\[
\ell \log g_j
=
\sum_{i=1}^{\ell}\log g_i
+
\sum_{k=2}^{\ell}
d_{jk}\log \lambda_k,
\quad\text{where}\quad
d_{jk}\in\frac{1}{D}\Z.
\]
\noindent Exponentiating gives the identity
\begin{equation}\label{gj-identity-final}
g_j^{\ell}
=
\Bigl(
\prod_{k=2}^{\ell}
\lambda_k^{\,d_{jk}}
\Bigr)
\left(
\prod_{i=1}^{\ell} g_i
\right).
\end{equation}
Inside $\mathcal G_{\op{id}}(Y;R_2,\dots,R_\ell)$ we have
\[
1\le \lambda_k \le R_\ell,
\qquad
\prod_{i=1}^{\ell} g_i \le Y.
\]
Thus \eqref{gj-identity-final} implies
\[
g_j^{\ell}
\le
R_\ell^{C(n)} Y,
\]
where
\[
C(n)
:=
\max_{1\le j\le\ell}
\sum_{k=2}^{\ell}
|d_{jk}|.
\]
The constant $C(n)$ depends only on $n$.

Therefore
\begin{equation}\label{g_i bound}
g_j
\ll
Y^{1/\ell},
\quad \text{for}\quad
1\le j\le\ell,
\end{equation}
with implied constant depending only on $n$ and $R_\ell$.
Fix $0\le m\le \ell-1$ and consider a projection onto $m$ coordinate axes.
Each active coordinate $g_j$ lies in an interval of length
$O(Y^{1/\ell})$.
Therefore the projected region lies inside an $m$–dimensional box of side length $O(Y^{1/\ell})$ in each coordinate, and hence
\[
V_m
\ll
Y^{m/\ell}.
\]
Taking the maximum over $0\le m\le \ell-1$ gives
\[
\max_{0\le m\le \ell-1} V_m
=
O\!\left(
Y^{(\ell-1)/\ell}
\right).
\]
\end{proof}
\begin{corollary}
We have
\[
N(Y;R_2,\dots,R_\ell)
=
c_\ell \ell!
\,Y\,F(R_2,\dots,R_\ell)
+
O\!\left(
Y^{(\ell-1)/\ell}
\right).
\]
\end{corollary}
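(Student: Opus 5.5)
The plan is to combine Theorem~\ref{thm:lattice-count} with Proposition~\ref{prop:volume-G-sigma}. Theorem~\ref{thm:lattice-count} gives
\[
N(Y;R_2,\dots,R_\ell)=\operatorname{Vol}\bigl(\mathcal G(Y;R_2,\dots,R_\ell)\bigr)+O\bigl(Y^{(\ell-1)/\ell}\bigr),
\]
so it suffices to show that
\[
\operatorname{Vol}\bigl(\mathcal G(Y;R_2,\dots,R_\ell)\bigr)=\ell!\,c_\ell\,Y\,F(R_2,\dots,R_\ell).
\]
Since $\mathcal G=\bigcup_{\sigma\in S_\ell}\mathcal G_\sigma$ and Proposition~\ref{prop:volume-G-sigma} gives $\operatorname{Vol}(\mathcal G_\sigma)=c_\ell Y F(R_2,\dots,R_\ell)$ independently of $\sigma$, it remains to check that the $\ell!$ pieces overlap only in a set of Lebesgue measure zero. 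Then inclusion--exclusion (or simply additivity up to null sets) gives the sum $\ell!\,c_\ell Y F$.

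For the overlap, I would argue as follows. If $\sigma\neq\tau$ and $g\in\mathcal G_\sigma\cap\mathcal G_\tau$, then both orderings $D_{\sigma(1)}\le\cdots\le D_{\sigma(\ell)}$ and $D_{\tau(1)}\le\cdots\le D_{\tau(\ell)}$ hold. This forces $D_i=D_j$ for some $i\neq j$, namely a pair whose relative order differs between $\sigma$ and $\tau$. In logarithmic coordinates $x_i=\log g_i$, the equation $D_i=D_j$ becomes the linear equation
\[
\sum_k \tfrac12\bigl(\chi_j(\mathbf v_k)-\chi_i(\mathbf v_k)\bigr)x_k=0 .
\]
By Lemma~\ref{lemma:character-basis} this linear form is nonzero: $\mathbf w_i\ne\mathbf w_j$, and their $0$-th coordinates agree, so they differ in some coordinate $k\ge1$. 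Hence the equation cuts out a hyperplane in $x$-space. Its image under the diffeomorphism $x\mapsto g=e^x$ is a null set in $g$-space, so the pairwise intersections have measure zero.

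The main subtlety is that Theorem~\ref{thm:lattice-count} already counts lattice points in the union $\mathcal G$, so no lattice-point overlap correction is needed; only the volume identity is required. A secondary point is that the error term's implied constant depends on $R_\ell$ and $n$, which is consistent with the statement. I expect the measure-zero overlap argument to be the only genuine content, and it reduces directly to the character orthogonality already established.
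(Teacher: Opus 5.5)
Your proposal is correct and follows the paper's proof, which likewise combines Theorem~\ref{thm:lattice-count} with Proposition~\ref{prop:volume-G-sigma} and counts the $\ell!$ permutations. Your extra check is the additivity of volumes that the paper uses without comment: distinct $\mathcal G_\sigma$ meet only where some $D_i=D_j$, which by Lemma~\ref{lemma:character-basis} is a hyperplane in logarithmic coordinates and hence a null set.
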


\begin{proof}
    Since there are a total of $\ell!$ permutations of $\{1, 2, 3, \dots, \ell\}$, the result follows from Proposition \ref{prop:volume-G-sigma} and Theorem \ref{thm:lattice-count}.
\end{proof}

\subsection{Sieving}

In this section we incorporate the congruence conditions necessary to ensure that
$(g_1,\dots,g_\ell)\in \mathbb Z^\ell$,
corresponds to a multiquadratic extension of the desired type.
The lattice–point estimate of Theorem~\ref{thm:lattice-count}
counts all integer points 
$\mathcal G_{\mathbb Z}(Y;R_2,\dots,R_\ell)$.
We now restrict to those $\ell$–tuples that are
\emph{strongly carefree}, and impose additional congruence
conditions modulo $4$. The condition of being strongly carefree consists of two requirements:

\begin{enumerate}
\item[(i)] Each $g_i$ is squarefree.
\item[(ii)] For every prime $p$, at most one of the $g_i$ is divisible by $p$.
\end{enumerate}
Let $z=(z_1, \dots, z_\ell)\in (\Z/4\Z)^\ell$ be such that $z_i z_j\neq 0$ for all $i, j$. 

These conditions are local at primes.
Fix a parameter $T\geq 2$, and define
\[
m=m(T):=\prod_{p<T} p^2.
\]
\begin{definition}
An $\ell$–tuple $(g_1,\dots,g_\ell)\in \mathbb Z^\ell$
is said to be \emph{$z$-strongly carefree with respect to $m$}
if for every prime $p<T$:
\begin{enumerate}
\item[(i)] $p^2\nmid g_i$ for all $i$,
\item[(ii)] at most one $g_i$ is divisible by $p$,
\item[(iii)] $(g_1, \dots, g_\ell)$ reduces to $(z_1, \dots, z_\ell)\pmod{4}$. 
\end{enumerate}
\end{definition}
\noindent Let $\mathcal L(T)\subset \mathbb Z^\ell$ denote the set of such tuples.

\medskip

For a fixed prime $p$, we describe the allowed residue classes modulo $p^2$.
When $p$ is odd we let $\mathcal C_p\subset (\mathbb Z/p^2\mathbb Z)^\ell$ be the set of tuples satisfying:
\begin{enumerate}
\item[(i)] $g_i\not\equiv 0\pmod{p^2}$ for all $i$,
\item[(ii)] at most one coordinate is $0\pmod p$.
\end{enumerate}
\noindent On the other hand when $p=2$, let $\mathcal{C}_2$ consist of $g=(g_1, \dots, g_\ell)$ which reduce to $z=(z_1, \dots, z_\ell)$ modulo $4$.

\begin{lemma}\label{lemma:Cp-general}
For every odd prime number $p$, we have
\[
\#\mathcal C_p
=
p^{\ell-1}(p-1)^\ell (p+\ell).
\]
\end{lemma}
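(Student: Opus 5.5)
We count $\mathcal C_p$ by splitting according to how many coordinates are divisible by $p$. Condition (ii) says this number is $0$ or $1$, so $\mathcal C_p$ is the disjoint union of two sets. $\mathcal C_p^{(0)}$ consists of tuples with no coordinate divisible by $p$. $\mathcal C_p^{(1)}$ consists of tuples with exactly one coordinate divisible by $p$ but, by condition (i), not by $p^2$. Every tuple in either set satisfies (i) automatically, except for the distinguished coordinate in the second set, where (i) is an extra constraint.

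First, count the residues modulo $p^2$ of each type. There are $p^2-p=p(p-1)$ residues not divisible by $p$. There are $p-1$ residues divisible by $p$ but not by $p^2$, namely $p,2p,\dots,(p-1)p$. Hence
\[
\#\mathcal C_p^{(0)}=\bigl(p(p-1)\bigr)^{\ell}=p^{\ell}(p-1)^{\ell}.
\]
For $\mathcal C_p^{(1)}$, choose which coordinate is divisible by $p$ in $\ell$ ways. That coordinate takes one of $p-1$ values, and the remaining $\ell-1$ coordinates each take one of $p(p-1)$ values. This gives
\[
\#\mathcal C_p^{(1)}=\ell\,(p-1)\,\bigl(p(p-1)\bigr)^{\ell-1}=\ell\,p^{\ell-1}(p-1)^{\ell}.
\]

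Adding the two counts and factoring out $p^{\ell-1}(p-1)^\ell$ gives $p^{\ell-1}(p-1)^\ell(p+\ell)$, as claimed.

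There is no real obstacle. The only point needing care is the reading of the definition: condition (i) must forbid $g_i\equiv 0\pmod{p^2}$ only, so that the distinguished coordinate has exactly $p-1$ admissible classes and not $p$. One should also confirm that the two cases are disjoint and exhaust $\mathcal C_p$, which is immediate from (ii). As a sanity check, the density $\#\mathcal C_p/p^{2\ell}=(1-1/p)^\ell(1+\ell/p)$ matches the Euler factor in $C_\ell$ of Theorem~\ref{thm:main-intro}.
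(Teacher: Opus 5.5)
Your proposal is correct and follows the paper's proof exactly: you split $\mathcal C_p$ into tuples with no coordinate divisible by $p$, giving $p^\ell(p-1)^\ell$, and tuples with exactly one such coordinate (with $p-1$ admissible classes), giving $\ell\,p^{\ell-1}(p-1)^\ell$. The sum is the claimed formula. Your reading of condition (i) and your density check against the Euler factor are also right.
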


\begin{proof}
We count directly the tuples satisfying the defining condition, namely that at most one coordinate is divisible by $p$.
\par First consider those tuples for which no coordinate is divisible by $p$. For each coordinate there are $p^2-p = p(p-1)$ such choices. Hence the number of such tuples equals
\[
(p^2-p)^\ell = p^\ell (p-1)^\ell.
\]
\par Next we count the tuples for which exactly one coordinate is divisible by $p$. Fix an index $i$. For the $i$-th coordinate there are $p-1$ elements divisible by $p$ but not by $p^2$, while each of the remaining $\ell-1$ coordinates must be prime to $p$, giving $p^2-p$ choices for each. Thus for fixed $i$ the number of such tuples is
\[
(p-1)(p^2-p)^{\ell-1} = p^{\ell-1}(p-1)^\ell.
\]
Since $i$ may be chosen in $\ell$ ways, the total number of tuples with exactly one coordinate divisible by $p$ is
\[
\ell\, p^{\ell-1}(p-1)^\ell.
\]
\noindent Adding the two contributions gives
\[
p^\ell (p-1)^\ell + \ell\, p^{\ell-1}(p-1)^\ell
= p^{\ell-1}(p-1)^\ell (p+\ell),
\]
which proves the result.
\end{proof}
\begin{definition}
The $p$–adic density of strongly carefree tuples is
\[
\mu_p:=\frac{\#\mathcal C_p}{p^{2\ell}}
=
\begin{cases}
p^{-(\ell+1)}(p-1)^\ell(p+\ell)&\text{ if }p>2;\\
\frac{1}{4^\ell}&\text{ if }p=2.
\end{cases}
\]
\end{definition}
\noindent A straightforward expansion using
\(
(p-1)^\ell
=
p^\ell\!\left(1-\frac{\ell}{p}+O(p^{-2})\right)
\)
shows that
\[
\mu_p
=
1-\frac{\ell^2}{p^2}
+
O\!\left(\frac1{p^3}\right),
\]
and in particular
\[
1-\mu_p \ll \frac1{p^2}.
\]For a parameter $T\ge 2$, define
\[
m_T := \prod_{p<T} p^2
\quad\text{and}\quad
\mu_T := \prod_{p<T}\mu_p,
\]
and
\[
\mathcal R_T(Y)
:=
\mathcal G_{\mathbb Z}(Y;R_2,\dots,R_\ell)
\cap
\mathcal L(T),
\]
where $\mathcal L(T)$ denotes the set of integer vectors satisfying the local conditions defining $\mathcal C_p$ for all primes $p<T$.

\begin{lemma}\label{lem:finite-sieve-general}
We have
\[
\#\mathcal R_T(Y)
=
\mu_T\,
\operatorname{Vol}\!\bigl(
\mathcal G(Y;R_2,\dots,R_\ell)
\bigr)
+
O\!\left(
Y^{1-\frac{1}{\ell}}
\right).
\]
\end{lemma}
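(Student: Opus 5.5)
The plan is to split the integer points of $\mathcal G_{\mathbb Z}(Y;R_2,\dots,R_\ell)$ into residue classes modulo $m_T=\prod_{p<T}p^2$ (with the convention that the factor at $p=2$ is $4$, so the $z$-condition modulo $4$ is encoded) and to count each class by Davenport's lemma. By the Chinese remainder theorem, membership in $\mathcal L(T)$ depends only on $g\bmod m_T$, and the admissible set of residues is $\prod_{p<T}\mathcal C_p$. This set has cardinality $\prod_{p<T}\#\mathcal C_p=\mu_T\, m_T^{\ell}$, using Lemma~\ref{lemma:Cp-general} for odd $p$ and $\#\mathcal C_2=1=4^\ell\mu_2$. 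Hence
\[
\#\mathcal R_T(Y)=\sum_{r\in\prod_{p<T}\mathcal C_p}\#\bigl\{g\in\mathcal G_{\mathbb Z}(Y;R_2,\dots,R_\ell): g\equiv r \pmod{m_T}\bigr\}.
\]

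For a fixed residue $r$, write $g=r+m_T h$ with $h\in\mathbb Z^\ell$. The points $g$ counted in this class correspond bijectively to the lattice points $h$ in the affinely transformed region $m_T^{-1}(\mathcal G(Y;R_2,\dots,R_\ell)-r)$. This region has volume $m_T^{-\ell}\operatorname{Vol}(\mathcal G)$. Its coordinate projections are the projections of $\mathcal G$ scaled by $m_T^{-1}$, so their $k$-dimensional volumes are at most $m_T^{-k}V_k\le V_k$. The definability and bounded-complexity hypotheses of Davenport's lemma hold uniformly, since the region is cut out by a bounded number of polynomial inequalities of bounded degree in the $g_i$; this is the same input used in Theorem~\ref{thm:lattice-count}. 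Applying Davenport's lemma together with the projection bounds $V_k\ll Y^{k/\ell}$ established in the proof of Theorem~\ref{thm:lattice-count}, via $g_j\ll Y^{1/\ell}$ in \eqref{g_i bound}, each class contributes
\[
m_T^{-\ell}\operatorname{Vol}(\mathcal G)+O\bigl(Y^{(\ell-1)/\ell}\bigr).
\]

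Summing over the $\mu_T m_T^\ell$ admissible residues produces the main term $\mu_T\operatorname{Vol}(\mathcal G(Y;R_2,\dots,R_\ell))$. The accompanying error is $O(\mu_T m_T^\ell Y^{1-1/\ell})$, and since $T$ is fixed, the factor $m_T^\ell$ is a constant absorbed into the implied constant. The main point needing care is exactly this point: the error term is uniform in $Y$ but \emph{not} in $T$, because the implied constant grows like $m_T^{\ell}$. I would state this dependence explicitly, so that the later passage $T\to\infty$ in the infinite sieve is handled by letting $T$ tend to infinity only after $Y\to\infty$, or slowly with $Y$, together with a separate tail estimate. Alternatively, one could apply Davenport to the projections and bound the lower-dimensional terms by $\sum_{k<\ell}m_T^{\ell-k}V_k$, which gives the sharper error $O_T(Y^{1-1/\ell})$ with an explicit dependence $m_T\,Y^{(\ell-1)/\ell}$. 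I would record this form as well, since it is what the tail argument will need.
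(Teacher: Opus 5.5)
Your proposal is correct and is essentially the paper's proof. Use the Chinese remainder theorem to split $\mathcal L(T)$ into the $\prod_{p<T}\#\mathcal C_p=\mu_T m_T^{\ell}$ cosets of $m_T\Z^\ell$, count each coset inside $\mathcal G(Y;R_2,\dots,R_\ell)$ by Davenport's lemma with an error $O(Y^{1-1/\ell})$ uniform in the residue (the paper phrases this as applying Theorem~\ref{thm:lattice-count} to the lattice $m_T\Z^\ell$), and sum. Your remark that the implied constant depends on $T$ through the $\mu_T m_T^\ell$ classes is accurate; the paper leaves it implicit, and it is harmless there because $T$ is held fixed while $Y\to\infty$ in Theorem~\ref{thm:infinite-sieve-general}. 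The closing bound $m_T\,Y^{(\ell-1)/\ell}$ in your optional refinement needs $m_T\ll Y^{1/\ell}$, since in general the summed Davenport errors give $\ll m_T\,Y^{(\ell-1)/\ell}+m_T^{\ell}$.
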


\begin{proof}
By the Chinese remainder theorem, the local conditions modulo $p^2$
for distinct primes are independent. Hence, if
\[
\mathcal C_{m_T}
:=
\{x \bmod m_T : x \bmod p^2 \in \mathcal C_p
\text{ for all } p<T\},
\]
then
\[
\#\mathcal C_{m_T}
=
\prod_{p<T} \#\mathcal C_p.
\]
Moreover,
\[
\mathcal L(T)
=
\bigcup_{x\in\mathcal C_{m_T}}
(x+m_T\mathbb Z^\ell),
\]
and the union is disjoint.

Let
\[
\mathcal G(Y)
:=
\mathcal G(Y;R_2,\dots,R_\ell).
\]
For each residue class $x\in\mathcal C_{m_T}$, we apply
Theorem~\ref{thm:lattice-count} to the lattice $m_T\mathbb Z^\ell$.
Since this lattice has covolume $m_T^\ell$, we obtain
\[
\#\bigl((x+m_T\mathbb Z^\ell)\cap\mathcal G(Y)\bigr)
=
\frac{1}{m_T^\ell}
\operatorname{Vol}(\mathcal G(Y))
+
O\!\left(
Y^{1-\frac1\ell}
\right),
\]
uniformly in $x$.

Summing over all $x\in\mathcal C_{m_T}$ yields
\[
\#\mathcal R_T(Y)
=
\frac{\#\mathcal C_{m_T}}{m_T^\ell}
\operatorname{Vol}(\mathcal G(Y))
+
O\!\left(
Y^{1-\frac1\ell}
\right).
\]
Finally, by multiplicativity,
\[
\frac{\#\mathcal C_{m_T}}{m_T^\ell}
=
\prod_{p<T}
\frac{\#\mathcal C_p}{p^{2\ell}}
=
\prod_{p<T}\mu_p
=
\mu_T,
\]
which completes the proof.
\end{proof}

\begin{theorem}\label{thm:infinite-sieve-general}
Let $\mathcal L_\infty$
denote the set of nondegenerate strongly carefree tuples and set $\mathcal G(Y):=\mathcal G(Y; R_2, \dots, R_\ell)$ and $\mathcal{R}(Y):=\mathcal G(Y)\cap\mathcal L_\infty$. Then
\[
\lim_{Y\rightarrow \infty}\left(\frac{\#\mathcal{R}(Y)}{\operatorname{Vol}\!\bigl(\mathcal G(Y)\bigr)}\right)
=\prod_p \mu_p.\]
\end{theorem}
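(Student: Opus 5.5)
We will prove the limit by the standard squeeze: an upper bound from the finite sieve of Lemma~\ref{lem:finite-sieve-general}, and a lower bound obtained by controlling the tuples that satisfy the conditions at all primes $p<T$ but fail them at some prime $p\ge T$. Throughout we write $V(Y):=\operatorname{Vol}(\mathcal G(Y))=c_\ell\ell!\,F\,Y$ (Proposition~\ref{prop:volume-G-sigma}). We also use the box bound \eqref{g_i bound}: every coordinate satisfies $g_j\le BY^{1/\ell}$, with $B$ depending only on $n$ and $R_\ell$.

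\emph{Nondegeneracy.} We first remove nondegeneracy from the count. Tuples with some $g_i=1$ lie on a coordinate hyperplane section of $\mathcal G(Y)$. By \eqref{g_i bound}, there are $O(Y^{(\ell-1)/\ell})=o(Y)$ of them.

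\emph{Upper bound.} Since $\mathcal R(Y)\subseteq\mathcal R_T(Y)$ for every $T$, Lemma~\ref{lem:finite-sieve-general} gives
\[
\limsup_{Y\to\infty}\frac{\#\mathcal R(Y)}{V(Y)}\le\mu_T .
\]
Letting $T\to\infty$, and using that $\prod_p\mu_p$ converges absolutely because $1-\mu_p\ll p^{-2}$, yields the upper bound $\prod_p\mu_p$.

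\emph{Lower bound.} We use $\#\mathcal R(Y)\ge \#\mathcal R_T(Y)-\sum_{p\ge T}E_p(Y)$. Here $E_p(Y)$ counts tuples in $\mathcal G_{\mathbb Z}(Y)$ that violate the local condition at $p$, that is, either $p^2\mid g_i$ for some $i$, or $p\mid g_i$ and $p\mid g_j$ for some $i\ne j$. This is the step where the error in \cite{Hassan} occurred: one must not apply Theorem~\ref{thm:lattice-count} with an implied constant uniform in $p$. Instead we bound $E_p$ crudely by counting lattice points in the box $[1,BY^{1/\ell}]^\ell$ together with the constraint $\prod g_i<Y$.

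\emph{Case $p^2\mid g_i$.} Write $g_i=p^2h$. The remaining product constraint is $h\prod_{j\ne i}g_j<Y/p^2$. The number of positive integer tuples with product $<X$ is $\ll X(\log X)^{\ell-1}$, so this case contributes $\ll Y(\log Y)^{\ell-1}/p^2$. This is too lossy because of the log factor. It is better to exploit the box: each $g_j\le BY^{1/\ell}$ forces $p\le \sqrt{B}\,Y^{1/(2\ell)}$, and the count is $\le \ell\,(BY^{1/\ell})^{\ell-1}\cdot (BY^{1/\ell}/p^2+1)\ll Y/p^2+Y^{(\ell-1)/\ell}$.

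\emph{Case $p\mid g_i,\ p\mid g_j$.} Similarly, using only the box bound, the count is $\ll (Y^{1/\ell}/p+1)^2\,Y^{(\ell-2)/\ell}\ll Y/p^2+Y^{(\ell-1)/\ell}/p+Y^{(\ell-2)/\ell}$. Here $p\le BY^{1/\ell}$.

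\emph{Summing over $p$.} Summing over $T\le p\ll Y^{1/\ell}$ gives
\[
\sum_{p\ge T}E_p(Y)\ll \frac{Y}{T}+Y^{(\ell-1)/\ell}\log\log Y+Y^{(\ell-1)/\ell}+Y^{(\ell-1)/\ell}.
\]
The constant term $+1$ summed over $\pi(Y^{1/\ell})$ primes contributes $Y^{(\ell-1)/\ell}$, and in the first case only $p\ll Y^{1/(2\ell)}$ occur. Hence $\sum_{p\ge T}E_p(Y)=O(Y/T)+o(Y)$. Combining with Lemma~\ref{lem:finite-sieve-general},
\[
\liminf_{Y\to\infty}\frac{\#\mathcal R(Y)}{V(Y)}\ge \mu_T-O(1/T).
\]
Letting $T\to\infty$ gives the matching lower bound.

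The main obstacle is the uniform tail estimate. The key point is to replace Davenport's lemma for the large modulus $p^2$ with elementary box counting, whose constants are independent of $p$. The box bound \eqref{g_i bound} simultaneously truncates the range of relevant primes and keeps the error terms at $o(Y)$.
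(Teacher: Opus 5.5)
Your proof is correct and has the same skeleton as the paper's:
\begin{itemize}
\item the upper bound comes from $\mathcal R(Y)\subseteq\mathcal R_T(Y)$ and Lemma~\ref{lem:finite-sieve-general};
\item the lower bound subtracts the tuples that fail the local condition at some $p\ge T$;
\item the box bound \eqref{g_i bound} truncates the range of primes;
\item finally $T\to\infty$.
\end{itemize}

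The real difference is how each $E_p(Y)$ is bounded. The paper applies Theorem~\ref{thm:lattice-count} to the index-$p^2$ sublattice $\{p\mid g_i,\ p\mid g_j\}$. This gives $\operatorname{Vol}(\mathcal G(Y))/p^2+O(Y^{1-1/\ell})$ uniformly in $p$. The box bound is used only to restrict to $p\le CY^{1/\ell}$, so the error terms total $O(Y/\log Y)$. That uniformity is legitimate. After rescaling the two coordinates by $1/p$, Davenport's error is controlled by coordinate projections that can only shrink. Your remark that one ``must not'' apply the lattice count uniformly in $p$ is therefore too strong. The fix to \cite{Hassan} is the truncation of the prime range combined with this uniformity.

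Your route drops Davenport's lemma and instead counts multiples of $p$ or $p^2$ in the box $[1,BY^{1/\ell}]^\ell$. This gives $E_p\ll Y/p^2$ with a constant ($\ell B^\ell$) that is visibly independent of $p$. It is more elementary and self-contained. The cost is a cruder constant, since you use the box volume rather than $\operatorname{Vol}(\mathcal G(Y))$. That is harmless, because only $\sum_{p\ge T}p^{-2}\to 0$ matters.

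You also treat $p^2\mid g_i$ as its own case. The paper's reduction to ``$p\mid g_i$ and $p\mid g_j$'' does not literally cover that case, so this is a small improvement.

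One bookkeeping slip: the $+1$'s are unnecessary, since $[1,X]$ contains at most $X/d$ multiples of $d$. As written, the $+1$ in the $p^2\mid g_i$ case sums over $p\ll Y^{1/(2\ell)}$ to $\ll Y^{1-1/(2\ell)}/\log Y$, not to $Y^{(\ell-1)/\ell}$ as in your display. This is still $o(Y)$, so the conclusion is unaffected.

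Like the paper, you tacitly read $\mathcal L_\infty$ as including the mod-$4$ condition $g\equiv z$. That reading is needed both for the inclusion $\mathcal R(Y)\subseteq\mathcal R_T(Y)$ and for the factor $\mu_2=4^{-\ell}$.
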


\begin{proof}
We shall denote by $\mathcal L_\infty'$ the set of strongly carefree tuples. Note that the complement $\mathcal L_\infty'\setminus \mathcal L_\infty$ consists of strongly carefree tuples for which at least one coordinate $g_i=1$. We have
\[
\mu_p
=
1-\frac{\ell^2}{p^2}
+O(p^{-3}),
\]and thus the product $\prod_p\mu_p$ converges absolutely to a positive constant.
\par For $T\ge2$, recall that $m_T=\prod_{p<T}p^2$ and 
\[\mathcal L(T)
=
\{g\in\mathbb Z^\ell :
g\bmod p^2\in\mathcal C_p
\text{ for all } p<T\}.
\]
Then by Lemma~\ref{lem:finite-sieve-general},
\[
\#\bigl(\mathcal G(Y)\cap\mathcal L(T)\bigr)
=
\prod_{p<T} \mu_p\operatorname{Vol}(\mathcal G(Y))
+
O\!\left(Y^{1-\frac1\ell}\right).
\]Let
\[
\mathcal W_{>T}
=
\bigcup_{p\ge T}
\{g\in\mathbb Z^\ell :
g\bmod p^2\notin\mathcal C_p\}.
\]
Then
\[
\mathcal G(Y)\cap\mathcal L_\infty'
=
\bigl(\mathcal G(Y)\cap\mathcal L(T)\bigr)
\setminus
\bigl(\mathcal G(Y)\cap\mathcal W_{>T}\bigr).
\]
\noindent If $g\in\mathcal W_{>T}$,
then for some prime $p\ge T$, there exist indices $i,j$ such that $p^2 |g_i g_j$. Note that if $p^2|g_i$ then we can choose any other value of $j$. From \eqref{g_i bound}, for $(g_1, \dots, g_\ell)\in \mathcal G(Y)$, each coordinate satisfies $|g_i|\ll Y^{1/\ell}$. Then $p^2\ll Y^{2/\ell}$ and thus $p<CY^{1/\ell}$ for some constant $C>0$ which depends on $\ell$. 
Fix distinct indices $i\neq j$.
For a prime $p$ in the above range,
the conditions $p\mid g_i$ and $p\mid g_j$
define a sublattice of index $p^2$.
Applying Theorem~\ref{thm:lattice-count},
uniformly for $p\le CY^{1/\ell}$,
gives
\[
\#\{g\in\mathcal G(Y): p\mid g_i,\; p\mid g_j\}
\ll
\frac{1}{p^2}
\operatorname{Vol}(\mathcal G(Y))
+
O\!\left(Y^{1-\frac1\ell}\right).
\]
\noindent By the prime number theorem, the number of primes $p<C Y^{1/\ell}$ is $\ll \frac{Y^{1/\ell}}{\log Y}$. Summing over primes $T\le p\le CY^{1/\ell}$, we find that:
\[
\sum_{T\le p\le CY^{1/\ell}}
\#\{g\in\mathcal G(Y): p\mid g_i,\; p\mid g_j\}
\ll
\operatorname{Vol}(\mathcal G(Y))
\sum_{p\ge T}\frac1{p^2}
+
O\!\left(
\frac{Y}{\log Y}
\right).
\]
Summing over the $\binom{\ell}{2}$ pairs $(i,j)$ gives
\[
\#\bigl(\mathcal G(Y)\cap\mathcal W_{>T}\bigr)
\ll
\operatorname{Vol}(\mathcal G(Y))\sum_{p\ge T}\frac1{p^2}
+
o\!\bigl(\operatorname{Vol}(\mathcal G(Y))\bigr).
\]
Thus we have shown that 
\begin{equation}\label{liminf eqn}
\#\bigl(\mathcal G(Y)\cap\mathcal L_\infty'\bigr)
=
\prod_{p<T} \mu_p\operatorname{Vol}(\mathcal G(Y))
+
O\!\left(\sum_{p\geq T} \frac{1}{p^2}\operatorname{Vol}(\mathcal G(Y)) 
\right)
+
o\!\bigl(\operatorname{Vol}(\mathcal G(Y))\bigr).
\end{equation}
We have that:
\[
\#(\mathcal G(Y)\cap\mathcal L_\infty')
\le
\#(\mathcal G(Y)\cap\mathcal L(T)),
\]
and
\[
\#(\mathcal G(Y)\cap\mathcal L_\infty')
\ge
\#(\mathcal G(Y)\cap\mathcal L(T))
-
\#(\mathcal G(Y)\cap\mathcal W_{>T}).
\]

Dividing through by $\op{Vol}(\mathcal G(Y))$ and using
Lemma~\ref{lem:finite-sieve-general}, we obtain
\[
\frac{\#(\mathcal G(Y)\cap\mathcal L(T))}
{\op{Vol}(\mathcal G(Y))}
=
\prod_{p<T}\mu_p
+
O\!\left(Y^{-1/\ell}\right).
\]
Taking the $\limsup$ as $Y\to\infty$ in the upper bound gives
\[
\limsup_{Y\to\infty}
\frac{\#(\mathcal G(Y)\cap\mathcal L_\infty')}
{\op{Vol}(\mathcal G(Y))}
\le
\prod_{p<T}\mu_p .
\]
From \eqref{liminf eqn} we find that
\[
\liminf_{Y\to\infty}
\frac{\#(\mathcal G(Y)\cap\mathcal L_\infty')}
{\op{Vol}(\mathcal G(Y))}
\ge
\prod_{p<T}\mu_p
-
O\!\left(\sum_{p\geq T} \frac{1}{p^2}\right).
\]
Letting $T\to\infty$ in the previous two inequalities therefore yields
\[
\limsup_{Y\to\infty}
\frac{\#(\mathcal G(Y)\cap\mathcal L_\infty')}
{\op{Vol}(\mathcal G(Y))}
\le
\prod_p\mu_p
\]
and
\[
\liminf_{Y\to\infty}
\frac{\#(\mathcal G(Y)\cap\mathcal L_\infty')}
{\op{Vol}(\mathcal G(Y))}
\ge
\prod_p\mu_p .
\]
\noindent Thus the $\limsup$ and $\liminf$ coincide, and we find that
\[
\lim_{Y\to\infty}
\frac{\#(\mathcal G(Y)\cap\mathcal L_\infty')}
{\op{Vol}(\mathcal G(Y))}
=
\prod_p\mu_p.
\]
The set of degenerate tuples is clearly a sparse set and thus 
\[
\lim_{Y\to\infty}
\frac{\#(\mathcal G(Y)\cap\mathcal L_\infty)}
{\op{Vol}(\mathcal G(Y))}
=
\prod_p\mu_p.
\]
\end{proof}

\begin{corollary}\label{last cor}
    We find that 
    \[\#\mathcal{R}(Y)\sim \frac{1}{4^\ell}\times \prod_{p>2} \left(1-\frac{1}{p}\right)^\ell\left(1+\frac{\ell}{p}\right) c_\ell\ell!\,F(R_2,\dots,R_\ell) Y.\]
\end{corollary}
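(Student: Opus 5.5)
The corollary should follow by combining Theorem~\ref{thm:infinite-sieve-general} with the volume computation. Theorem~\ref{thm:infinite-sieve-general} gives
\[
\#\mathcal R(Y)\sim \Bigl(\prod_p\mu_p\Bigr)\operatorname{Vol}\bigl(\mathcal G(Y)\bigr),
\]
so two things remain: compute $\operatorname{Vol}(\mathcal G(Y))$, and evaluate $\prod_p\mu_p$.

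For the volume, write $\mathcal G(Y)=\bigcup_{\sigma\in S_\ell}\mathcal G_\sigma(Y;R_2,\dots,R_\ell)$. I will check that the pieces overlap only in measure zero. Two regions $\mathcal G_\sigma$ and $\mathcal G_{\sigma'}$ with $\sigma\neq\sigma'$ can only meet where $D_{\sigma(i)}=D_{\sigma(j)}$ for some $i\neq j$. In the logarithmic coordinates $x_i=\log g_i$ this is the condition
\[
\langle \mathbf w_{\sigma(i)}-\mathbf w_{\sigma(j)},\,\cdot\,\rangle=0,
\]
a linear hyperplane, and it is a proper hyperplane by Lemma~\ref{lemma:character-basis}. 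Hence
\[
\operatorname{Vol}(\mathcal G(Y))=\sum_\sigma\operatorname{Vol}(\mathcal G_\sigma)=\ell!\,c_\ell\,Y\,F(R_2,\dots,R_\ell)
\]
by Proposition~\ref{prop:volume-G-sigma}, which also shows each volume is independent of $\sigma$. This agrees with the preceding corollary.

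For the Euler product, I separate $p=2$ from the odd primes. At $p=2$ the local factor is $\mu_2=4^{-\ell}$: the sieve condition at $2$ is that $g$ reduces modulo $4$ to the single fixed class $z\in(\Z/4\Z)^\ell$, and this condition is what forces all $D_j\equiv1\pmod 4$, i.e. Case~1. For odd $p$, Lemma~\ref{lemma:Cp-general} gives
\[
\mu_p=p^{-(\ell+1)}(p-1)^\ell(p+\ell)=\Bigl(1-\frac1p\Bigr)^\ell\Bigl(1+\frac{\ell}{p}\Bigr).
\]
Substituting both into the asymptotic from Theorem~\ref{thm:infinite-sieve-general} gives exactly the stated formula.

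The main point to watch is that the sieve at $2$ really imposes one residue class modulo $4$ with density $4^{-\ell}$. This should be covered already: Lemma~\ref{lem:finite-sieve-general} includes $p=2$, since $2<T$ whenever $T>2$, and it is applied with the covolume-$4^\ell$ condition. I also need the limit in Theorem~\ref{thm:infinite-sieve-general} to hold with $\mathcal L_\infty$ carrying the fixed mod-$4$ condition. The tail argument there only involves primes $p\ge T>2$, so it is unaffected by the condition at $2$.
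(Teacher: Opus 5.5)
Your proposal is correct and follows the paper's route. You combine Theorem~\ref{thm:infinite-sieve-general} with Proposition~\ref{prop:volume-G-sigma} summed over the $\ell!$ pieces $\mathcal G_\sigma$, and your measure-zero overlap check, the evaluation $\mu_2=4^{-\ell}$ and $\mu_p=(1-\frac1p)^\ell(1+\frac{\ell}{p})$ for odd $p$, and the remark that $\mathcal L_\infty$ must carry the fixed mod-$4$ condition make explicit what the paper's one-line proof leaves implicit; one small caveat is that the fixed class $z$ forces Case~1 only when $z$ is one of the $\omega_1$ Case-1 classes, but the corollary holds for any admissible $z$ and does not need this.
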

\begin{proof}
    The result follows directly from Theorem \ref{thm:infinite-sieve-general} and Proposition \ref{prop:volume-G-sigma}.
\end{proof}
\subsection{Main theorem}

We now deduce a counting statement for totally real multiquadratic fields
with prescribed shape parameters.

Let $\mathcal{R}(Y)$ denote the set of strongly carefree tuples
\[
\mathcal{R}(Y)=\mathcal{G}(Y)\cap\mathcal{L}_\infty .
\]
By Corollary \ref{last cor} we have
\[
\#\mathcal{R}(Y)\sim
\frac{1}{4^\ell}
\Bigl(\prod_{p>2}\left(1-\frac1p\right)^\ell
\left(1+\frac{\ell}{p}\right)\Bigr)
c_\ell\ell !\,F(R_2,\dots,R_\ell)\,Y .
\]
Suppose that $2$ is unramified in $K_n$. Then recall that the discriminant of $K_n$ is given by
\[
\Delta_{K_n}=\prod_{j=1}^{\ell}D_j.
\]

\begin{definition}\label{defn of omega}
    Let $\omega_1$ denote the number of residue classes of
$(g_1,\dots,g_\ell)\pmod{4}$ for which the associated tuple
$(a_1,\dots,a_n)$ falls into case $1$.
\end{definition}

\begin{theorem}
Fix shape parameters $(R_2,\dots,R_\ell)$ with
$1\le R_2\le R_3\le \dots \le R_\ell$. Let $\mathcal{F}(X)$ denote the
set of totally real multiquadratic extensions $K_n/\mathbb{Q}$ of degree
$2^n$ such that
\begin{enumerate}
\item $2$ is unramified in $K_n$,
\item $\Delta_{K_n}\le X$,
\item there exists a permutation $\sigma$ of $\{2,\dots,\ell\}$ for which
\[
R_j\le \lambda_{\sigma(j)} \le R_{j+1} \quad \text{for} \quad 2\le j\le \ell-1 
\quad\text{and}\quad
\lambda_{\sigma(\ell)}\le R_\ell .
\]
\end{enumerate}
Then
\[
\#\mathcal{F}(X)\sim
C_\ell\,F(R_2,\dots,R_\ell)\,X^{1/2^{n-1}},
\]
where
\[
C_\ell=
\frac{\omega_1 c_\ell\ell !}{4^\ell\,\#\mathrm{GL}_n(\mathbb{F}_2)}
\prod_{p>2}
\left(1-\frac1p\right)^\ell
\left(1+\frac{\ell}{p}\right).
\]
\end{theorem}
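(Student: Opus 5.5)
The plan is to transfer the count of tuples in Corollary~\ref{last cor} to a count of fields through the parametrization $(g_1,\dots,g_\ell)\mapsto K_n=\Q(\sqrt{a_1},\dots,\sqrt{a_n})$ of \eqref{eq2:D-param-general}. The first step is to show that every totally real multiquadratic $K_n$ of degree $2^n$ arises from a nondegenerate strongly carefree tuple. The tuple is recovered as $g_i=\gcd\{D_j:\mathbf v_i\cdot\mathbf v_j=1\}$. For any prime $p$ dividing $\Delta_{K_n}$, the set of $j$ with $p\mid D_j$ is the complement of a hyperplane in $\F_2^n$, namely $\{\mathbf v_j:\mathbf v_i\cdot \mathbf v_j=1\}$ for a unique nonzero $\mathbf v_i$. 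Hence $p$ divides exactly one $g_i$, and this gives squarefreeness and pairwise coprimality. Nondegeneracy ($g_i>1$) should fail only on a sparse set of fields, and it is built into $\mathcal L_\infty$ anyway.

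The second step is to understand the fibres of this map. A field $K_n$ determines its set of quadratic subfields $\{D_1,\dots,D_\ell\}$, but the labelling by $\F_2^n\setminus\{0\}$ depends on the choice of ordered generators $a_1,\dots,a_n$, i.e.\ on an ordered basis of the radicand group. Two tuples give the same field exactly when they differ by the induced action of $\GL_n(\F_2)$ on the indices, $\mathbf v_i\mapsto g\mathbf v_i$, which preserves the dot-product pattern after replacing $g$ by its inverse transpose. I will check that this action is free on nondegenerate tuples: a nontrivial $g$ moves some nonzero vector, and the corresponding $g_i$ are distinct and coprime, so the tuple is changed. Each field therefore has exactly $\#\GL_n(\F_2)$ preimages.

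The next step is to match the conditions in the two counts. Since $\prod_j D_j=(\prod_i g_i)^{2^{n-1}}$, the condition $\Delta_{K_n}\le X$ becomes $\prod g_i\le Y:=X^{1/2^{n-1}}$. The shape condition on the ordered $\lambda$'s is exactly the union over $\sigma$ of the regions $\mathcal G_\sigma$. The condition that $2$ is unramified is Case~(1), i.e.\ all $D_j\equiv 1\pmod 4$, which is a congruence condition on $(g_i)\bmod 4$. In the sieve of Theorem~\ref{thm:infinite-sieve-general} I will replace the single residue class $z$ at $p=2$ by the union of the $\omega_1$ admissible classes. Each class has density $4^{-\ell}$, so the $2$-adic factor becomes $\omega_1/4^\ell$. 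Combining this with Corollary~\ref{last cor} and dividing by $\#\GL_n(\F_2)$ gives the stated constant $C_\ell$ and the growth rate $X^{1/2^{n-1}}$.

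The hardest step is to justify that the $\GL_n(\F_2)$-action preserves the region $\mathcal G(Y;R_2,\dots,R_\ell)$ and the Case~(1) congruences, so that the division is exact. The region is defined symmetrically through the sorted multiset $\{D_j\}$, which is intrinsic to the field; this should make the first point clear. For the congruences, the mod-$4$ condition "all $D_j\equiv1$" is also intrinsic, so it is invariant. I also expect to need the detail that the sieve at $p=2$ combines correctly with odd $g_i$: Case~(1) forces every $g_i$ odd, which is compatible with the classes counted by $\omega_1$.
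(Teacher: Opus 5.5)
Your proposal is correct and follows the same route as the paper. You set $Y=X^{1/2^{n-1}}$, count Case~(1) tuples via Corollary~\ref{last cor} with $2$-adic factor $\omega_1/4^\ell$, and divide by $\#\GL_n(\F_2)$; you also justify steps the paper only asserts (recovering the $g_i$ from the linear functional $\mathbf v_j\mapsto v_p(D_j)\bmod 2$, freeness of the action on nondegenerate tuples, and invariance of the region and of the Case~(1) condition).

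Two small remarks, the first shared with the paper's own proof. The identification of the shape condition with $\bigcup_\sigma\mathcal G_\sigma$ holds for the region $W(R_2,\dots,R_\ell)$ of Theorem~\ref{thm:main-intro}, but not for the interlacing condition~(3) as literally written, whose log-volume is not $F(R_2,\dots,R_\ell)$ in general. Also, the degenerate tuples you set aside are only finitely many, since \eqref{gj-identity-final} with $1\le\lambda_k\le R_\ell$ forces $g_j^{\ell}\gg\prod_i g_i$ for every $j$.
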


\begin{proof}
We set $Y:=X^{1/2^{n-1}}$. Recall that a non-degenerate strongly carefree tuple $(g_1,\dots,g_\ell)$ determines squarefree integers $(D_1, \dots, D_\ell)$
from which we obtain a totally real multiquadratic field
\[
K_n=\mathbb{Q}(\sqrt{a_1},\dots,\sqrt{a_n}),
\]
where $a_i:=D_{2^{i-1}}$. This parametrization is not injective. Indeed, a different ordered generating set 
\[(a_1', \dots, a_n')=(D_{i_1}, \dots, D_{i_n})\]
for $K_n$ corresponds to choosing a different basis $(\mathbf e_1',\dots,\mathbf e_n')$
of $V$. Such a change of basis is given by multiplication by a matrix in $\mathrm{GL}_n(\mathbb{F}_2)$. 
\par Let $\mathcal{R}_1(Y)$ denote the set consisting
of tuples lying in case $(1)$. From Corollary \ref{last cor} we find that
\[
\#\mathcal{R}_1(Y)
\sim
\frac{\omega_1}{4^\ell}\,
c_\ell \ell!
\Bigl(\prod_{p>2}\left(1-\frac1p\right)^\ell
\left(1+\frac{\ell}{p}\right)\Bigr)
F(R_2,\dots,R_\ell)\,Y.
\]
\noindent Each field counted by $\mathcal{F}(X)$ arises from exactly
$\#\mathrm{GL}_n(\mathbb{F}_2)$ tuples in $\mathcal{R}_1(Y)$. Hence
\[
\#\mathcal{F}(X)
\sim
\frac{1}{\#\mathrm{GL}_n(\mathbb{F}_2)}
\#\mathcal{R}_1\!\left(X^{1/2^{n-1}}\right)
\]
and the result follows.
\end{proof}

\bibliographystyle{alpha}
\bibliography{references}

\end{document}